\documentclass[journal]{amsart}
\usepackage{amsmath,amsfonts,amsthm,bm}
\usepackage{fullpage}
\usepackage{algorithmic}
\usepackage{algorithm}
\usepackage{array}
\usepackage[caption=false,font=normalsize,labelfont=sf,textfont=sf]{subfig}
\usepackage{textcomp}
\usepackage{stfloats}
\usepackage{url}
\usepackage{verbatim}
\usepackage{graphicx}
\usepackage{cite}
\usepackage{epstopdf}
\usepackage{subcaption}
\usepackage{subfig}
\usepackage{pgfplots}
\usepackage{tikz}
\usetikzlibrary{shapes,calc,positioning,arrows}
\usepackage{hyperref}

\pgfarrowsdeclarecombine{twotriang}{twotriang}
{stealth'}{stealth'}{stealth'}{stealth'} 

\newcommand{\mybinom}[2]{\Bigl(\begin{array}{@{}c@{}}#1\\#2\end{array}\Bigr)}
\newcommand{\pr}{\mbox{\sf P}}
\newcommand{\ex}{{\mathbb{E}}}               

\newcommand{\cov}{\mbox{\sf Cov}}

\newcommand{\Real}{\mathbb R}

\newcommand{\bx}{{\bm x}}               

\newcommand{\g}{\lambda}                
   
\newcommand{\ra}{\rightarrow}           

\newtheorem{thm}{Theorem}
\newtheorem{lem}{Lemma}
\newtheorem{pro}{Proposition}
\newtheorem{cor}{Corollary}

\newtheorem{rem}{Remark}

\usepackage{color}

\begin{document}
	
	\title[Power Iteration Algorithm]{Analysis of Power Iteration Algorithm with Partially Observed Matrix-vector Products}
	
	\author[S. Ghosh {\it et al}]{Soumyadip Ghosh, Lior Horesh, Vassilis Kalantzis, Yingdong Lu,\\ Tomasz Nowicki, and Shashanka Ubaru\\\\IBM Research, Thomas J. Watson Research Center, Yorktown Heights, NY 10598, U.S.A.
	}

	\maketitle
	
	\begin{abstract}
		We consider the problem of computing the dominant eigenvector of a symmetric matrix via the power iteration algorithm subject to constraints in the computation of matrix-vector pr    ucts. In particular, we focus on scenarios where the entries of matrix-vector products with the input matrix are only partially observed. Such constraints frequently arise on cloud architectures implemented via the controller-worker model where the matrix-vector products are distributed across workers on remote servers. 
		Instead of a prolonged delay incurred by waiting for the slowest workers to return their output to the controller, a phenomenon known as straggling, a set of pre-determined values can replace the values of the delayed workers and allow the power iteration  to proceed to the next iteration. In this paper, we develop two algorithms whose expected approximation converges to the true dominant eigenvector. 
		The first algorithm relies on a probabilistic switch 
		between two different approaches to set the omitted entries: either set them to zero or to their previous recorded value. The second algorithm relies on averaging previously generated partial power iteration approximations obtained by ignoring a set of columns of the iteration matrix. several theoretical details are discussed while numerical experiments verify the effectiveness of the two proposed schemes and demonstrate their comparative performance advantage over current state-of-the-art. 
	\end{abstract}

	\section{Introduction}
	
	Power iteration is a numerical algorithm that computes the eigenvector and associated (simple) eigenvalue of 
	largest modulus of a $N\times N$ matrix $\mathbf{A}$ by applying a sequence of matrix-vector multiplications 
	between the matrix $\mathbf{A}$ and an initial approximation $\bm{x}_0 \in \mathbb{R}^N$ of the dominant eigenvector~\cite{saad2011numerical,parlett1998symmetric}. The numerical approximation at iteration $k\in 
	\mathbb{N}$ is expressed as $\bm{x}_k = \mathbf{A}\bm{x}_{k-1}/\|\mathbf{A}\bm{x}_{k-1}\|$ and each iteration enhances the alignment of $\bm{x}_k$ with the dominant eigenvector of the matrix $\mathbf{A}$. 
	The power iteration algorithm has maintained its appeal as a workhorse of numerical linear algebra over several decades due to its 
	simplicity, and has several practical applications, e.g., graph signal processing \cite{sandryhaila2013discrete,teke2018asynchronous}, subspace tracking \cite{badeau2005fast}, independent component analysis \cite{basiri2017alternative}, determining the eigenvector
	centrality of graphs~\cite{bonacich2007some}, computing the stationary distributions of Markov chains~\cite{nesterov2015finding}, trust assignment in P2P networks~\cite{jelasity2007asynchronous}, and importance ranking for hyperlinked web pages~\cite{langville2004deeper}. 
	Recent advances in randomized numerical linear algebra, e.g., \cite{martinsson2020randomized,halko2011finding}, have also revived the interest in the power iteration algorithm and its extensions~\cite{bose2019terapca,musco2015randomized,gu2015subspace,rokhlin2010randomized}. 
	
	In this paper, we focus on the application of the power iteration algorithm in scenarios where the
	matrix-vector products $\bm{x}_k =\mathbf{A}\bm{x}_{k-1}$ are only partially observed. In particular, 
	we consider that at any given iteration $k$ we can observe either: $a$) a subset of $1\leq T_k\leq N$ entries of the matrix-vector product $\mathbf{A}\bm{x}_{k-1}$ or $b$) an approximation of $\mathbf{A}\bm{x}_{k-1}$ formed by ignoring all but $T_k$ columns of the matrix $\mathbf{A}$. The value of $T_k$ as well as the corresponding index set of 
	observed entries/columns ${\mathcal T}_k\subseteq \{1,2,\ldots,N\}$ are modeled as random variables. 
	Setting “$a$)"represents an instance of inexact power iteration and has been analyzed in great length from an asynchronous perspective \cite{teke2018asynchronous,teke2019random,teke2020signals}. 
	Other types of inexact power iteration include partial matrix-vector products in quantum many-body problems \cite{lu2020full}, subspampling of the entries of $\mathbf{A}$ in adaptive power method \cite{shin2023adaptive}, and stochastic matrix-vector products \cite{xu2018accelerated}. The work 
		in \cite{11196482} focuses on non-uniform probability scenarios where each index of the approximate dominant eigenvector is updated according to an independent (and, possibly, time-varying) probability, and at any given iteration, the update decision is guided by a Bernoulli trial with success probability equal to the update probability.
	Finally, a detailed study on the behavior of inexact power iteration (subspace iteration) can be found in 
	\cite{saad2016analysis}.
	
	The study of power iteration with partially observed matrix-vector products (also referred to as 
	\emph{partial power iteration}) is motivated by the phenomenon of \emph{straggling} associated 
	with cloud computing infrastructures 
	when the matrix-vector products with the matrix $\mathbf{A}$ are computed in parallel on remote processing elements (workers) \cite{hussain2019sla,kalantzis2025straggler,11196482}. Straggling workers refer to those processing elements that 
	communicate their results significantly slower than their peers and thus delay the general flow of 
	computations \cite{wang2015using}. Common causes of straggling are the heterogeneous nature of cloud servers 
	as well as the oversubscription model enforced on non-dedicated servers \cite{jacquet2024scroogevm}. 
	Aside straggling, another motivating application for the work presented in this paper is out-of-core execution for 
	matrices that are too large to fit in the system memory \cite{zhou2012out,kalantzis2023rayleigh,kalantzis2025single}. In such cases, performing a single step of power iteration requires multiple passes over secondary 
	memory, with each pass fetching a different block of rows (columns) of the matrix $\mathbf{A}$. To mitigate excessive memory transfer costs, an alternative is to perform an inexact matrix-vector product by loading only $T\ll N$ rows (columns) of $\mathbf{A}$ with the value of $T$ depending on the amount of available system memory \cite{kalantzis2021projection}. 
	
	When the matrix $\mathbf{A}$ is stored by rows, partial power iteration requires a mechanism to set a value for the 
	$N-T_k$ non-observed entries of $\bm{x}_k =\mathbf{A}\bm{x}_{k-1}$ so that it 
	proceeds to the next iteration without delays. In this work  
	we distinguish among two different options. The first option sets the $N-T_k$ 
	non-observed entries of $\bm{x}_k$ equal to zero, \emph{i.e.}, $[\bm{x}_k]_i=0,\ i\notin 
	{\mathcal T}_k$. The latter implies that $\bm{x}_k$ has at most $T_k$ non-zero entries. 
	While special\footnote{For example, adjacency matrices of star graphs or random sparse 
		matrices with heavy tails.} matrices that exhibit approximately sparse dominant 
	eigenvectors can be constructed, general symmetric matrices have dense\footnote{For 
		random matrices, the work in \cite{tao2011random} suggests that the coefficients of 
		eigenvectors tend to be spread out (delocalized) rather than concentrated in a few 
		entries.} eigenvectors, and thus setting the entries of $\bm{x}_k$ to zero limits its 
	approximation accuracy. Nonetheless, as we demonstrate in this paper, this option is 
	equivalent -in expectation- to the approximation $\bm{x}_k$ resulting after $k$ steps 
	of the classical power iteration algorithm with exact matrix-vector products. The second 
	option sets the $i$-th non-observed entries of $\bm{x}_k,\ [\bm{x}_k]_i$, equal to the 
	corresponding entry in the previous approximation $\bm{x}_{k-1}$, \emph{i.e.}, 
	$[\bm{x}_k]_i = [\bm{x}_{k-1}]_i,\ i\notin {\mathcal T}_k$. In this case, the resulting 
	inexact power iteration algorithm is mathematically equivalent to asynchronous power 
	iteration without normalization and is known to converge to the dominant eigenvector~\cite{teke2018asynchronous}. As we demonstrate in Section~\ref{sec:varr}, setting $[\bm{x}_k]_i=0,\ i\notin 
		{\mathcal T}_k$ results in larger variance than setting $[\bm{x}_k]_i = [\bm{x}_{k-1}]_i,\ i\notin {\mathcal T}_k$, which makes the former more attractive during the early stage of an algorithm when large improvements are desired but less so in the later phase when more accurate updates are necessary.

	\subsection{Contributions}
	We present two new algorithms that converge to the dominant eigenvector of a symmetric matrix $\mathbf{A}$ with a simple eigenvalue of largest modulus when the row subset ${\mathcal T}_k$ 
	is sampled with equal probability from the space of all $T_k$-size row subsets of $\{1,2\ldots,N\}$. The contributions made in this paper can be summarized as follows:
	\begin{itemize}
		\item  The first algorithm consists of partial power iteration with a probabilistic mechanism to choose between the options $[\bm{x}_k]_i=0,\ i\notin {\mathcal T}_k$ and 
		$[\bm{x}_k]_i=[\bm{x}_{k-1}]_i,\ i\notin {\mathcal T}_k$. The intuition behind this approach is to as follows: leverage the first option to eliminate the contribution of non-dominant eigenvectors during the early stages followed by the second option to maintain nearly accurate eigencomponents as the iterative process progresses.
		\item The second algorithm approximates the dominant eigenvector of the matrix $\mathbf{A}$ via a weighted linear combination of all previous approximations. In contrast to the first algorithm, the matrix $\mathbf{A}$ is now accessed by columns. We show that the sequence of approximate eigenvectors converges to the dominant eigenvector direction of the matrix $\mathbf{A}$ and this algorithm provides higher accuracy than any alternative during the early phases of partial power iteration.
		\item We analyze the first and second orders statistics introduced by the options $[\bm{x}_k]_i=0,\ i\notin {\mathcal T}_k$ and $[\bm{x}_k]_i=[\bm{x}_{k-1}]_i,\ i\notin {\mathcal T}_k$. Our analysis on the convergence and quality of asynchronous variants, presented in Proposition~\ref{pro:mean-squre-convergence}, refines and improves previous results, see e.g.~\cite{teke2018asynchronous}. In addition, we conduct a detailed comparative analysis on the variances of different probabilistic mechanisms.
	\end{itemize}
	Although not explored in this paper, we note that the proposed algorithms can be used to compute more than one dominant eigenvector via deflation, \emph{i.e.}, by operating on the orthogonal complement of computed eigenvectors \cite{chapman1997deflated}.
	
	\subsection{Organization and notation} 
	
	Section \ref{sec:back} 
	presents background information on the power iteration algorithm and an asynchronous variant. Section \ref{sec:newm} presents and analyzes partial power iteration 
	when the non-observed entries of the matrix-vector products with the matrix $\mathbf{A}$ 
	are set equal to zero. Section \ref{sec:varr} presents results on the variance of two 
	different models to update partial power iteration. Section~\ref{sec:switch_algo} describes an algorithm that probabilistically switches between the two models. Section \ref{sec:alg2} presents 
	an averaging algorithm to approximate the dominant eigenvector of the matrix $\mathbf{A}$ by 
	a linear combination of all previous approximations. Section \ref{sec:experiments} presents numerical experiments on dense model problems.
	Finally, Section \ref{sec:conclusions} presents our concluding remarks.
	
	Lowercase bold letters, $\bm{a}$, denote vectors, and uppercase bold letters, $\mathbf{A}$, denote matrices. 
	The $ij$-th entry of the matrix $\mathbf{A}$ is denoted by $\mathbf{A}_{ij}$, while the $ij$-th entry of the 
	matrix power $\mathbf{A}^k,\ k\in \mathbb{N}$, is denoted by $[\mathbf{A}^k]_{ij}$. Similarly, the $i$-th entry of a vector $\bm{x}$ will be denoted as $[\bm{x}]_i$. The norm $\|\cdot\|$ refers to the Euclidean norm, \emph{i.e.}, 
	$\|\bm{x}\|=(\sum_{i=1}^N [\bm{x}]_i^2)^\frac12$. The term $\mathbb{E}[\cdot]$ denotes the expectation operator. 
	The symbol $\mathbb{N}$ denotes the set of positive integers.

	\section{Background: the power iteration algorithm} \label{sec:back}
	
	In this section we provide background information on the power iteration algorithm and 
	its
	asynchronous variants. Unless mentioned otherwise (\emph{i.e.}, as in Section \ref{sec:alg2}), we assume that at any given iteration $k$ we can observe only a subset of $1\leq T_k\leq N$ entries of the matrix-vector product $\mathbf{A}\bm{x}_{k-1}$. 
	
	\subsection{The classical power iteration algorithm}
	
	In this section, we provide a summary of the classical power iteration algorithm for the symmetric matrix case. For a detailed discussion on classical power iteration, we refer readers to 
	\cite{saad2011numerical}. Let $\mathbf{A}\in \mathbb{R}^{N\times N}$ be symmetric with $(\lambda_j,\bm{v}_j)$, $||v_j||=1$,  
	denoting its corresponding $j$-th eigenpair such that $\lambda_1 > |\lambda_2| \geq \ldots \geq |\lambda_N|$, \emph{i.e.}, the dominant eigenvalue $\lambda_1$ is simple. Without loss of generality, let us assume that  $\g_1=1$. The matrix $\mathbf{A}$ can be written in a compact form as $\mathbf{A}=\mathbf{V}\mathbf{\Lambda} \mathbf{V}^\top$ where 
	$\mathbf{\Lambda} = \mathrm{diag}(\lambda_1,\ldots,\lambda_N)$ and $\mathbf{V}=[\bm{v}_1,\ldots,\bm{v}_N]$ 
	with orthonormal $\bm{v}_i$'s.
	
	Consider now the $k$-th matrix power of $\mathbf{A}$ expressed as the sum of $N$ outer products:
	\begin{align*}
		\mathbf{A}^k = \mathbf{V}\mathbf{\Lambda}^k \mathbf{V}^\top = \sum_{j=1}^N \lambda_j^k \bm{v}_j \bm{v}_j^\top.
	\end{align*}
	Let $\bm{x}_0\in \mathbb{R}^N$ be an initial approximation of $\bm{v}_1$ chosen at random such that\footnote{For example, any distribution with continuous support, such as a Gaussian random vector, satisfies this condition with probability one \cite{kalantzis2021fast}.}  
	$\bm{v}_1^{\top}\bm{x}_0 \neq 0$. The matrix-vector product $\mathbf{A}^k\bm{x}_0$ can be then written as
	\begin{align*}
		\bm{x}_{k} & = \mathbf{A}^k\bm{x}_0 = \sum_{j=1}^N \lambda_j^k \bm{v}_j (\bm{v}_j^\top \bm{x}_0)
		= \lambda_1^k \sum_{j=1}^N \frac{\lambda_j^k}{\lambda_1^k} \bm{v}_j (\bm{v}_j^\top \bm{x}_0).
	\end{align*}
	Recalling that $|\lambda_1| > |\lambda_j|,\ j=2,\ldots,N$, all the ratios $\frac{\lambda_j^k}{\lambda_1^k}$
	converge to zero as $k\rightarrow \infty$, and thus $A^k\bm{x}_0$ becomes increasingly parallel to
	the dominant eigendirection $\bm{v}_1$. More specifically, the rate
	in which
	the eigendirection $\bm{v}_j$ is annihilated from $\bm{x}_{k}$ is determined by the ratio $\frac{\lambda_j^k}{\lambda_1^k}$, with larger
	gaps between $|\lambda_j|$ and $|\lambda_1|$ leading to faster annihilation of the eigendirection
	$\bm{v}_j$. Since the eigendirection that is annihilated with the slowest rate is $v_2$, the rate of convergence of power iteration is 
	dictated by the ratio $\frac{|\lambda_2|}{|\lambda_1|}$. 
	
	A practical implementation of the power iteration algorithm starts with $k=1$ and expresses the approximation of $\bm{v}_1$ at the $k$-th iteration as $\bm{x}_{k} = A\bm{x}_{k-1}/\|A\bm{x}_{k-1}\|$. The procedure
	is typically terminated once two successive approximations $\bm{x}_{k-1}$ and $\bm{x}_{k}$ become parallel
	in floating-point arithmetic. The power iteration algorithm can be extended to the simultaneous 
	computation of more than one eigenpairs, an algorithmic extension known as subspace iteration \cite{saad2011numerical}.

	\subsection{Asynchronous power iteration} \label{sec:async}
	
	In this section, we provide a brief summary on the asynchronous power iteration presented
	in~\cite{teke2019random,teke2018asynchronous}. For a detailed discussion, we refer the
	reader to~\cite[Section 2]{teke2020signals}, while background information on asynchronous 
	computing and numerical methods can be found in \cite{bertsekas2015parallel}. 
	The asynchronous power iteration does not perform any scaling of the vector iterates and 
	thus requires $|\lambda_1|=1$ which we assume throughout this section. 

	Let $\bm{\widehat{x}}_{k-1}\in \mathbb{R}^N,\ k>1$, denote the current approximation of 
	$\bm{v}_1$ by asynchronous power iteration where $\bm{\widehat{x}}_0\in \mathbb{R}^N$ is chosen 
	such that $\bm{v}_1^\top\bm{\widehat{x}}_0 \neq 0$. Moreover, let $T_{k}\in [1,N]$ and denote 
	by ${\mathcal T}_{k}$ a subset of $T_{k}$ indices drawn from the set $\{1,2,\ldots,N\}$ 
	without replacement \cite{teke2018asynchronous}. Asynchronous power iteration computes $\bm{\widehat{x}}_{k}$ via by the formula\footnote{This formula can be seen as an instance of the model described in \cite{lubachevsky1986chaotic}.}
	\begin{equation}\label{updzero2}
		[\bm{\widehat{x}}_{k}]_i = 
		\begin{cases}
			\sum_{j=1}^{j=N} \mathbf{A}_{ij}[\bm{\widehat{x}}_{k-1}]_j  &\ \mathrm{if}\ i\in {\mathcal T}_{k}\\
			[\bm{\widehat{x}}_{k-1}]_i &\ \mathrm{if}\ i\notin {\mathcal T}_{k}
		\end{cases}.
	\end{equation} 
	In words, the $i$-th component of $\bm{\widehat{x}}_{k}$ is updated if and only if 
	$i\in {\mathcal T}_{k}$, otherwise $[\bm{\widehat{x}}_{k}]_i=[\bm{\widehat{x}}_{k-1}]_i$. 
	The deterministic number of updated indices $T_{k}$ can be seen as the $k$-th 
	sample of a random integer $T\in \mathbb{N}$ admitting values from the closed 
	interval $[\gamma_1,\gamma_2] \subseteq [1,N]$ where $\gamma_1,\gamma_2 \in 
	\mathbb{N}$ and $\gamma_1\leq \gamma_2$. Likewise, the deterministic sample of 
	updated indices ${\mathcal T}_{k}$ is a sample of the random variable ${\mathcal T}$ that 
	denotes the space of all possible size-$T$ subsets of integers from $\{1,2,\ldots,N\}$ 
	without replacement. In other words $T_k$ is the size of the set $\mathcal{T}_k$.

	\begin{rem}
		While our focus in this paper does not lie in asynchronous computing, the asynchronous 
		power iteration algorithm is mathematically equivalent to (non-normalized) power 
		iteration with partial matrix-vector products under the condition that the omitted 
		entries of the matrix-vector product $\mathbf{A}\bm{\widehat{x}}_{k-1}$ are set equal 
		to the corresponding entries of the vector $\bm{\widehat{x}}_{k-1}$. 
		For this reason, 
		asynchronous power iteration will serve as our baseline algorithm throughout the rest 
		of this paper.
	\end{rem}
	
	Let now $\bm{e}_i\in \{0,1\}^N$ denote the $i$-th column of the identity matrix  
	of size $N$. The matrix 
	\begin{equation*}
		\mathbf{D}_{\mathcal{T}_{k}}=\sum_{i\in {\mathcal T}_{k}} \bm{e}_i\bm{e}_i^\top    
	\end{equation*}
	is diagonal with ones on every row $i\in {\mathcal T}_{k}$ and zero otherwise. Then, the 
	update in (\ref{updzero2}) can be compactly written through the matrix-vector product:
	\begin{align*}
		\bm{\widehat{x}}_{k}=[\mathbf{I}+\mathbf{D}_{\mathcal{T}_{k}}(\mathbf{A}-\mathbf{I})] \bm{\widehat{x}}_{k-1}.
	\end{align*} 
	The iterative process defined by (\ref{updzero2}) converges -in expectation- to $\bm{v}_1$  
	when classical power iteration does, \emph{i.e.}, when $\lambda_2,\ldots,\lambda_N \in (-1,1)$. 
	Moreover, in contrast to classical power iteration without normalization, $\bm{\widehat{x}}_{k}$ 
	might converge to $\bm{v}_1$ even when a non-unit eigenvalue lies on or outside the unit circle. 
	
	Assume now that both $T$ and ${\mathcal T}$ are sampled so that the samples $T_{k}$ and ${\mathcal T}_{k}$ 
	appear with equal probability, \emph{i.e.}, $T_{k}$ can take any value in $[\gamma_1,\gamma_2]\subseteq [1,N]$ with probability $1/(\gamma_2-\gamma_1+1)$, and, likewise, each of the ${\mybinom{N}{T_{k}}}$ possible row sets of $\mathbf{A}$ is chosen with probability equal to the reciprocal of ${\mybinom{N}{T_{k}}}$. In \cite{teke2019random}, it was shown that the expected value of the iterate $\bm{\widehat{x}}_{k}$ can be then compactly written as 
	\begin{align*}
		\mathbb{E}[\bm{\widehat{x}}_{k}] & = \mathbb{E}[[\mathbf{I}+\mathbf{D}_{\mathcal{T}_{k}}(\mathbf{A}-\mathbf{I})]\cdots [\mathbf{I}+\mathbf{D}_{\mathcal{T}_{1}}(\mathbf{A}-\mathbf{I})]]\bm{\widehat{x}}_0 \\ 
		& = \left[\frac{\mathbb{E}[T]}{N}\mathbf{A} + \frac{N-\mathbb{E}[T]}{N}\mathbf{I}\right]^k \bm{\widehat{x}}_0.
	\end{align*}
	Therefore, applying $k$ steps of asynchronous power iteration can be seen as a process that applies -in expectation- the $k$-th power of the  matrix $\frac{\mathbb{E}[T]}{N}\mathbf{A} + \frac{N-\mathbb{E}[T]}{N}\mathbf{I}$ onto the 
	initial vector $\bm{\widehat{x}}_0$. The eigenvalues of this matrix are 
	equal to $1+\frac{\mathbb{E}[T]}{N}(\lambda_j-1),\ j=1,\ldots,N$, while the expectation of the inner product between $\bm{\widehat{x}}_{k}$ and $\bm{v}_j,\ j=1,2,\ldots,N$, is equal to (e.g., see Theorem 1 in~\cite[Thm. 1]{teke2018asynchronous}) 
	\begin{equation}
		\label{eqn:vjxk}
		\mathbb{E}[\bm{v}_j^\top\bm{\widehat{x}}_{k}] = \left[1+\frac{\mathbb{E}[T]}{N}(\lambda_j-1)\right]^k \bm{v}_j^\top\bm{\widehat{x}}_{0}.
	\end{equation}
	Furthermore, the convergence of the algorithm can be characterized as follows. 
	\begin{pro}
		\label{pro:mean-squre-convergence} 
		For $\bm{{\widehat x}}_{k}$ defined by~\eqref{updzero2}, we have, for $k \in \mathbb{N}$:  
		\begin{equation*}
			\varrho_*\ex\|\bm{{\widehat x}}_{k-1}-(\bm{{\widehat x}}^\top_0 \bm{v}_1)\bm{v}_1\|^2\nonumber \le \ex\|\bm{{\widehat x}}_{k}-(\bm{{\widehat x}}^\top_0 \bm{v}_1)\bm{v}_1\|^2 \le \varrho^*\ex\|\bm{{\widehat x}}_{k-1}-(\bm{{\widehat x}}^\top_0 \bm{v}_1)\bm{v}_1\|^2\ , \label{eqn:decay_statement}
		\end{equation*}
				with 
				\begin{align*}
					\varrho_*:=&\min\limits_{i\ge 2}\left[\left(1-\frac{\ex[T]}{N}\right)+\frac{\ex[T]}{N}\lambda_i^2 \right]
					=\left[\left(1-\frac{\ex[T]}{N}\right)+\frac{\ex[T]}{N}\lambda_N^2 \right],
					\\ \varrho^*:=&\max\limits_{i\ge 2}\left[\left(1-\frac{\ex[T]}{N}\right)+\frac{\ex[T]}{N}\lambda_i^2 \right]=\left[\left(1-\frac{\ex[T]}{N}\right)+\frac{\ex[T]}{N}\lambda_2^2 \right].
				\end{align*}
			\end{pro}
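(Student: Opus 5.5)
The plan is to reduce the statement to a one-step conditional second-moment computation for a fixed random matrix. Write $c:=\bm{\widehat x}_0^\top\bm v_1$ and $\bm e_k:=\bm{\widehat x}_k-c\bm v_1$, and set $\mathbf M_k:=\mathbf I+\mathbf D_{\mathcal T_k}(\mathbf A-\mathbf I)$. Since $\lambda_1=1$ we have $(\mathbf A-\mathbf I)\bm v_1=0$, so $\mathbf M_k(c\bm v_1)=c\bm v_1$. The update $\bm{\widehat x}_k=\mathbf M_k\bm{\widehat x}_{k-1}$ therefore gives the error recursion $\bm e_k=\mathbf M_k\bm e_{k-1}$. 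Because $\mathcal T_k$ is drawn independently of the past, and hence of $\bm e_{k-1}$, I will compute $\ex[\|\bm e_k\|^2\mid \bm e_{k-1}]=\bm e_{k-1}^\top\,\ex[\mathbf M_k^\top\mathbf M_k]\,\bm e_{k-1}$ and then take total expectation.

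The key computation is $\ex[\mathbf M_k^\top\mathbf M_k]$. Expanding, and using $\mathbf D^\top=\mathbf D$ and $\mathbf D^2=\mathbf D$, gives
\begin{equation*}
\mathbf M_k^\top\mathbf M_k=\mathbf I+(\mathbf A-\mathbf I)\mathbf D_{\mathcal T_k}+\mathbf D_{\mathcal T_k}(\mathbf A-\mathbf I)+(\mathbf A-\mathbf I)\mathbf D_{\mathcal T_k}(\mathbf A-\mathbf I).
\end{equation*}
Under the uniform sampling model, each index belongs to $\mathcal T_k$ with probability $\ex[T]/N$, so $\ex[\mathbf D_{\mathcal T_k}]=p\,\mathbf I$ with $p:=\ex[T]/N$. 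Since the expression is linear in $\mathbf D_{\mathcal T_k}$, this yields
\begin{equation*}
\ex[\mathbf M_k^\top\mathbf M_k]=\mathbf I+2p(\mathbf A-\mathbf I)+p(\mathbf A-\mathbf I)^2=(1-p)\mathbf I+p\mathbf A^2 .
\end{equation*}
This matrix is diagonalized by $\mathbf V$, with eigenvalues $(1-p)+p\lambda_i^2$. Writing $\bm e_{k-1}=a\bm v_1+\bm w$ with $\bm w\perp\bm v_1$, we obtain
\begin{equation*}
\ex[\|\bm e_k\|^2\mid\bm e_{k-1}]=a^2+\sum_{i\ge2}\big[(1-p)+p\lambda_i^2\big](\bm v_i^\top\bm w)^2 .
\end{equation*}
The $\bm w$-part is sandwiched between $\varrho_*\|\bm w\|^2$ and $\varrho^*\|\bm w\|^2$, where the extremes are attained at $\lambda_N^2$ and $\lambda_2^2$ by the eigenvalue ordering. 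Taking total expectation then gives the claim for that part.

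The main obstacle is the $\bm v_1$-component $a=\bm v_1^\top\bm e_{k-1}$. It is zero at $k=1$, since $\bm e_0\perp\bm v_1$ by the choice of $c$, and $\ex[\bm v_1^\top\bm e_k]=0$ for all $k$ by \eqref{eqn:vjxk}. Pathwise, however, $\bm v_1^\top\bm e_k=\bm v_1^\top\bm e_{k-1}+\bm v_1^\top\mathbf D_{\mathcal T_k}(\mathbf A-\mathbf I)\bm e_{k-1}$ need not vanish. The lower bound is unaffected, since $\varrho_*\le1$ gives $a^2\ge\varrho_* a^2$. For the upper bound, the $a^2$ term carries factor $1>\varrho^*$, so the inequality is immediate only when $a=0$, that is, for the component of the error in $\bm v_1^\perp$, or at $k=1$. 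My first attempt will be to measure the error as $\|(\mathbf I-\bm v_1\bm v_1^\top)\bm{\widehat x}_k\|^2$, i.e.\ to track $\bm w$. I will then check whether $\mathbf M_k$ maps $\bm v_1^\perp$ into itself in expectation well enough to close the recursion. If not, I will state the upper bound for the $\bm v_1^\perp$-projection and handle the $\bm v_1$-drift separately as a zero-mean martingale term. The remaining steps, namely the expectation of $\mathbf D$ and the spectral sandwich, are routine.
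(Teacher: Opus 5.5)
\textbf{The upper bound is false, so your proposal cannot be completed as stated; your lower bound is correct.} Your lower-bound argument is complete and is the same computation as in the paper. It rests on $\ex[\mathbf M_k^\top\mathbf M_k]=(1-p)\mathbf I+p\mathbf A^2$, the spectral sandwich on $\bm v_1^\perp$, and $\varrho_*\le 1$ for the $\bm v_1$-term.

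\textbf{Where the paper's proof goes wrong.} The obstacle you flag is exactly the faulty step. The paper expands $\widehat{\bm x}_{k-1}=\sum_i b_i\bm v_i$ and asserts $b_1=\widehat{\bm x}_0^\top\bm v_1$. But \eqref{eqn:vjxk} only gives $\ex[b_1]=\widehat{\bm x}_0^\top\bm v_1$. Your $a_{k-1}=b_1-c$ has mean zero, yet $\ex[a_{k-1}^2]>0$ in general, and it enters $\ex\|\bm e_k\|^2$ with coefficient $1>\varrho^*$.

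\textbf{Counterexample.} Take $N=2$, $\bm v_1=(1,1)^\top/\sqrt2$, $\bm v_2=(1,-1)^\top/\sqrt2$, and $\mathbf A=\bm v_1\bm v_1^\top$, so $\lambda_2=0$. Let $T\equiv 1$, so $p=\tfrac12$ and $\varrho_*=\varrho^*=\tfrac12$. Let $\widehat{\bm x}_0=\bm v_1+\bm v_2$, so $c=1$ and your $\bm e_0=\bm v_2$.
\begin{itemize}
\item $\mathbf M_k\bm v_1=\bm v_1$.
\item $\mathbf M_k(s\bm v_2)=(\mathbf I-\mathbf D_{\mathcal T_k})s\bm v_2$ has $\bm v_2$-coordinate $s/2$, and $\bm v_1$-coordinate $\pm s/2$ with equal probability.
\item Hence $\bm v_2^\top\bm e_k=2^{-k}$ and $a_k=\sum_{j=1}^k\epsilon_j2^{-j}$, with i.i.d.\ fair signs $\epsilon_j$.
\item Therefore $\ex\|\bm e_k\|^2=\tfrac13+\tfrac23\,4^{-k}$, and $\ex\|\bm e_2\|^2=\tfrac38>\tfrac14=\varrho^*\,\ex\|\bm e_1\|^2$.
\end{itemize}
In fact $\widehat{\bm x}_k\to(1+U)\bm v_1$ with $U$ uniform on $[-1,1]$. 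So even the mean-square convergence to $c\bm v_1$ asserted in the remark after the proposition fails.

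\textbf{Your two fallbacks.} The martingale fallback cannot help. The drift enters through its second moment, and $a_k$ is an $L^2$-bounded martingale with a nondegenerate limit, so having mean zero is irrelevant.

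The projection route does close, but it proves a different statement. Write $\mathbf P=\mathbf I-\bm v_1\bm v_1^\top$ and $\bm w_k=\mathbf P\bm e_k$.
\begin{itemize}
\item Since $\mathbf P\mathbf M_k\bm v_1=0$, we get $\bm w_k=\mathbf P\mathbf M_k\bm w_{k-1}$.
\item Since $\|\mathbf P\mathbf M_k\bm w\|\le\|\mathbf M_k\bm w\|$, we get $\ex\|\bm w_k\|^2\le\varrho^*\,\ex\|\bm w_{k-1}\|^2$.
\end{itemize}
The matching lower bound fails for this quantity, however. In the example above, $\|\bm w_k\|^2$ shrinks by the factor $\tfrac14<\varrho_*$ at every step.

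For the quantity in the proposition itself, two things are provable: your lower bound, and monotonicity $\ex\|\bm e_k\|^2\le\ex\|\bm e_{k-1}\|^2$, which follows from $(1-p)+p\lambda_i^2\le 1$. Contraction by $\varrho^*$ is not provable.
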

			\begin{proof}
				Following~\eqref{eqn:vjxk}, we have 
				\begin{equation*}
					\ex\|\bm{{\widehat x}}_{k}-(\bm{{\widehat x}}^\top_0 \bm{v}_1)\bm{v}_1\|^2=\ex[\|\bm{{\widehat x}}_{k}\|^2-(\bm{{\widehat x}}^\top_0 \bm{v}_1)^2].
				\end{equation*}
				Thus, it suffices to show that
				\begin{align}
					\varrho_* [\ex[\|\bm{{\widehat x}}_{k-1}\|^2-(\bm{{\widehat x}}^\top_0 \bm{v}_1)^2] \le & \ex[\|\bm{{\widehat x}}_{k}\|^2]-(\bm{{\widehat x}}^\top_0 \bm{v}_1)^2 \le \varrho^* [\ex[\|\bm{{\widehat x}}_{k-1}\|^2-(\bm{{\widehat x}}^\top_0 \bm{v}_1)^2]\,. \label{eqn:decay}
				\end{align}
				Recall now that $\mathbf{A}$ is a symmetric matrix with the eigenpairs $(\lambda_i,\bm{v}_i)$, $i=1,\ldots, N$,  where the eigenvectors $\bm{v}_1,\ldots,\bm{v}_N$, form an orthonormal basis with $\mathbf{A}\bm{v}_i=\lambda_i \bm{v}_i$, $\bm{v}_i^\top \bm{v}_j=0$ for $i\not=j$, $\bm{v}_i^\top \bm{v}_i=1$ and $\lambda_i$ satisfy and  $|\lambda_i|<\lambda_1=1$. Thus, we can write $\bm{{\widehat x}}_{k-1}=\sum_{i=1}^N b_i \bm{v}_i$, hence $\|\bm{{\widehat x}}_{k-1}\|^2 =\sum_{i=1}^N b_i^2$. Then, with $\ex_{k}$ denotes the expectation taken with respect to the random matrix at step $k$, we have,  
				\begin{align*}
					\ex[\bm{{\widehat x}}_{k}^\top \bm{{\widehat x}}_{k}] & =  \ex[ \bm{{\widehat x}}_{k-1}^\top \ex_{k}[I+\mathbf{D}_{\mathcal{T}}(A-I)]^2 \bm{{\widehat x}}_{k-1}]
					\\ &= \ex\left[ \left(\sum_{i=1}^N b_i \bm{v}_i\right)^\top \left[\left(1-\frac{\ex[T]}{N}\right)\mathbf{I}+\frac{\ex[T]}{N} \mathbf{A}^2\right]\left(\sum_{i=1}^N b_i \bm{v}_i\right)\right]
					\\ &=
					\ex\left[b^2_1+\sum_{i=2}^N b_i^2 \left[\left(1-\frac{\ex[T]}{N}\right)+\frac{\ex[T]}{N}\lambda_i^2\right]\right],
				\end{align*}
				where the last equality is the result of orthogonality. It is easy to see that $b_1= \bm{{\widehat x}}^\top_0 \bm{v}_1$. Inequality~\eqref{eqn:decay} then follows directly. 
			\end{proof}
			\begin{rem}\leavevmode
				\begin{itemize}
					\item Proposition~\ref{pro:mean-squre-convergence} concludes that $\bm{{\widehat x}}_{k}$ converges exponentially to $\left(\bm{{\widehat x}}^\top_0 \bm{v}_1\right)\bm{v}_1$ in mean-square norm. This in turn also implies all weaker convergences, that is convergence in expectation, probability and distribution, but almost sure convergence does not necessarily hold. Furthermore, the lower and upper bounds in Proposition~\ref{pro:mean-squre-convergence} can be achieved if $\bm{{\widehat x}}_k$ takes the values of $\bm{v}_N$ and $\bm{v}_2$, respectively. 
					\item
					In~\cite{teke2018asynchronous}, similar but more involved bounds are obtained (see Theorem 2 therein), 
					while 
					the associated exponential convergence proof requires an additional condition  on the non-unit eigenvalues of the matrix $\mathbf{A}$.
					\item
					Proposition~\ref{pro:mean-squre-convergence} can be generalized with the specific distributional assumption on ${\mathcal T}_{k}$ relaxed to be just independent and $T_k$ bounded by $\gamma_1$ and $\gamma_2$. 
				\end{itemize}
			\end{rem}

			\subsection{A limitation of the asynchronous matrix-vector model}
			
			Following the discussion in the previous section, applying $k$ steps of the asynchronous 
			power iteration transforms -in expectation- the spectrum $\lambda_1,\ldots,\lambda_N$ of 
			the matrix $\mathbf{A}$ to the spectrum $1+\frac{\mathbb{E}[T]}{N}\left(\lambda_j-1\right)$ of 
			the matrix $\frac{\mathbb{E}[T]}{N}\mathbf{A} + \frac{N-\mathbb{E}[T]}{N}\mathbf{I}$. 
			The linear transformation $1+\frac{\mathbb{E}[T]}{N}\left(\lambda_j-1\right)$ can be rewritten as 
			$\frac{\mathbb{E}[T]}{N}\lambda_j + \frac{N-\mathbb{E}[T]}{N}$ from which it becomes evident 
			that each eigenvalue $\lambda_j$ is multiplied by the positive ratio $\frac{\mathbb{E}[T]}{N}$ 
			followed by a shift with the positive scalar $\frac{N-\mathbb{E}[T]}{N}$. More specifically, 
			the multiplication with the factor $\frac{\mathbb{E}[T]}{N}$ reduces the modulus of $\lambda_j$ 
			while, at the same time, the addition of $\frac{N-\mathbb{E}[T]}{N}$ shifts the eigenvalues 
			rightwards. Thus, with the exception of $\lambda_1$, when $\lambda_j \in (0,1)$, small values 
			of $\mathbb{E}[T]$ cluster $\frac{\mathbb{E}[T]}{N}\lambda_j + \frac{N-\mathbb{E}[T]}{N}$ 
			closer to one, thus reducing the spectral gap between the dominant and trailing eigenvalues.
			
			
			This drawback can be understood better by considering the extreme case where 
			$\lambda_j=0,\ j=2,\ldots,N$. The rank of the matrix $\mathbf{A}$ is equal to one, and 
			just one step of classical power iteration will compute the dominant eigenvector 
			$\bm{v}_1$. On the other hand, asynchronous power iteration will -in expectation- 
			apply power iteration to the matrix $\frac{\mathbb{E}[T]}{N}\mathbf{A} + \frac{N-\mathbb{E}[T]}{N}\mathbf{I}$ 
			that has $N$ non-zero eigenvalues for any choice $\mathbb{E}[T] < N$. Moreover, 
			all $N-1$ zero eigenvalues of $\mathbf{A}$ are identically mapped to an eigenvalue of 
			modulus $\frac{N-\mathbb{E}[T]}{N}$. The spectral gap of the matrix $\frac{\mathbb{E}[T]}{N}\mathbf{A} + \frac{N-\mathbb{E}[T]}{N}\mathbf{I}$ now becomes equal to $\frac{N-\mathbb{E}[T]}{N-\mathbb{E}[T] + \mathbb{E}[T]}$, which approaches 1  as $N$ grows and $\mathbb{E}[T]$ is bounded or grows much slower than $N$. 
			
			\begin{figure*}[ht]
				\centering
				\includegraphics[width=0.48\linewidth]{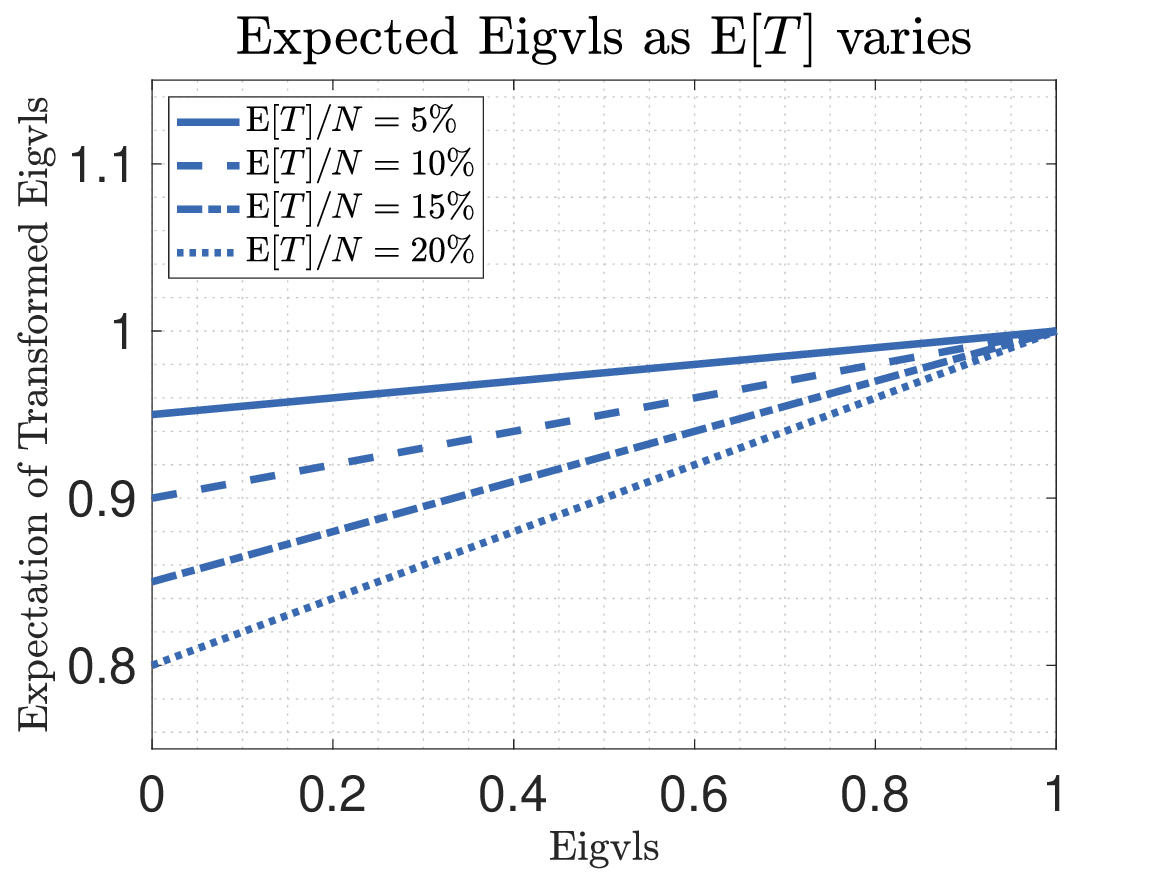}   
				\includegraphics[width=0.48\linewidth]{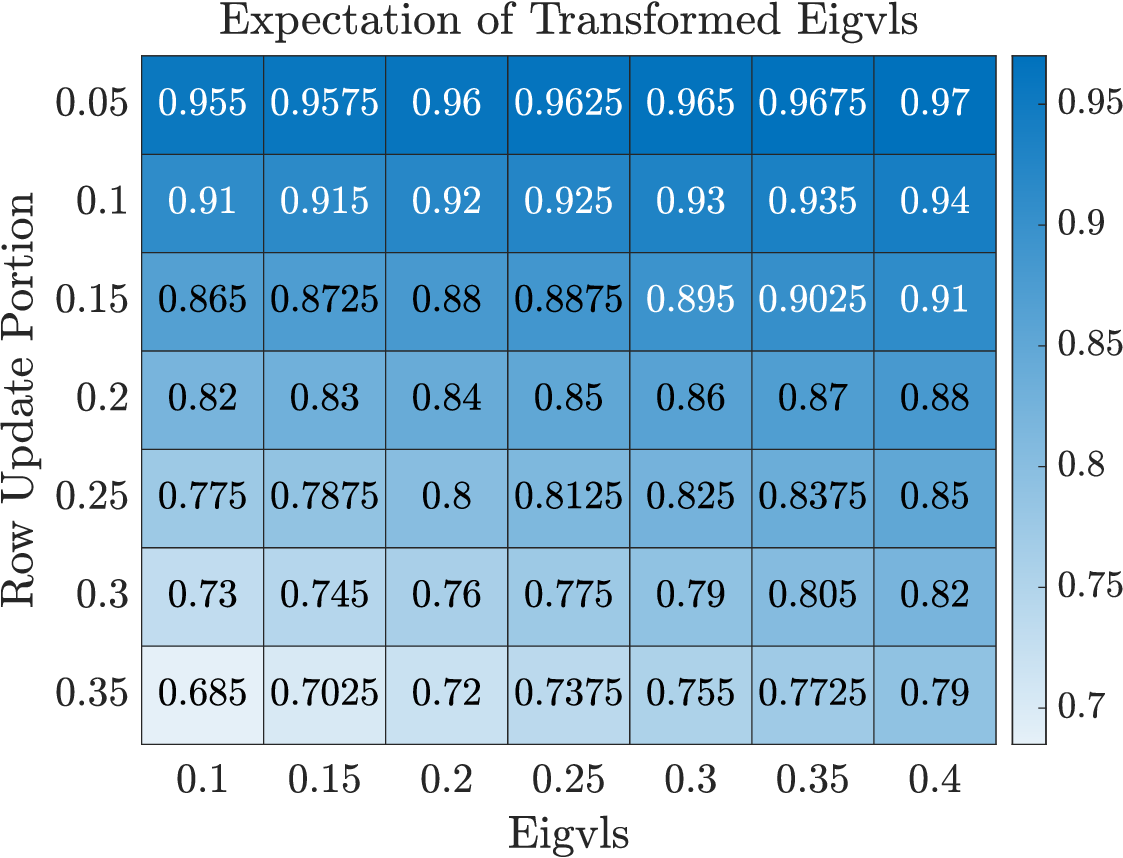}  
				\caption{{\it Left: mapping of $\lambda$ to $1+\frac{\mathbb{E}[T]}{N}(\lambda-1)$ for different values of $\frac{\mathbb{E}[T]}{N}$. Right: zoomed-in mapping of $\lambda$ to $1+\frac{\mathbb{E}[T]}{N}(\lambda-1)$ for $\frac{\mathbb{E}[T]}{N}=\{0.05:0.05:0.35\}$ and $\lambda=\{0.1:0.05:0.4\}$.}}\label{fig:2}
			\end{figure*}
			
			Figure \ref{fig:2} plots the mapping of $\lambda_j \in [0,1]$ ($x$-coordinate) to 
			$1+\frac{\mathbb{E}[T]}{N}(\lambda_j-1)$ ($y$-coordinate) for various values of the ratio 
			$\frac{\mathbb{E}[T]}{N}$. In particular, as $\mathbb{E}[T]\rightarrow 1$, the mapping 
			resembles a line that becomes more parallel to the real axis, indicating that the spread 
			of the range to which each $\lambda_j$ is mapped to becomes narrower around one. On the 
			other hand, as ${\mathbb{E}[T]}\rightarrow N$, the modulus of 
			$1+\frac{\mathbb{E}[T]}{N}\left(\lambda_j-1\right)$ is mapped closer to the original modulus 
			of $\lambda_j$. As expected, setting $\mathbb{E}[T]=N$ maps each $\lambda_j$ to itself, 
			since $\frac{\mathbb{E}[T]}{N}A + \frac{N-\mathbb{E}[T]}{N}I$ now becomes equal to $\mathbf{A}$.

			\section{An alternative matrix-vector model}\label{sec:newm}
			
			In this section, we discuss a different model to update the approximation 
			$\bm{\widehat{x}}_{k-1}$ than the one presented in (\ref{updzero2}). For the 
			sake of simplicity, we assume the same setting as in the asynchronous power 
			iteration where $|\lambda_1|=1$. The matrix-vector model suggested in this section is devised so that the approximate eigenvector $\bm{\widehat{x}}_k$ is -in expectation- equal to $\mathbf{A}^k\bm{\widehat{x}}_0$
			where $\bm{\widehat{x}}_0\in 
			\mathbb{R}^N$ is any vector such that $\bm{\widehat{x}}_0^\top \bm{v}_1 \neq 0$. More specifically, we 
			consider the update induced by the following formula for $k\geq 1$:
			\begin{equation} \label{updzero0}
				\bm{\widehat{x}}_{k} = \frac{N}{\mathbb{E}[T]}\mathbf{D}_{\mathcal{T}_k}\mathbf{A}\,\bm{\widehat{x}}_{k-1}.
			\end{equation}
			The update in (\ref{updzero0}) is similar to that induced by (\ref{updzero2}), except that 
			entries whose corresponding indices are not in ${\mathcal T}_k$ 
			are set to zero instead. In particular, 
			the $i$th component of $\bm{\widehat{x}}_{k}$ can be written as follows:
			\begin{equation}\label{updzero1}
				[\bm{\widehat{x}}_{k}]_i = 
				\begin{cases}
					\frac{N}{\mathbb{E}[T]}\sum_{j=1}^{j=N} \mathbf{A}_{ij}[\bm{\widehat{x}}_{k-1}]_j  &\ \mathrm{if}\ i\in {\mathcal T}_k\\
					0 &\ \mathrm{if}\ i\notin {\mathcal T}_k
				\end{cases}.
			\end{equation} 
			
			\begin{pro}\label{pro1}
				Let $\bm{\widehat{x}}_{k} = \frac{N}{\mathbb{E}[T]}\mathbf{D}_{\mathcal{T}_k}\mathbf{A}\bm{\widehat{x}}_{k-1}$ where all outcomes of the random variables $T$ and ${\mathcal T}$ appear with equal probability. Then, 
				\begin{align*}
					\mathbb{E}[\bm{\widehat{x}}_{k}] & = \mathbf{A}^k\bm{\widehat{x}}_0.
				\end{align*}
			\end{pro}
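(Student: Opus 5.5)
The plan is to reduce the statement to a single-step identity, $\mathbb{E}[\mathbf{D}_{\mathcal{T}}] = \frac{\mathbb{E}[T]}{N}\mathbf{I}$, and then chain it through the iterations using independence of the samples across iterations. This independence is implicit in the setting of Section~\ref{sec:async}: each pair $(T_k,\mathcal{T}_k)$ is drawn afresh at step $k$, exactly as in the asynchronous analysis yielding (\ref{eqn:vjxk}).

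First I will compute $\mathbb{E}[\mathbf{D}_{\mathcal{T}}]$. The matrix $\mathbf{D}_{\mathcal{T}}$ is diagonal, with $[\mathbf{D}_{\mathcal{T}}]_{ii}=\mathbf{1}\{i\in\mathcal{T}\}$. I condition on $T=t$. Since $\mathcal{T}$ is uniform over the $\mybinom{N}{t}$ subsets of size $t$, the probability that a fixed index $i$ belongs to $\mathcal{T}$ is $\mybinom{N-1}{t-1}/\mybinom{N}{t}=t/N$. Taking expectation over $T$ then gives $\pr(i\in\mathcal{T})=\mathbb{E}[T]/N$ for every $i$. Therefore $\mathbb{E}[\mathbf{D}_{\mathcal{T}}]=\frac{\mathbb{E}[T]}{N}\mathbf{I}$, and consequently $\mathbb{E}\bigl[\frac{N}{\mathbb{E}[T]}\mathbf{D}_{\mathcal{T}}\mathbf{A}\bigr]=\mathbf{A}$. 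Uniformity of $T$ on $[\gamma_1,\gamma_2]$ is not actually needed; only the uniformity of $\mathcal{T}$ given $T$ matters.

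Next I will argue by induction on $k$; the case $k=0$ is trivial. Let $\mathcal{F}_{k-1}$ be the $\sigma$-algebra generated by $(T_1,\mathcal{T}_1),\ldots,(T_{k-1},\mathcal{T}_{k-1})$, so that $\bm{\widehat{x}}_{k-1}$ is $\mathcal{F}_{k-1}$-measurable. Because $\mathcal{T}_k$ is independent of $\mathcal{F}_{k-1}$, the tower property gives
\begin{align*}
\mathbb{E}[\bm{\widehat{x}}_k]=\mathbb{E}\Bigl[\mathbb{E}\bigl[\tfrac{N}{\mathbb{E}[T]}\mathbf{D}_{\mathcal{T}_k}\bigr]\mathbf{A}\bm{\widehat{x}}_{k-1}\Bigr]=\mathbf{A}\,\mathbb{E}[\bm{\widehat{x}}_{k-1}]=\mathbf{A}^k\bm{\widehat{x}}_0 .
\end{align*}
Equivalently, I could expand $\bm{\widehat{x}}_k=(N/\mathbb{E}[T])^k\mathbf{D}_{\mathcal{T}_k}\mathbf{A}\cdots\mathbf{D}_{\mathcal{T}_1}\mathbf{A}\bm{\widehat{x}}_0$. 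Since the random factors are independent, the expectation of this product factors into the product of the expectations.

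The only delicate point is the independence of the samples across iterations and of $\bm{\widehat{x}}_{k-1}$ from $\mathcal{T}_k$. Once that is made explicit, everything else is linearity. The combinatorial identity $\pr(i\in\mathcal{T}\mid T=t)=t/N$ is the one genuine computation, and it is immediate.
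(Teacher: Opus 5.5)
Your proposal is correct and takes essentially the same route as the paper. Both expand the iterate as a product of independent random factors (your induction via the tower property is equivalent), factor the expectation using independence, and use $\mathbb{E}[\mathbf{D}_{\mathcal{T}}\mathbf{A}]=\frac{\mathbb{E}[T]}{N}\mathbf{A}$. You additionally derive $\pr(i\in\mathcal{T}\mid T=t)=t/N$, which the paper only asserts, and you correctly note that uniformity of $T$ itself is not needed.
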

			\begin{proof}
				We can write $\bm{\widehat{x}}_{k}$ as 
				\begin{equation*}
					\bm{\widehat{x}}_{k} = \left(\frac{N}{\mathbb{E}[T]}\right)^k\mathbf{D}_{\mathcal{T}_k}\mathbf{A}\mathbf{D}_{\mathcal{T}_{k-1}}\mathbf{A} \cdots \mathbf{D}_{\mathcal{T}_1}\mathbf{A}\bm{\widehat{x}}_0.
				\end{equation*}
				Taking expectations on both sides leads to
				\begin{align*}
					\mathbb{E}[\bm{\widehat{x}}_{k}] &= \left(\frac{N}{\mathbb{E}[T]}\right)^k \mathbb{E}\left[\mathbf{D}_{\mathcal{T}_k}\mathbf{A}\mathbf{D}_{\mathcal{T}_{k-1}}\mathbf{A} \cdots \mathbf{D}_{\mathcal{T}_1}\mathbf{A}\bm{\widehat{x}}_0\right]\\
					& = \left(\frac{N}{\mathbb{E}[T]}\right)^k
					\mathbb{E}[\mathbf{D}_{\mathcal{T}_k}\mathbf{A}]\mathbb{E}[\mathbf{D}_{\mathcal{T}_{k-1}}\mathbf{A}] \cdots 
					\mathbb{E}[\mathbf{D}_{\mathcal{T}_1}\mathbf{A}]\bm{\widehat{x}}_0,
				\end{align*}
				where we took advantage of the fact that the $k$ samples of the random matrix 
				$\mathbf{D}_T\mathbf{A}$ are identically and independently distributed. Since 
				all outcomes of the random variables $T$ and ${\mathcal T}$ appear with equal 
				probability, it follows that 
				\begin{equation*}
					\mathbb{E}[\mathbf{D}_{\mathcal{T}_k}\mathbf{A}] = \dfrac{\mathbb{E}[T]}{N} \mathbf{A},\ k\geq 1,
				\end{equation*}
				which concludes the proof.
			\end{proof}
			Therefore, applying a cascade of $k$ samples of the random matrix 
			$\frac{N}{\mathbb{E}[T]}\mathbf{D}_T\mathbf{A}$ to an initial eigenvector approximation 
			$\bm{\widehat{x}}_0$ is -in expectation- equivalent to applying the matrix 
			power $\mathbf{A}^k,\ k\geq 1$. In turn, this is, up to a scaling factor, 
			precisely the algorithm of classical power iteration. In other words, 
			updating the eigenvector approximation as dictated by (\ref{updzero1}) 
			allows -in expectation- to annihilate the non-dominant eigendirections 
			of $\mathbf{A}$ according to the original ratios $|\lambda_j|/|\lambda_1|,\ j=2,\ldots,N$. 
			
			\begin{lem}
				For any $\bm{\widehat{x}}_0\in \mathbb{R}^N$ such that $\bm{\widehat{x}}_0^\top \bm{v}_1 \neq 0$, the 
				iterative process induced by the update in (\ref{updzero1}) results in
				\begin{align*}
					\mathbb{E}[\bm{v}_j^\top\bm{\widehat{x}}_{k}] = \lambda_j^{k}  \bm{v}_j^\top \bm{\widehat{x}}_0.
				\end{align*}
			\end{lem}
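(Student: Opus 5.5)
The plan is to derive the lemma directly from Proposition~\ref{pro1}, using linearity of expectation and the spectral decomposition of $\mathbf{A}$. The vector $\bm{v}_j$ is deterministic, so the map $\bm{x}\mapsto \bm{v}_j^\top\bm{x}$ is a fixed linear functional. It therefore commutes with expectation:
\begin{equation*}
\mathbb{E}[\bm{v}_j^\top\bm{\widehat{x}}_{k}] = \bm{v}_j^\top\,\mathbb{E}[\bm{\widehat{x}}_{k}].
\end{equation*}
This holds because each entry of $\bm{\widehat{x}}_k$ is a finite sum over the finitely many outcomes of $(T_1,\mathcal{T}_1),\ldots,(T_k,\mathcal{T}_k)$, so all expectations exist and are finite.

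Next, I substitute Proposition~\ref{pro1}, which gives $\mathbb{E}[\bm{\widehat{x}}_{k}]=\mathbf{A}^k\bm{\widehat{x}}_0$. This yields $\mathbb{E}[\bm{v}_j^\top\bm{\widehat{x}}_{k}]=\bm{v}_j^\top\mathbf{A}^k\bm{\widehat{x}}_0$.

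To finish, I use the symmetry of $\mathbf{A}$. Since $\mathbf{A}^\top=\mathbf{A}$, we have $\bm{v}_j^\top\mathbf{A}^k=(\mathbf{A}^k\bm{v}_j)^\top=\lambda_j^k\bm{v}_j^\top$. Alternatively, one can expand $\mathbf{A}^k=\sum_{i}\lambda_i^k\bm{v}_i\bm{v}_i^\top$ and apply orthonormality $\bm{v}_j^\top\bm{v}_i=\delta_{ij}$. Either way, $\bm{v}_j^\top\mathbf{A}^k\bm{\widehat{x}}_0=\lambda_j^k\,\bm{v}_j^\top\bm{\widehat{x}}_0$, which is the claim.

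There is no genuine obstacle here. The only point needing care is inherited from Proposition~\ref{pro1}: the factorization of the expectation of the product $\mathbf{D}_{\mathcal{T}_k}\mathbf{A}\cdots\mathbf{D}_{\mathcal{T}_1}\mathbf{A}$ requires the draws $(T_i,\mathcal{T}_i)$ to be independent across iterations, together with the identity $\mathbb{E}[\mathbf{D}_{\mathcal{T}}]=\frac{\mathbb{E}[T]}{N}\mathbf{I}$. I will take both as established there.

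The hypothesis $\bm{\widehat{x}}_0^\top\bm{v}_1\neq 0$ is not needed for the identity itself. It only ensures that the $j=1$ component is nonzero, so that $\mathbb{E}[\bm{\widehat{x}}_k]$ aligns with $\bm{v}_1$ as $k\to\infty$ when $|\lambda_j|<1$ for $j\ge 2$. I would add a remark to that effect after the proof.
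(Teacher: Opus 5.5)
Your proposal is correct and follows the paper's proof essentially verbatim. You pull $\bm{v}_j^\top$ out of the expectation by linearity, invoke Proposition~\ref{pro1} to get $\bm{v}_j^\top\mathbf{A}^k\bm{\widehat{x}}_0$, and finish with the spectral decomposition $\mathbf{A}^k=\mathbf{V}\mathbf{\Lambda}^k\mathbf{V}^\top$ and $\bm{v}_j^\top\mathbf{V}=\bm{e}_j^\top$ (or equivalently symmetry); your remark that the hypothesis $\bm{\widehat{x}}_0^\top\bm{v}_1\neq 0$ plays no role in the identity itself is also accurate.
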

			\begin{proof}
				Following Proposition \ref{pro1} we have $\mathbb{E}[\bm{\widehat{x}}_{k}] = \mathbf{A}^k\bm{\widehat{x}}_0$ 
				and thus $\mathbb{E}[\bm{v}_j^\top\bm{\widehat{x}}_{k}] = \bm{v}_j^\top \mathbf{A}^k\bm{\widehat{x}}_0=\bm{v}_j^\top \mathbf{V}\mathbf{\Lambda}^k\mathbf{V}^\top\bm{\widehat{x}}_0$. The proof follows by noticing that $\bm{v}_j^\top \mathbf{V}$ is 
				identically zero except the $j$-th entry which is equal to one.
			\end{proof}
			
			\begin{thm}
				\label{thm:average}
				Let $\bm{\widehat{x}}_{k} = \frac{N}{\mathbb{E}[T]}\mathbf{D}_{\mathcal{T}_k}\mathbf{A}\bm{\widehat{x}}_{k-1}$, $\bm{\widehat{x}}_0\in \mathbb{R}^N$, such that $\bm{\widehat{x}}_0^\top \bm{v}_1 \neq 0$, where all outcomes of the random variables $T$ and ${\mathcal T}$ appear with equal probability. Then, for sufficiently large $k \in \mathbb{N}$ and $\lambda_j \in (-1,1),\ j=2,\ldots,N$, the vector $\mathbb{E}[\bm{\widehat{x}}_k]$ 
				and the eigenvector $\bm{v}_1$ are parallel.
			\end{thm}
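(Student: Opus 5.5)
The plan is to reduce the statement to the classical power iteration computation, using Proposition~\ref{pro1} (equivalently, the lemma that follows it). Proposition~\ref{pro1} gives $\mathbb{E}[\bm{\widehat{x}}_k]=\mathbf{A}^k\bm{\widehat{x}}_0$. Expanding in the orthonormal eigenbasis, as in the background section, and using $\lambda_1=1$, we get
\begin{equation*}
\mathbb{E}[\bm{\widehat{x}}_k]=(\bm{v}_1^\top\bm{\widehat{x}}_0)\,\bm{v}_1+\sum_{j=2}^N \lambda_j^k(\bm{v}_j^\top\bm{\widehat{x}}_0)\,\bm{v}_j .
\end{equation*}
The coefficients are exactly those given by the lemma, $\mathbb{E}[\bm{v}_j^\top\bm{\widehat{x}}_k]=\lambda_j^k\bm{v}_j^\top\bm{\widehat{x}}_0$.

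The key step is to bound the component of $\mathbb{E}[\bm{\widehat{x}}_k]$ orthogonal to $\bm{v}_1$. By orthonormality,
\begin{equation*}
\Big\|\sum_{j\ge2}\lambda_j^k(\bm{v}_j^\top\bm{\widehat{x}}_0)\bm{v}_j\Big\|\le |\lambda_2|^k\|\bm{\widehat{x}}_0\|.
\end{equation*}
This bound tends to $0$ geometrically, because $\lambda_j\in(-1,1)$ for $j\ge2$. Meanwhile the $\bm{v}_1$-coefficient stays fixed at $\bm{v}_1^\top\bm{\widehat{x}}_0\neq0$. Consequently the sine of the angle between $\mathbb{E}[\bm{\widehat{x}}_k]$ and $\bm{v}_1$ is bounded by $|\lambda_2|^k\|\bm{\widehat{x}}_0\|/|\bm{v}_1^\top\bm{\widehat{x}}_0|$, which tends to $0$. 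Equivalently, $\mathbb{E}[\bm{\widehat{x}}_k]/\|\mathbb{E}[\bm{\widehat{x}}_k]\|\to\pm\bm{v}_1$, and $\mathbb{E}[\bm{\widehat{x}}_k]\to(\bm{v}_1^\top\bm{\widehat{x}}_0)\bm{v}_1$.

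The main obstacle is interpretive. Taken literally, ``parallel for sufficiently large $k$'' fails in exact arithmetic whenever some $\lambda_j\neq0$ with $\bm{v}_j^\top\bm{\widehat{x}}_0\neq0$, since the orthogonal component is nonzero for every $k$. I would therefore state and prove the result in the asymptotic sense: the angle goes to zero at rate $|\lambda_2|^k$. I would also remark that exact parallelism holds for all $k$ beyond a finite index only in degenerate cases, for instance when $\lambda_j=0$ for all $j\ge2$ (then already at $k=1$) or when $\bm{\widehat{x}}_0$ has no components along eigenvectors with nonzero $\lambda_j$, $j\ge2$. Parallelism up to floating-point precision is reached once $|\lambda_2|^k$ falls below machine precision relative to $|\bm{v}_1^\top\bm{\widehat{x}}_0|/\|\bm{\widehat{x}}_0\|$, which matches the paper's stopping criterion for power iteration.
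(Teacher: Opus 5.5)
Your proposal is correct and follows the paper's route: Proposition~\ref{pro1} reduces $\mathbb{E}[\bm{\widehat{x}}_k]$ to $\mathbf{A}^k\bm{\widehat{x}}_0$, and convergence of classical power iteration then finishes the argument. The paper uses that convergence as a black box, taking $k$ to be the first index at which $\mathbf{A}^k\bm{\widehat{x}}_0=\alpha\bm{v}_1$ holds in floating-point arithmetic and checking $\bm{v}_j^\top\mathbb{E}[\bm{\widehat{x}}_k]=0$ for $j\ge 2$; your explicit $|\lambda_2|^k$ bound on the angle makes precise the sense in which the statement holds, and you are right that exact parallelism generally fails in exact arithmetic.
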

			\begin{proof}
				Since $\lambda_j \in (-1,1),\ j=2,\ldots,N$, we know that classical power iteration is guaranteed 
				to converge to $\bm{v}_1$. Let now $k$ denote the smallest integer for which $\mathbf{A}^k\bm{\widehat{x}}_0$ becomes parallel to $\bm{v}_1$ in a floating-point arithmetic environment, i.e, $\mathbf{A}^k\bm{\widehat{x}}_0 = \alpha \bm{v}_1,\ \alpha \neq 0$. Since $\mathbb{E}[\bm{\widehat{x}}_{k}] = \mathbf{A}^k\bm{\widehat{x}}_0$, it follows that for any $j=2,\ldots,N$:
				\begin{align*}
					\mathbb{E}[\bm{v}_j^\top\bm{\widehat{x}}_{k}] & = \bm{v}_j^\top\mathbb{E}[\bm{\widehat{x}}_{k}] =\bm{v}_j^\top \mathbf{A}^k\bm{\widehat{x}}_0= \alpha \bm{v}_j^\top \bm{v}_1=0.
				\end{align*}
				Thus, $\mathbb{E}[\bm{\widehat{x}}_{k}]$ is perpendicular to all non-dominant eigenvectors $\bm{v}_2,\ldots,\bm{v}_N$.
			\end{proof}
			
			Although $\mathbb{E}[\bm{v}_j^\top\bm{\widehat{x}}_{k}]$ now depends on the original modulus of $\lambda_2,\ldots,\lambda_N$, the approximation $\bm{\widehat{x}}_{k}$ of $\bm{v}_1$ has 
			-in expectation- $N$-$\mathbb{E}[T]$ entries of zero modulus. As a result, the approximation $\bm{\widehat{x}}_k$ produced by a single run of partial power iteration equipped with the 
			matrix-vector model in (\ref{updzero1}) can not generally converge to $\bm{v}_1$ regardless 
			of the value of $k$.

			\section{Variance of the two partial matrix-vector models} \label{sec:varr}
			
			The quality of the approximation of the expectation $\mathbb{E}[\bm{\widehat{x}}_{k}]$ by 
			the partial power iteration variants defined by the partial matrix-vector models 
			in (\ref{updzero2}) and (\ref{updzero1}) is closely related to the higher order statistics 
			of the random vector $\bm{\widehat{x}}_k$. In this section, we present some basic calculations 
			related to the covariance associated with the update models in (\ref{updzero2}) and 
			(\ref{updzero1}). 
			
			\subsection{General Approach}
			
			For any $k\in \mathbb{N}$, calculating the covariance matrix $\cov[\bx]$ for an $N$-variate distribution $\bx$ is reduced to the calculations of the matrix $\ex[\bx \bx^\top]$ since
			\begin{equation*}
				\cov[\bm{x}]=\ex[(\bx-\ex[\bx])(\bx-\ex[\bx])^\top]=\ex[\bx \bx^\top]- \ex[\bx]\ex[\bx]^\top.
			\end{equation*}
			For this purpose, let $\mathbf{Q}_k$ denote the random matrix at iteration $k$ in~\eqref{updzero2} 
			and~\eqref{updzero1}, \emph{i.e.}, $\mathbf{Q}_k=\mathbf{I}-\mathbf{D}_{\mathcal{T}_k}(\mathbf{A}-\mathbf{I})$ for~\eqref{updzero2} and $\mathbf{Q}_k= \frac{N}{\ex[T]}\mathbf{D}_{\mathcal{T}_k}\mathbf{A}$ for~\eqref{updzero1}. Throughout this section we generically write the former as 
			\begin{equation*}
				\bm{\widehat{x}}_{k}=\mathbf{Q}_k \bm{\widehat{x}}_{k-1}.
			\end{equation*}
			\begin{lem}\label{lem:cov_general}
				Following the above definition of $\bm{\widehat{x}}_{k}$, we have
				\begin{align}
					\label{eqn:variance_calc}\ex[\bm{\widehat{x}}_{k}\bm{\widehat{x}}_{k}^\top]=\left(\prod_{q=1}^k\ex[\mathbf{S}_q]\right) (\bm{\widehat{x}}_{0}\bm{\widehat{x}}_{0}^\top),
				\end{align}
				where $\quad \mathbf{S}_q=\mathbf{Q}_q\otimes \mathbf{Q}_q$ and $\otimes$ denotes the tensor product (Kronecker product). In the case that $\mathbf{Q}_k$ are i.i.d, we have, $\ex[\bm{\widehat{x}}_{k}\bm{\widehat{x}}_{k}^\top]=\ex[S]^k (\bm{\widehat{x}}_{0}\bm{\widehat{x}}_{0}^\top)$, with $S=\mathbf{Q}_1\otimes \mathbf{Q}_1$.
			\end{lem}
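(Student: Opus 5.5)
The plan is to identify an $N\times N$ matrix $\mathbf{M}$ with its vectorization $\mathrm{vec}(\mathbf{M})\in\mathbb{R}^{N^2}$. Then $(\mathbf{Q}\otimes\mathbf{Q})$ acts on $\mathbf{M}$ through the standard identity
\[
\mathrm{vec}(\mathbf{Q}\mathbf{M}\mathbf{Q}^\top)=(\mathbf{Q}\otimes\mathbf{Q})\,\mathrm{vec}(\mathbf{M}).
\]
This is the sense in which the right-hand side of \eqref{eqn:variance_calc} is to be read: the operator $\prod_{q}\ex[\mathbf{S}_q]$ is applied to $\bm{\widehat{x}}_0\bm{\widehat{x}}_0^\top$. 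With this convention, the one-step relation $\bm{\widehat{x}}_k=\mathbf{Q}_k\bm{\widehat{x}}_{k-1}$ gives
\[
\bm{\widehat{x}}_k\bm{\widehat{x}}_k^\top=\mathbf{Q}_k\bigl(\bm{\widehat{x}}_{k-1}\bm{\widehat{x}}_{k-1}^\top\bigr)\mathbf{Q}_k^\top=\mathbf{S}_k\bigl(\bm{\widehat{x}}_{k-1}\bm{\widehat{x}}_{k-1}^\top\bigr).
\]
Unrolling this relation yields $\bm{\widehat{x}}_k\bm{\widehat{x}}_k^\top=\mathbf{S}_k\mathbf{S}_{k-1}\cdots\mathbf{S}_1(\bm{\widehat{x}}_0\bm{\widehat{x}}_0^\top)$. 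This uses the mixed-product property $(\mathbf{Q}_k\otimes\mathbf{Q}_k)(\mathbf{Q}_{k-1}\otimes\mathbf{Q}_{k-1})=(\mathbf{Q}_k\mathbf{Q}_{k-1})\otimes(\mathbf{Q}_k\mathbf{Q}_{k-1})$, or simply repeated application of the identity above.

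Next I take expectations. The key input is that the random index sets $\mathcal{T}_1,\ldots,\mathcal{T}_k$ are drawn independently across iterations, as assumed throughout the setting, for instance in the proof of Proposition~\ref{pro1}. Hence $\mathbf{S}_1,\ldots,\mathbf{S}_k$ are independent random $N^2\times N^2$ matrices, and $\bm{\widehat{x}}_0$ is deterministic. The expectation of a product of independent random matrices equals the product of their expectations; this follows entrywise by expanding the product and using independence on each monomial. Therefore
\[
\ex[\bm{\widehat{x}}_k\bm{\widehat{x}}_k^\top]=\ex[\mathbf{S}_k]\cdots\ex[\mathbf{S}_1](\bm{\widehat{x}}_0\bm{\widehat{x}}_0^\top),
\]
which is \eqref{eqn:variance_calc}, with the product ordered from $q=k$ on the left down to $q=1$. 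An alternative route is induction with conditioning: write $\ex[\mathbf{S}_k\,\cdot\,]=\ex[\ex_k[\mathbf{S}_k](\bm{\widehat{x}}_{k-1}\bm{\widehat{x}}_{k-1}^\top)]$, using that $\mathbf{S}_k$ is independent of $\bm{\widehat{x}}_{k-1}$ and that the action is linear, then apply the induction hypothesis. For the i.i.d.\ case, every $\ex[\mathbf{S}_q]$ equals $\ex[S]$ with $S=\mathbf{Q}_1\otimes\mathbf{Q}_1$, so the product collapses to $\ex[S]^k$.

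The only real subtlety is notational rather than mathematical. The statement writes the operator as acting on a matrix, so I must fix the vec/Kronecker convention (column-major $\mathrm{vec}$ with $\mathrm{vec}(\mathbf{A}\mathbf{X}\mathbf{B})=(\mathbf{B}^\top\otimes\mathbf{A})\mathrm{vec}(\mathbf{X})$, specialized to $\mathbf{B}=\mathbf{Q}^\top$). I also need to note that the product is non-commutative, so it must be ordered with later indices on the left. Beyond this, the argument is routine linearity plus independence.
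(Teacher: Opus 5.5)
Your proposal is correct and follows essentially the paper's argument. The paper also identifies $\mathbf{M}\mapsto\mathbf{Q}\mathbf{M}\mathbf{Q}^\top$ with the $N^2\times N^2$ operator $\mathbf{Q}\otimes\mathbf{Q}$ acting on vectorized matrices, and uses independence through nested expectations $\ex_k[\mathbf{Q}_k\ex_{k-1}[\cdots]\mathbf{Q}_k^\top]$, which is your conditioning alternative; your explicit vec convention and the ordering of the non-commuting factors with later indices on the left make precise what the paper leaves implicit.
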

			\begin{proof}
				\begin{align*}
					\ex[\bm{\widehat{x}}_{k}\bm{\widehat{x}}_{k}^\top] & = \ex\left[\left(\prod_{q=1}^k\mathbf{Q}_q\right) \bm{\widehat{x}}_{0}\,\, \left(\left(\prod_{q=1}^k\mathbf{Q}_q\right)\bm{\widehat{x}}_{0}\right)^\top\right]
					\\ &= \ex\left[\left(\prod_{q=1}^k\mathbf{Q}_q\right) \bm{\widehat{x}}_{0}\bm{\widehat{x}}_{0}^\top \left(\prod_{q=1}^k\mathbf{Q}_q\right)^\top \right]
					\\&= \ex[\mathbf{Q}_k\cdots \mathbf{Q}_2 \mathbf{Q}_1\bm{\widehat{x}}_{0}\bm{\widehat{x}}_{0}^\top  \mathbf{Q}_1^\top \mathbf{Q}_2^\top \cdots \mathbf{Q}_k^\top]
					\\&= \ex_k[ \mathbf{Q}_k\ex_{k-1}[\cdots  \ex_2[\mathbf{Q}_2\ex_1 [\mathbf{Q}_1\bm{\widehat{x}}_{0}\bm{\widehat{x}}_{0}^\top \mathbf{Q}_1^\top] \mathbf{Q}_2^\top] \cdots ]\mathbf{Q}_k^\top],
				\end{align*}
				where $\ex_q$, $q=1, 2,\ldots, k$ denote the expectations taken with respect to $\mathbf{Q}_q $, $q=1, 2,\ldots, k$.
				
				For any $N\times N$ matrix $\mathbf{B}$, $\mathbf{A}\mapsto \mathbf{B}\mathbf{A}\mathbf{B}^\top$ is a linear transformation with respect to $\mathbf{A}$, which can be viewed as vectors in $\Real^{N^2}$. A linear transformation on $\Real^{N^2}$ is then represented by a $N^2\times N^2$ matrix. Straightforward calculations lead to that 
				$\mathbf{B}\mathbf{A}\mathbf{B}^\top= \mathbf{SA}$, with $\mathbf{S}=\mathbf{B}\otimes \mathbf{B}$ being the Kronecker product of $\mathbf{B}$ and itself, an $N^2\times N^2$ matrix with $\mathbf{S}_{(i,j),(k, \ell)} =\mathbf{B}_{ik}\mathbf{B}_{\ell j}$ for $i,j,k, \ell=1,\ldots,N$, and $\mathbf{A}$ being treated as a $N^2$ dimensional vector. In conjunction with the independence assumption, we obtain~\eqref{eqn:variance_calc}.
			\end{proof}
			
			Lemma~\ref{lem:cov_general} enables us to calculate the variances for different update algorithms, including those that have random selection that changes over time (depending on time, not the state), which might be called “update according to a schedule'', similar to analogous procedures in optimization and learning theory.

			In the following we shall use:
			\begin{align}\label{eqn:Sigma Ip}
				\mathbf{\Sigma}:=
				\left(\begin{array}{c} \mathbf{A}_1\otimes \mathbf{A}_1\\\bm{e}_1\otimes \mathbf{A}_2+\mathbf{A}_1\otimes \bm{e}_2\\ \vdots \\ \bm{e}_1\otimes \mathbf{A}_N+\mathbf{A}_1\otimes \bm{e}_N\\ \bm{e}_2\otimes \mathbf{A}_1+\mathbf{A}_2\otimes \bm{e}_1\\\mathbf{A}_2\otimes \mathbf{A}_2\\ \vdots \\ \bm{e}_2\otimes \mathbf{A}_N+\mathbf{A}_2\otimes \bm{e}_N \\ \vdots \\ \bm{e}_N\otimes \mathbf{A}_1+\mathbf{A}_N\otimes \bm{e}_1\\ \vdots \\ \mathbf{A}_N\otimes \mathbf{A}_N\end{array}\right)
				\quad I':=\left(\begin{array}{c} \bm{A}_1\otimes \bm{A}_1\\0\\ \vdots \\ 0\\ 0\\\bm{A}_2\otimes \bm{A}_2\\ \vdots \\ 0 \\ \vdots \\ 0\\0\\ \vdots \\ \bm{A}_N\otimes \bm{A}_N\end{array}\right).
				\,,
			\end{align}
			
			\subsection{Covariance of the update (\ref{updzero2})}\label{subsec: CovUpdate1}
			
			Applying Lemma~\ref{lem:cov_general} to update~\eqref{updzero2}, we can get the the matrix multiplier for the covariance matrix calculations. For ease of exposition, write
			\begin{align*}
				\mathbf{A}=\left(\begin{array}{c} \bm{A}_1\\\bm{A}_2\\ \vdots \\ \bm{A}_N \end{array}\right), \quad \hbox{$\bm{A}_i=\bm{A}_{i:}$ the $i$-th row of $\mathbf{A}$.}
			\end{align*}
			Therefore, for the covariance of a single update we have 
				\begin{equation}\label{equ:cov1}
					\ex[\mathbf{S}]=\frac{\ex[T]}{N}\left(1-\frac{\ex[T]}{N}\right)
					\left(-\frac{1}{N-1} \mathbf{A}\otimes \mathbf{A}+ \mathbf{I}\otimes \mathbf{I}+\frac{N}{N-1}\mathbf{\Sigma}- (\mathbf{I}\otimes \mathbf{A}+\mathbf{A}\otimes \mathbf{I})\right)\,.
				\end{equation}
			
			Detailed calculations can be found in the supplement~\ref{appendix_b}. 
			
			\subsection{Covariance of the update (\ref{updzero1})}
			
			Following the same technique as above we can repeat the same computations for the update 
			defined by equation~\eqref{updzero1}. In this case, the covariance is 
				\begin{equation}\label{equ:cov2}
					\ex[\mathbf{S}] =\frac{N}{\ex[T]}\frac{N}{N-1}\left(\left(\frac{\ex[T]}{N}-\frac{1}{N}\right)\mathbf{A}\otimes \mathbf{A} + \left(1-\frac{\ex[T]}{N}\right) I'\right).
				\end{equation}
			As pointed out in the statement of Lemma~\ref{lem:cov_general}, the second moment matrix of the iterates $\bm{\widehat{x}}_k$ itself develops as power iterations of the $N^2\times N^2$ matrix $\ex[\mathbf{S}]$. We observe from the covariance of update~\eqref{updzero1} that while the coefficient for the first term is roughly constant (around one) as the ratio of update $\ex[T]/N$ varies, the coefficient for the second term is of the order $N/\ex[T]$. The variance thus scales inversely with the proportion of the rows updated, and so our preferred regime of a small proportion of updates will lead to large variance. Given Theorem~\ref{thm:average}, the variance can be seen as a surrogate of the deviation from the limit.\footnote{This is verified by the numerical results reported in Section \ref{sec:experiments} where we demonstrates this effect when $\ex[T]/N$ is small.} In comparison, when the above analysis is applied to update~\eqref{updzero2}, the corresponding coefficients are roughly $ \ex[T]/N(1-\ex[T]/N)$, which is at most $1/4$. Furthermore, if we vary the value of $\ex[T]/N$, then the Frobenius norm of $\ex[\mathbf{S}]$ of the update (\ref{updzero2}) remains bounded since each term of the summand will have a bounded Frobenius norm, while its counterpart in (\ref{updzero1}) tends to infinity as $\mathbb{E}[T]/N$ tends to zero. This is depicted in Figure~\ref{fig:Cpv12}, where we report the normalized (by $N^2$) Frobenius norms of the covariance matrix in~\eqref{equ:cov1} and~\eqref{equ:cov1}, respectively, for a randomly generated matrix $\mathbf{A}$ and $\mathbb{E}[T]/N$ values ranging from $1/N$ to $1$.
			\begin{figure}
				\includegraphics[scale=0.5]{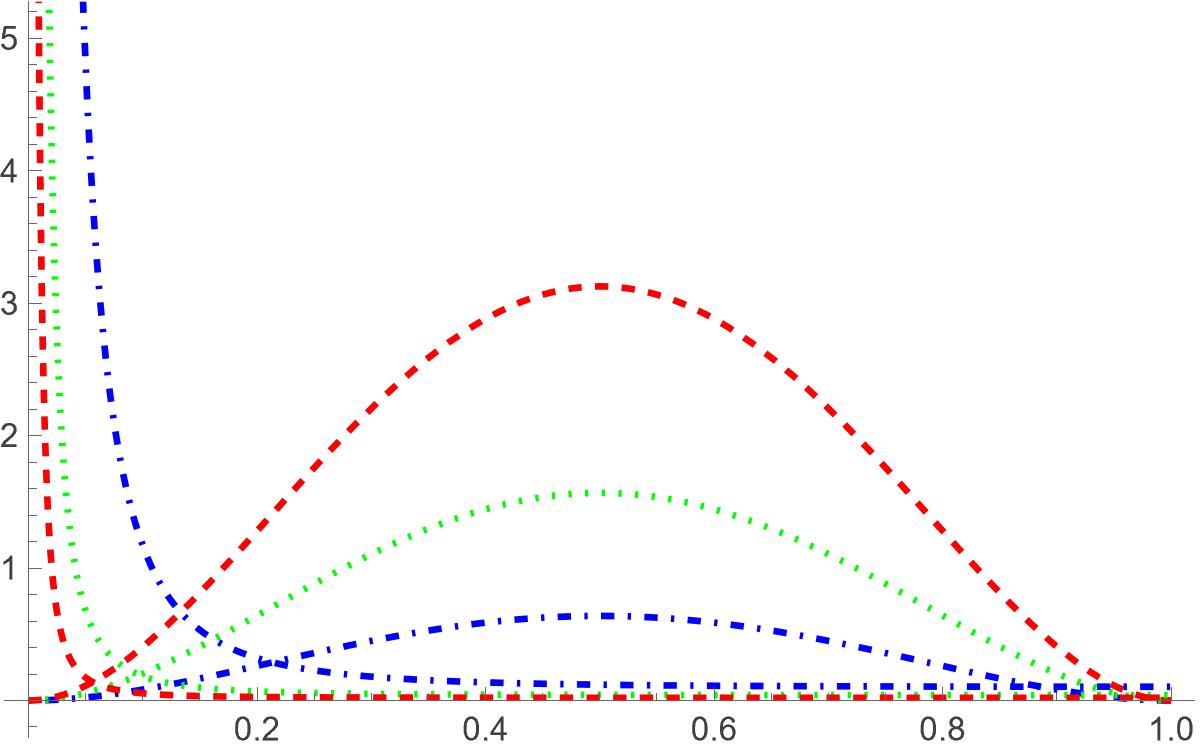}
				\caption{Dependence of the Frobenius norm (Squared, normalized by $N^2$) of the  covariance matrices.  Dashed red for $N=50$, Dotted green for $N=25$ and Dot-dashed blue for $N=10$. The symmetric, unimodal graph corresponds to \eqref{equ:cov1} while the one with asymptote at 0 corresponds to \eqref{equ:cov2}.  The $x$-axis runs over $\mathbb{E}[T]/N$ for $1\leq \mathbb{E}[T]\leq N$.}
				\label{fig:Cpv12}
			\end{figure}

			\section{Probabilistic switching between the two matrix-vector models}\label{sec:switch_algo}
			
			The previous section demonstrated that the update model (\ref{updzero1}) introduces larger 
			variance than the update model (\ref{updzero2}) as a direct consequence of setting to zero 
			$N-T_k$ entries of $\bm{\widehat{x}}_k$ at iteration $k$. On the other hand, Section \ref{sec:newm} 
			showed that, in contrast to (\ref{updzero2}), updating $\bm{\widehat{x}}_k$ as in (\ref{updzero1}) 
			leads to an expected inner product $\mathbb{E}[\bm{v}_j^\top\bm{\widehat{x}}_{k}]$ that depends on the 
			original spectral gap of the matrix $\mathbf{A}$ instead the (smaller) spectral gap of the expectation 
			matrix $\frac{\mathbb{E}[T]}{N}\mathbf{A} + \frac{N-\mathbb{E}[T]}{N}\mathbf{I}$.

			\subsection{A mixture of models}

			In this section we consider the combination of the update models (\ref{updzero2}) and 
			(\ref{updzero1}).
			
			\begin{pro}\label{pro2}
				Let $k=k_1+k_2,\ k_1,k_2\in \mathbb{N}$, and for an appropriate initial approximation $\bm{\widehat{x}}_{0}\in \mathbb{R}^N$ 
				consider $k_1$ iterations of the model (\ref{updzero1}), followed by $k_2$ iterations of the model (\ref{updzero2}). Then, 
				\begin{equation*}
					\mathbb{E}[\bm{v}_j^\top \bm{\widehat{x}}_{k}] = 
					\lambda_j^{k_1}\left[1+\frac{\mathbb{E}[T]}{N}(\lambda_j-1)\right]^{k_2}  \bm{v}_j^\top \bm{\widehat{x}}_{0}.
				\end{equation*}
			\end{pro}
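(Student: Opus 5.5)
The plan is to write the composite iteration as a single product of independent random matrices and exploit independence across iterations, exactly as in the proof of Proposition~\ref{pro1}. Concretely, after $k=k_1+k_2$ steps we have
\begin{equation*}
\bm{\widehat{x}}_{k} = \left(\prod_{q=k_1+1}^{k}\left[\mathbf{I}+\mathbf{D}_{\mathcal{T}_q}(\mathbf{A}-\mathbf{I})\right]\right)\left(\frac{N}{\mathbb{E}[T]}\right)^{k_1}\mathbf{D}_{\mathcal{T}_{k_1}}\mathbf{A}\cdots\mathbf{D}_{\mathcal{T}_1}\mathbf{A}\,\bm{\widehat{x}}_0,
\end{equation*}
where the product over $q$ is ordered with later indices on the left. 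Since the samples $\mathcal{T}_1,\ldots,\mathcal{T}_k$ are drawn independently, the expectation of the product factors into the product of expectations of the individual factors.

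Next I will evaluate each factor's expectation using the uniform sampling assumption. The key identity is $\mathbb{E}[\mathbf{D}_{\mathcal{T}}]=\frac{\mathbb{E}[T]}{N}\mathbf{I}$. It holds because, conditional on $T$, each index $i$ lies in $\mathcal{T}$ with probability $T/N$, and averaging over $T$ gives $\mathbb{E}[T]/N$. This yields $\frac{N}{\mathbb{E}[T]}\mathbb{E}[\mathbf{D}_{\mathcal{T}_q}\mathbf{A}]=\mathbf{A}$ for the first $k_1$ factors, as in Proposition~\ref{pro1}. For the remaining $k_2$ factors it yields $\mathbb{E}[\mathbf{I}+\mathbf{D}_{\mathcal{T}_q}(\mathbf{A}-\mathbf{I})]=\frac{\mathbb{E}[T]}{N}\mathbf{A}+\frac{N-\mathbb{E}[T]}{N}\mathbf{I}$, which is the expected matrix from Section~\ref{sec:async}. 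Therefore
\begin{equation*}
\mathbb{E}[\bm{\widehat{x}}_{k}] = \left[\frac{\mathbb{E}[T]}{N}\mathbf{A}+\frac{N-\mathbb{E}[T]}{N}\mathbf{I}\right]^{k_2}\mathbf{A}^{k_1}\bm{\widehat{x}}_0 .
\end{equation*}

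Finally, I will project onto $\bm{v}_j$. Both matrices are polynomials in the symmetric matrix $\mathbf{A}$, so they share the eigenvectors $\bm{v}_j$. Using $\bm{v}_j^\top\mathbf{A}=\lambda_j\bm{v}_j^\top$ and linearity of expectation,
\begin{equation*}
\mathbb{E}[\bm{v}_j^\top\bm{\widehat{x}}_k]=\bm{v}_j^\top\mathbb{E}[\bm{\widehat{x}}_k]=\left[1+\frac{\mathbb{E}[T]}{N}(\lambda_j-1)\right]^{k_2}\lambda_j^{k_1}\,\bm{v}_j^\top\bm{\widehat{x}}_0 ,
\end{equation*}
which is the claimed formula. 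Equivalently, one can condition on the first $k_1$ samples, apply \eqref{eqn:vjxk} with the random starting vector $\bm{\widehat{x}}_{k_1}$ (which is independent of the later samples), and then apply the lemma following Proposition~\ref{pro1} for the first phase.

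The only step that needs care is justifying the factorization of the expectation. I will handle it through the tower property, conditioning successively on $\mathcal{T}_1,\ldots,\mathcal{T}_{q-1}$. This works because each factor is independent of all earlier ones and the iteration is linear in the current iterate. The rest of the argument is routine algebra.
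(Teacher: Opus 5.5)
Your proposal is correct and follows essentially the same route as the paper: write $\bm{\widehat{x}}_k$ as the product of the $k_2$ asynchronous factors and the $k_1$ zero-filling factors, factor the expectation by independence to get $\left[\frac{\mathbb{E}[T]}{N}\mathbf{A}+\frac{N-\mathbb{E}[T]}{N}\mathbf{I}\right]^{k_2}\mathbf{A}^{k_1}\bm{\widehat{x}}_0$, and then project onto $\bm{v}_j$. Your justifications of $\mathbb{E}[\mathbf{D}_{\mathcal{T}}]=\frac{\mathbb{E}[T]}{N}\mathbf{I}$ and of the factorization via the tower property, together with your cleaner indexing of the samples, supply details that the paper leaves implicit.
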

			\begin{proof}
				The approximation of $\bm{v}_1$ after $k$ iterations can be written as 
				\begin{equation*}
					\bm{\widehat{x}}_{k} = \prod_{j=1}^{k_2}\left[\mathbf{I}+\mathbf{D}_{\mathcal{T}_{j}}(\mathbf{A}-\mathbf{I})\right] \prod_{i=1}^{k_1}\frac{N}{\mathbb{E}[T]}\mathbf{D}_{\mathcal{T}_{i}}\mathbf{A}\bm{\widehat{x}}_{0}.
				\end{equation*}
				Due to the independence and identical distribution of each matrix sample, applying the expectation operator to both sides results to
				\begin{align*}
					\mathbb{E}[\bm{\widehat{x}}_{k}] &= \mathbb{E}\left[\prod_{j=1}^{k_2}\left[\mathbf{I}+\mathbf{D}_{\mathcal{T}_{j}}(\mathbf{A}-\mathbf{I})\right]\right] \mathbb{E}\left[\prod_{i=1}^{k_1}\frac{N}{\mathbb{E}[T]}\mathbf{D}_{\mathcal{T}_{i}}\mathbf{A}\right]\bm{\widehat{x}}_{0}\\
					&=    \left[\frac{\mathbb{E}[T]}{N}\mathbf{A} + \frac{N-\mathbb{E}[T]}{N}\mathbf{I}\right]^{k_2} \mathbf{A}^{k_1}
					\bm{\widehat{x}}_{0}.
				\end{align*}
				The proof concludes by multiplying both sides from the left by $\bm{v}_j^\top$ and recalling that $\bm{v}_j^\top V$ is identically zero except for the $j$-th entry which is equal to one.
			\end{proof}
			Proposition \ref{pro2} shows that by combining the two update models we can mitigate the 
			drawback of a large modulus $1+\frac{\mathbb{E}[T]}{N}(\lambda_j-1)$. Indeed, consider the 
			case where $\lambda_j = 0$ and thus $1+\frac{\mathbb{E}[T]}{N}(\lambda_j-1) = \frac{N-\mathbb{E}[T]}{N}$. For very small values of $\mathbb{E}[T]$, the modulus of $\frac{N-\mathbb{E}[T]}{N}$ 
			is approximately one and $k_2$ needs to be quite large to reduce the modulus of 
			$\left[1+\frac{\mathbb{E}[T]}{N}(\lambda_j-1)\right]^{k_2}$. On the other hand, using the  
			model (\ref{updzero1}) just once will -in expectation- nullify $\mathbb{E}[\bm{v}_j^\top \bm{\widehat{x}}_{k}]$. In the more general case where $\lambda_j \approx 0$, applying just a very 
			small number of $k_1$ iterations of the update model (\ref{updzero1}) can reduce $\mathbb{E}[\bm{v}_j^\top \bm{\widehat{x}}_{k}]$ much faster than the model (\ref{updzero2}) alone. 
			
			\begin{cor}
				Let $|\lambda_1|=1$ and $k=k_1+k_2,\ k_1,k_2\in \mathbb{N}$. For any $\lambda_j \in (-1,1)$, 
				\begin{equation*}
					\lim_{k_1 \rightarrow \infty || k_2 \rightarrow \infty}\lambda_j^{k_1} 
					\left[1+\frac{\mathbb{E}[T]}{N}(\lambda_j-1)\right]^{k_2} = 0.
				\end{equation*}
			\end{cor}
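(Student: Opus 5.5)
The plan is to bound the modulus of the product by the product of the moduli of the two factors, show that each factor is a fixed real number in $[0,1]$ with at least one of them strictly less than one, and then let whichever exponent diverges drive the product to zero. I read the limit ``$k_1\rightarrow\infty \,||\, k_2\rightarrow\infty$'' as: for every $\epsilon>0$ there is $K$ such that the expression is below $\epsilon$ in modulus whenever $\max(k_1,k_2)\ge K$.

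Write $p:=\mathbb{E}[T]/N\in[1/N,1]$. This holds because $T_k\in[\gamma_1,\gamma_2]\subseteq[1,N]$.
\begin{itemize}
\item \textbf{First factor.} Set $a:=|\lambda_j|$. Since $\lambda_j\in(-1,1)$, we have $a<1$.
\item \textbf{Second factor.} Set $\mu_j:=1+p(\lambda_j-1)=(1-p)+p\lambda_j$. This is a convex combination of $1$ and $\lambda_j$ with weight $p>0$ on $\lambda_j$. Because $\lambda_j>-1$, we get $\mu_j>(1-p)-p=1-2p\ge -1$. Because $\lambda_j<1$, we get $\mu_j<1$. Hence $b:=|\mu_j|<1$.
\end{itemize}
The bound $\mu_j>-1$ is the one delicate point, since it needs both $p\le 1$ and $\lambda_j>-1$. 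It is also exactly the step that would fail if $\lambda_j=-1$ were allowed.

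Next, observe that
\begin{equation*}
\left|\lambda_j^{k_1}\mu_j^{k_2}\right| = a^{k_1}b^{k_2}\le c^{\max(k_1,k_2)}, \qquad c:=\max(a,b)<1,
\end{equation*}
because both $a^{k_1}\le 1$ and $b^{k_2}\le 1$. Given $\epsilon>0$, choose $K$ with $c^K<\epsilon$ (any $K$ works if $c=0$). Then the bound holds whenever $k_1\ge K$ or $k_2\ge K$, which proves the limit in the stated disjunctive sense. In particular it also covers the two one-sided limits with the other index held fixed or varying arbitrarily.

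The main obstacle is that the statement does not make the limit notation precise. I will therefore state the interpretation above explicitly at the start of the proof. The only other issue is the lower bound $\mu_j>-1$, which I will verify as shown in the second bullet.
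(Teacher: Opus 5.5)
Your proof is correct and uses the same idea as the paper's one-line justification: both $|\lambda_j|$ and $\bigl|1+\frac{\mathbb{E}[T]}{N}(\lambda_j-1)\bigr|$ are strictly less than one, so whichever exponent diverges drives the product to zero. You go beyond the paper in two ways: you explicitly verify the lower bound $1+\frac{\mathbb{E}[T]}{N}(\lambda_j-1)>-1$, which the paper only asserts, and you make the disjunctive limit precise through the uniform bound $c^{\max(k_1,k_2)}$.
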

			The above corollary indicates that the product $\lambda_j^{k_1} \left[1+\frac{\mathbb{E}[T]}{N}(\lambda_j-1)\right]^{k_2}$ converges to zero as long as either $k_1$ or $k_2$ increase. Indeed, since the modulus of both $\lambda_j$ and $1+\frac{\mathbb{E}[T]}{N}(\lambda_j-1)$ is strictly 
			upper-bounded by one, at least one part of the product is guaranteed to converge to zero as $k$ increases.
			
			While combining (\ref{updzero2}) and (\ref{updzero1}) can reduce $\mathbb{E}[\bm{v}_j^\top \bm{\widehat{x}}_{k}]$ faster, it introduces two parameters, \emph{i.e.}, $k_1$ and $k_2$. Moreover, while in Proposition \ref{pro2} we assumed that the $k_1$ iterations of the model (\ref{updzero1}) precede the $k_2$ iterations of the model (\ref{updzero2}), the presented theoretical result remain valid 
			regardless of the order in which these $k_1+k_2$ iterations\footnote{The variance, on the other hand, varies as this order changes.} occur. Since we have the liberty of choosing this order, we 
			aim to bias the selection of the update model (\ref{updzero1}) in the beginning of the process where the approximate eigenvector $\bm{\widehat{x}}_{k}$ is not yet a highly accurate approximation of $\bm{v}_1$ and thus its bias towards zero is less harmful. To avoid explicitly setting the parameters $k_1$ and $k_2$, we consider a probabilistic framework where the choice of the update model at iteration $k$ depends on biased probability sampling.

			\subsection{Summary of the proposed algorithm} \label{algg}
			
			Algorithm \ref{alg1} summarizes the approach proposed in this section. Given a random initial approximation $\bm{\widehat{x}}_0 \in \mathbb{R}^N$ of the eigenvector $\bm{v}_1$ of the matrix $\mathbf{A}$, 
			the algorithm updates $\bm{\widehat{x}}_{k-1}$ to $\bm{\widehat{x}}_k$ by choosing between the two update formulas (\ref{updzero2}) and (\ref{updzero1}). Following the previous section, as well as the discussion in the last paragraph in Section \ref{sec:varr}, we want to leverage the update in (\ref{updzero1}) during the initial phase of partial power iteration followed by a switch to the update formula in (\ref{updzero2}). 
			
			The choice of either of these two update formulas is realized by sampling a scalar $\rho \in \mathbb{R}$ from the class of uniformly distributed random numbers between zero and one, and comparing the latter against $[\bm{\pi}]_k \in \mathbb{R}$ where the vector $\bm{\pi} \in \mathbb{R}^K$ is predetermined and passed to Algorithm \ref{alg1}. Herein, the integer $K\in \mathbb{N}$ denotes an upper bound on the number of iterations performed by Algorithm \ref{alg1}. Let the entries of $\bm{\pi}$ set such that $[\bm{\pi}]_k > [\bm{\pi}]_{k+1}$. We pick the update in (\ref{updzero1}) if $\rho < [\bm{\pi}]_k$ and the update (\ref{updzero2}) otherwise. Naturally, if $[\bm{\pi}]_k$ is close to one, then it is more likely to pick the update in (\ref{updzero1}) at iteration $k$. On the other hand, if $[\bm{\pi}]_k$ is close to zero, then it is more likely to pick the update in (\ref{updzero2}) at the same iteration. Therefore, devising a vector $\bm{\pi}$ with larger values in its first entries yields the desired outcome of choosing the update in (\ref{updzero1}) with higher probability during the early stages of power iteration.\footnote{The switching mechanism introduces negligible additional computational overhead. At each iteration, our implementation only evaluates a scalar probability, draws one Bernoulli random variable, and then applies one of the two already-defined partial matrix-vector update rules. Therefore, the dominant cost remains the computation and communication associated with the observed entries of the partial matrix-vector product.}
			
			We consider two probabilistic options to choose between the matrix-vector models  (\ref{updzero2}) and (\ref{updzero1}), both acting as a simulated Bernoulli trial. The first option sets $[\bm{\pi}]_k = e^{-\zeta(k-1)}$ where $\zeta$ controls the rate at which the probability of selecting the zero-filling update decays. The entries of the vector $\bm{\pi}$ decay exponentially as $k$ increases, starting from $[\bm{\pi}]_1=1$. 
			Our default choice is $\zeta = \frac{N-\mathbb{E}[T]}{2N}$ so that the decay rate is proportional to the expected fraction of non-updated entries. When only a small fraction of the matrix-vector product is updated, the algorithm moves away from (\ref{updzero1}), whose variance can be large. On the other hand, when more entries are updated, the decay is slower (\ref{updzero1}) can be used for more iterations.
			The second option sets $[\bm{\pi}]_k=\frac{1}{1+e^{\alpha(k-\beta)}}$ for some real scalars $\alpha$ and $\beta$. This option corresponds to a (logistic) sigmoid function and maps $[\bm{\pi}]_k\in [0,1]$ for any $k$. Here, $\beta$ determines the transition point along the real axis, i.e., $[\bm{\pi}]_\beta=1/2$, while $\alpha$ controls the sharpness (slope) of the transition. As a practical default, one can choose $\beta=\left\lceil \frac{2N}{N-\mathbb{E}[T]} \right\rceil$ 
				so that the transition from (\ref{updzero1}) to (\ref{updzero2}) occurs earlier when a large fraction of entries is non-updated and later when most entries are updated. In our experimental framework, the range $\alpha\in[0.3,1]$ has been found to produce the best empirical results, with $\alpha=0.65$ as a default.

			\begin{figure*}[ht]
				\centering
				\includegraphics[width=0.47\linewidth]{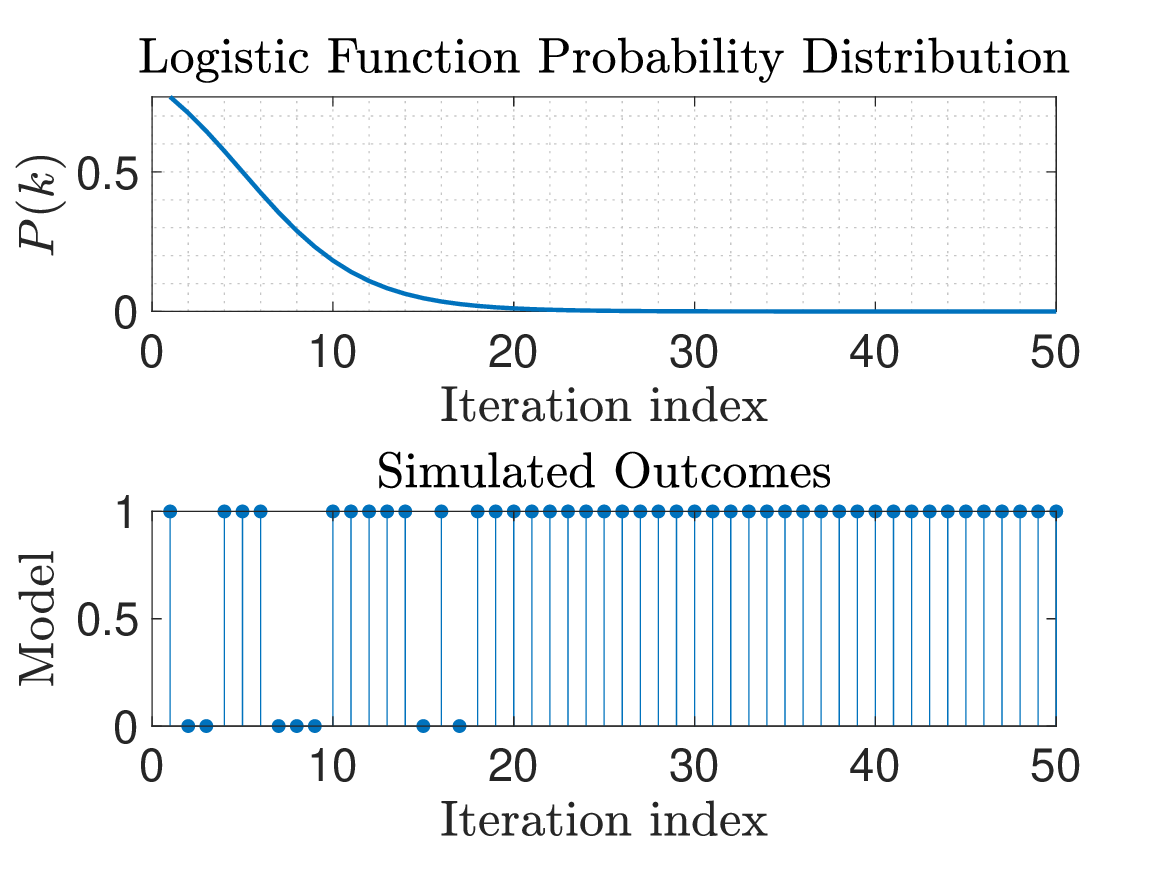}
				\includegraphics[width=0.47\linewidth]{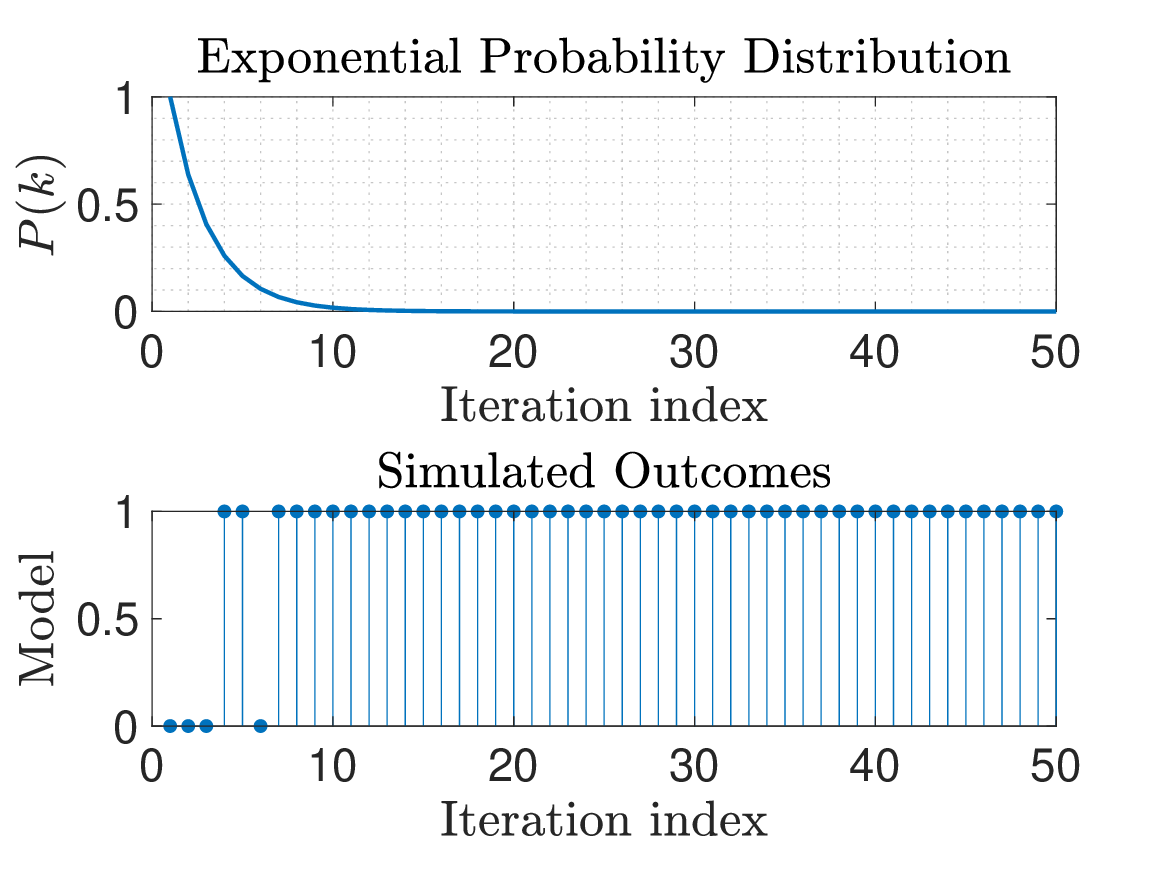}  
				\caption{{\it Top, left: model probability $P(k)\equiv [\bm{\pi}]_k=\frac{1}{1+e^{\alpha(k-\beta)}}$ where $\alpha=.3,\ \beta=5$. Top, right: model probability $P(k)\equiv [\bm{\pi}]_k=e^{-\zeta(k-1)}$ where $\zeta=\frac{N-\mathbb{E}[T]}{2N}$. Bottom: simulated model selection in the range $k\in [1,50]$ where '0' implies (\ref{updzero1}) and '1' implies (\ref{updzero2}).}}\label{fig1}
			\end{figure*}
			
			Figure \ref{fig1} (top) plots the modulus of the $k$-th entry of $\bm{\pi},\ k\in [1,50]$, 
			for the option $[\bm{\pi}]_k=\frac{1}{1+e^{\alpha(k-\beta)}}$ with $\alpha=.3,\ \beta=5$, 
			and the option $[\bm{\pi}]_k=e^{-\zeta(k-1)}$ with $\zeta=\frac{N-\mathbb{E}[T]}{2N}$. The 
			bottom part of the plots shows the actual model chosen at iteration $k$ after sampling 
			$\rho \in (0,1)$ and comparing against $[\bm{\pi}]_k$. Note that it is possible to switch back and forth updates 
			(\ref{updzero1}) and (\ref{updzero2}), nonetheless eventually the entries of $\bm{\pi}$ will 
			decay enough to make the switch to (\ref{updzero2}) permanent in all likelihood.

			\begin{algorithm}
				\caption{Power iteration with probabilistic matrix-vector model selection} \label{alg1}
				\begin{algorithmic}[1]
					\renewcommand{\algorithmicrequire}{\textbf{Input:}}
					\renewcommand{\algorithmicensure}{\textbf{Output:}}
					\REQUIRE $\mathbf{A} \in \mathbb{R}^{N\times N},\ \bm{\widehat{x}}_0 \in \mathbb{R}^N,\ K\in \mathbb{N},\ \mathbb{E}[T] \in \mathbb{N},\ \bm{\pi} \in \mathbb{R}^K$
					\ENSURE $\bm{\widehat{x}}_K\in \mathbb{R}^N$
					\FOR {$k = 1$ to $K$}
					\STATE Sample i.i.d. $\rho \in (0,1)$
					\STATE Determine $T_{k}$ and ${\mathcal T}_{k}$
					\IF {$\rho < [\bm{\pi}]_k$}
					\STATE Compute $\bm{\widehat{x}}_{k}$ as in (\ref{updzero1})
					\ELSE
					\STATE Compute $\bm{\widehat{x}}_{k}$ as in (\ref{updzero2})
					\ENDIF
					\STATE Normalize $\bm{\widehat{x}}_{k}$
					\STATE If $\bm{\widehat{x}}_{k}^\top \bm{\widehat{x}}_{k-1}=1$ in floating-point arithmetic, {\bf exit}
					\ENDFOR
					\RETURN $\bm{x}_k$ 
				\end{algorithmic} 
			\end{algorithm}
			
			The main computational cost in each iteration of Algorithm \ref{alg1} is the progression of 
			$\bm{\widehat{x}}_{k-1}$ to $\bm{\widehat{x}}_{k}$ achieved by utilizing either (\ref{updzero1}) or 
			(\ref{updzero2}). The update in (\ref{updzero2}) can be written as 
			\begin{equation*}
				\bm{\widehat{x}}_{k}=\bm{\widehat{x}}_{k-1} + \mathbf{D}_{\mathcal{T}_k}\mathbf{A}\bm{\widehat{x}}_{k-1} -\mathbf{D}_{\mathcal{T}_k}\bm{\widehat{x}}_{k-1},
			\end{equation*}
			from which we infer that for both (\ref{updzero1}) and (\ref{updzero2}) the main computational 
			cost stems from computing the matrix-vector product 
			$\bm{y}_k=\mathbf{D}_{\mathcal{T}_{k}}\mathbf{A}\bm{\widehat{x}}_{k-1}$. The 
			cost of this computation is upper-bounded by $2T_k(N-1)$ and is achieved when each one of 
			the $T_k$ chosen rows of $\mathbf{A}$ is dense.

			\section{Averaging Power Iterations over~\eqref{updzero1}} \label{sec:alg2}

			Recall that in Section \ref{sec:newm}, it was shown that $\mathbb{E}[\bm{\widehat{x}}_k]=\mathbf{A}^k\bm{\widehat{x}}_0$ 
			and thus for a sufficiently large value of $k$, approximating $\mathbb{E}[\bm{\widehat{x}}_k]$ 
			is equivalent to approximating $\bm{v}_1$. One idea is to approximate the expectation 
			$\mathbb{E}[\bm{\widehat{x}}_k]$ via Monte-Carlo, \emph{i.e.}, by the empirical mean 
			$\frac{1}{L}\sum_{l=1}^L\bm{\widehat{x}}_k^{(l)},\ L\in \mathbb{N}$, where $\bm{\widehat{x}}_k^{(l)}$ 
			denotes the $l$-th replication of $k$ steps of inexact power iteration approximation with 
			the same initial approximation $\bm{\widehat{x}}_0$. While such an approach can lead to a good 
			approximation\footnote{A numerical experiment can be found in our supplement.} of $\bm{v}_1$ for modest values of $L$, it requires $L$ $k$-length 
			independent applications of inexact power iteration, thus increasing the overall computational 
			cost. Moreover, the value of $k$ is not known a priori.
			
			In this section we present an algorithm that converges to $\bm{v}_1$ without requiring more than 
			one trial. A natural way to develop 
			such an algorithm is to keep a running average of the approximations produced by inexact 
			power iteration. In contrast to the approaches outlined so far in this paper, we now consider 
			partial matrix-vector products where the sampling occurs along the rows of $\bm{\widehat{x}}_{k-1}$ 
			instead of matrix~$\mathbf{A}$, \emph{i.e.}, the matrix operator $\mathbf{D}_{\mathcal{T}_k}$ is applied to the 
			columns of $\mathbf{A}$. This is especially appealing when the matrix $\mathbf{A}$ is stored according to Compressed Column Storage, 
				also known as Harwell-Boeing format, where the entries of $\mathbf{A}$ are stored sequentially 
				by column.
			
			More specifically, we define the vector
			\begin{align*}
				\bm{\widehat{m}}_{k}=\frac{k-1}{k}\bm{\widehat{m}}_{k-1} + \frac{1}{k}\bm{\widehat v}_{k}, \quad \bm{\widehat{m}}_0=0,
			\end{align*}
			where for some random $\bm{\widehat v}_0 \in \mathbb{R}^N$, and $\mathbf{A}_{:j}$ representing the 
			$j$-th column of the matrix $\mathbf{A}$, we define
			\begin{align*}
				\bm{\widehat v}_k &= \sum_{j\in {\mathcal T}_k} \mathbf{A}_{:j}[\bm{\widehat{v}}_{k-1}]_j \\
				& = \frac{N}{\mathbb{E}[T]} \mathbf{A} \mathbf{D}_{\mathcal{T}_k}\bm{\widehat{v}}_{k-1}.
			\end{align*}
			\begin{lem}\label{lem:averaging_converge}
				Let $\mathbf{A}^k \bm{\widehat v}_0$ converge to $\bm{v}_1$ as $k \rightarrow \infty$. Then, 
				\begin{equation*}
					\lim_{k\ra \infty } \ex[\bm{\widehat{m}}_{k}]= \phi \bm{v}_1,\ \phi \in \mathbb{R}^*,
				\end{equation*}
				where $\mathbb{R}^*$ denotes the set of all real numbers excluding the origin.
			\end{lem}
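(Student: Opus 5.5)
The plan is to compute $\ex[\bm{\widehat v}_k]$ in closed form, exactly as in Proposition~\ref{pro1}, and then pass to the Cesàro mean. Unrolling the recursion gives
\[
\bm{\widehat v}_k=\left(\frac{N}{\mathbb{E}[T]}\right)^k \mathbf{A}\mathbf{D}_{\mathcal{T}_k}\mathbf{A}\mathbf{D}_{\mathcal{T}_{k-1}}\cdots \mathbf{A}\mathbf{D}_{\mathcal{T}_1}\bm{\widehat v}_0 .
\]
The selections $\mathcal{T}_1,\dots,\mathcal{T}_k$ are independent, so the expectation factorizes. By the equal-probability sampling assumption, $\ex[\mathbf{D}_{\mathcal{T}_q}]=\frac{\mathbb{E}[T]}{N}\mathbf{I}$; this is the same identity used in the proof of Proposition~\ref{pro1}, now applied on the right of $\mathbf{A}$. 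Hence
\[
\ex[\bm{\widehat v}_k]=\mathbf{A}^k\bm{\widehat v}_0 ,\qquad k\ge 0 .
\]

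Next, I unroll the averaging recursion $\bm{\widehat m}_k=\frac{k-1}{k}\bm{\widehat m}_{k-1}+\frac1k\bm{\widehat v}_k$ with $\bm{\widehat m}_0=0$. Induction gives $\bm{\widehat m}_k=\frac1k\sum_{l=1}^k\bm{\widehat v}_l$. By linearity of expectation,
\[
\ex[\bm{\widehat m}_k]=\frac1k\sum_{l=1}^k \mathbf{A}^l\bm{\widehat v}_0 .
\]

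By hypothesis $\mathbf{A}^l\bm{\widehat v}_0\to\bm{v}_1$. The Cesàro mean of a convergent sequence converges to the same limit (a standard $\varepsilon$-argument: split the sum at a large index $L_\varepsilon$, and note the first $L_\varepsilon$ terms contribute $O(1/k)$). Therefore $\ex[\bm{\widehat m}_k]\to\bm{v}_1$. This is the claim with $\phi=1$.

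If one reads the hypothesis more loosely, as convergence of $\mathbf{A}^k\bm{\widehat v}_0$ in direction (with $\lambda_1=1$, $|\lambda_j|<1$ for $j\ge2$, and $\bm{v}_1^\top\bm{\widehat v}_0\ne0$), I would instead expand in the eigenbasis:
\[
\ex[\bm{\widehat m}_k]=\sum_j \Bigl(\frac1k\sum_{l=1}^k\lambda_j^l\Bigr)(\bm{v}_j^\top\bm{\widehat v}_0)\bm{v}_j .
\]
The $j=1$ coefficient equals $\bm{v}_1^\top\bm{\widehat v}_0$ exactly. For $j\ge2$, the geometric-sum bound $\bigl|\frac1k\sum_{l=1}^k\lambda_j^l\bigr|\le \frac{1}{k(1-|\lambda_j|)}\to0$ applies. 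This gives the limit $\phi\bm{v}_1$ with $\phi=\bm{v}_1^\top\bm{\widehat v}_0\neq0$.

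The main obstacle is the factorization of the expectation of the product. Each $\mathbf{D}_{\mathcal{T}_q}$ is a random factor sandwiched between deterministic copies of $\mathbf{A}$, and the factorization needs the $\mathcal{T}_q$ to be mutually independent across iterations. I will state that independence explicitly, along with the normalization $\frac{N}{\mathbb{E}[T]}$ implicit in the paper's definition of $\bm{\widehat v}_k$. The remaining steps are routine.
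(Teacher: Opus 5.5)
Your proof is correct and takes essentially the same route as the paper: you factor the expectation using independence and $\ex[\mathbf{D}_{\mathcal{T}}]=\frac{\ex[T]}{N}\mathbf{I}$, write $\ex[\bm{\widehat m}_k]$ as an average of the vectors $\mathbf{A}^l\bm{\widehat v}_0$, and then pass to the limit. You are slightly more careful than the paper, which leaves the Ces\`aro step implicit, writes the average as $\frac{1}{k}\sum_{i=0}^{k-1}$ (a harmless off-by-one relative to the stated recursion), and reads the hypothesis as convergence to $(\bm{\widehat v}_0^\top\bm{v}_1)\bm{v}_1$, i.e., your second interpretation.
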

			\begin{proof}
				For any $k\ge 1$, we have, 
				\begin{align*}
					\ex[\bm{\widehat{m}}_{k}]=& \frac{1}{k}\sum_{i=0}^{k-1} \ex\left[\prod_{j=1}^i\frac{N}{\mathbb{E}[T]}\mathbf{A}\mathbf{D}_{\mathcal{T}_j}\right] \bm{\widehat v}_0
					= \frac{1}{k}\sum_{i=0}^{k-1} \mathbf{A}^i \bm{\widehat v}_0.
				\end{align*}
				By definition, $\mathbf{A}^k \bm{\widehat v}_0$ converges to $(\bm{\widehat v}_0^\top \bm{v}_1 )\bm{v}_1$ as $k\ra \infty$ and thus 
				$\ex[\bm{\widehat{m}}_{k}]$ converges to a vector that is parallel to $\bm{v}_1$.
			\end{proof}
			
			Algorithm \ref{alg2} presents the averaging procedure that computes an approximation 
			of $\bm{v}_1$ while leveraging only the matrix-vector model (\ref{updzero1}). In addition 
			to the computation of $\frac{N}{\mathbb{E}[T]} \mathbf{D}_{\mathcal{T}_k}\mathbf{A}\bm{\widehat v}_{k-1}$, the $k$-th 
			iteration of Algorithm \ref{alg2} requires $5N$ floating-point operations to normalize 
			$\bm{\widehat v}_k$ and scale/add the vectors $\frac{k-1}{k}\bm{\widehat{m}}_{k-1}$ and $\frac{1}{k}\bm{\widehat v}_k$. When contrasted to Algorithm \ref{alg1}, both algorithms require $2N$ floating-point operations to normalize the most recent iteration vector but Algorithm \ref{alg2} performs $3N$ additional floating-point operations per iteration during the computation of $\bm{\widehat{m}}_k$.
			\begin{algorithm}
				\caption{Power iteration with averaging} \label{alg2}
				\begin{algorithmic}[1]
					\renewcommand{\algorithmicrequire}{\textbf{Input:}}
					\renewcommand{\algorithmicensure}{\textbf{Output:}}
					\REQUIRE $\mathbf{A} \in \mathbb{R}^{N\times N},\ \bm{\widehat{m}}_0 \in \{0\}^N,\ \bm{\widehat v}_0 \in \mathbb{R}^N,\ K,\ \mathbb{E}[T] \in \mathbb{N}$
					\ENSURE $\bm{\widehat{m}}_K\in \mathbb{R}^N$
					\FOR {$k = 1$ to $K$}
					\STATE Determine $T_{k}$ and ${\mathcal T}_{k}$
					\STATE Set $\bm{\widehat v}_k = \frac{N}{\mathbb{E}[T]}  \mathbf{A}\mathbf{D}_{\mathcal{T}_k}\bm{\widehat v}_{k-1}$
					\STATE Normalize $\bm{\widehat v}_k$
					\STATE Set $\bm{\widehat{m}}_k = \frac{k-1}{k}\bm{\widehat{m}}_{k-1} + \frac{1}{k}\bm{\widehat v}_k$
					\ENDFOR
					\RETURN $\bm{\widehat x}_k = \bm{\widehat{m}}_K/\|\bm{\widehat{m}}_K\|$ 
				\end{algorithmic} 
			\end{algorithm}

			\section{Numerical Experiments} \label{sec:experiments}
			
			Our numerical experiments are conducted in a Matlab environment (version R2023b), 
			using 64-bit arithmetic, on a single core of a computing system equipped with an 
			Apple M1 Max processor and 64 GB of system memory. 
			Throughout the rest of our 
			experiments we consider $k=1,2,\ldots,50$. Due to the symmetry of the matrix $\mathbf{A}$, 
			the eigenvector $\bm{v}_1$ is perpendicular to all $N-1$ eigenvectors $\bm{v}_2,\ldots,\bm{v}_N$, 
			of the matrix $\mathbf{A}$, and thus a modulus of $\bm{v}_1^\top \bm{\widehat{x}}_{k}$ equal to one 
			implies that $\bm{\widehat{x}}_{k}$ is perpendicular to all $N$-1 non-sought eigendirections 
			of $\mathbf{A}$. 
			Therefore, monitoring how fast the modulus of (normalized) $\bm{\widehat{x}}_{k}$ approaches the unity 
			allows to determine how fast each algorithm converges to $\bm{v}_1$. In addition to varying 
			the total number of iterations $k$, we also consider different ratios $\mathbb{E}[T]/N$ 
			where for simplicity we set the variance of $T$ equal to zero, \emph{i.e.}, the value of $T$ 
			is constant across all iterations. Lower values of $\mathbb{E}[T]/N$ imply a smaller 
			cardinality of the row update subsets ${\mathcal T}_k$. 
			
			We consider six different algorithms, outlined as follows:
			\begin{enumerate}
				\item Inexact power iteration with each matrix-vector product modeled as 
				in (\ref{updzero2}). The legend associated with the plots 
				referring to this method is set as 
				$[\bm{\widehat{x}}_k]_i=[\bm{\widehat{x}}_{k-1}]_i,\ i\notin {\mathcal T}_k$.
				We normalize $\bm{\widehat{x}}_k$ at the end of the $k$-th iteration.
				\item Inexact power iteration with each matrix-vector product modeled as 
				in (\ref{updzero1}). The legend associated with the plots 
				referring to this method is set as 
				$[\bm{\widehat{x}}_k]_i=0,\ i\notin {\mathcal T}_k$.
				\item Algorithm \ref{alg1} with $[\bm{\pi}]_k=e^{-\zeta(k-1)}$ and $\zeta=\frac{N-\mathbb{E}[T]}{2N}$. The legend associated with the plots 
				referring to this method is set as “Alg. 1 (Expo)". 
				\item Algorithm \ref{alg1} with $[\bm{\pi}]_k=\frac{1}{1+e^{\alpha(k-\beta)}}$ where $\alpha=.3,\ \beta=5$. The legend associated with the plots referring to this method is set as “Alg. 1 (Log)". 
				\item Algorithm \ref{alg1} with $[\bm{\pi}]_k=1$ if $k \leq \lceil\mathbb{E}[T]/N\rceil$, and $[\bm{\pi}]_k=0$ otherwise. The legend associated with the plots referring to this method is set as “Alg. 1 (Step)". 
				\item Algorithm \ref{alg2} with a corresponding legend “Alg. 2".
			\end{enumerate}

			\subsection{Dense model problems}
			
			Our experimental dataset primarily consists of dense, random matrices with a 
			pre-determined spectrum and spectral gap. In particular, we generate the 
			symmetric matrix $\mathbf{A}$ by first creating a random $N\times N$ orthonormal matrix 
			$Q$ and then multiplying $\mathbf{A}=\mathbf{Q}\mathbf{\Lambda} \mathbf{Q}^\top$ where 
			$\mathbf{\Lambda}=\mathrm{diag}(\lambda_1,\ldots,\lambda_N)$ is a diagonal 
			matrix whose prescribed values are the eigenvalues of the matrix $\mathbf{A}$. 
			We consider four different sets of eigenvalues $\lambda_j,\ j=1,\ldots,N$: 
			\begin{enumerate}
				\item $\lambda_1=1,\ \lambda_j \in (0.0,0.25),\ j=2,\ldots,N$
				\item $\lambda_1=1,\ \lambda_j \in (0.0,0.5),\ j=2,\ldots,N$
				\item $\lambda_1=1,\ \lambda_j \in (-0.25,0.25),\ j=2,\ldots,N$
				\item $\lambda_1=1,\ \lambda_j \in (-0.5,0.5),\ j=2,\ldots,N$
			\end{enumerate}
			The first two spectrums have only positive eigenvalues and the eigenvalues 
			$\lambda_2,\ldots,\lambda_N$, are clustered in the intervals $(0.0,0.25)$ 
			and $(0.0,0.5)$, respectively. The last two spectrums have both positive 
			and negative eigenvalues and the eigenvalues 
			$\lambda_2,\ldots,\lambda_N$, are clustered in the intervals $(-0.25,0.25)$ 
			and $(-0.5,0.5)$, respectively. Unless mentioned otherwise, all figures 
			plotted in the following represent empirical means obtained via twenty 
			independent runs with the same initial guesses for the six algorithms.


			\begin{figure*}[ht]
				\centering
				\includegraphics[width=0.32\linewidth]{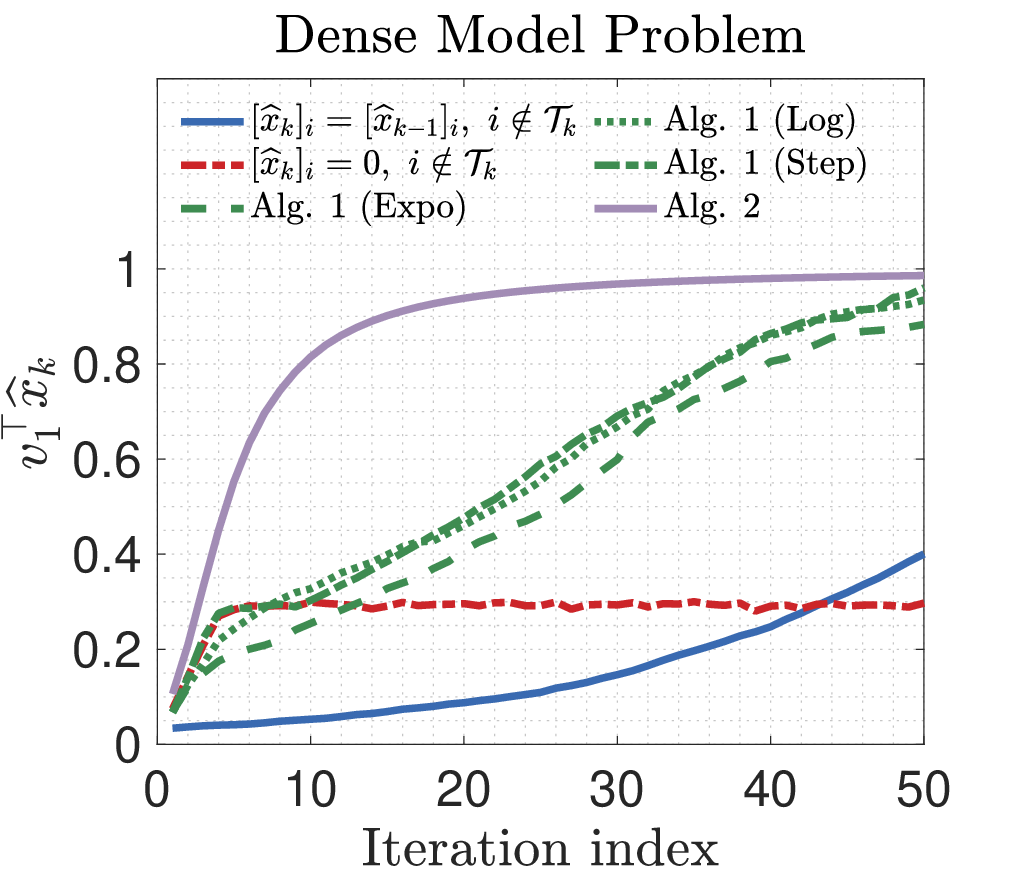}   
				\includegraphics[width=0.32\linewidth]{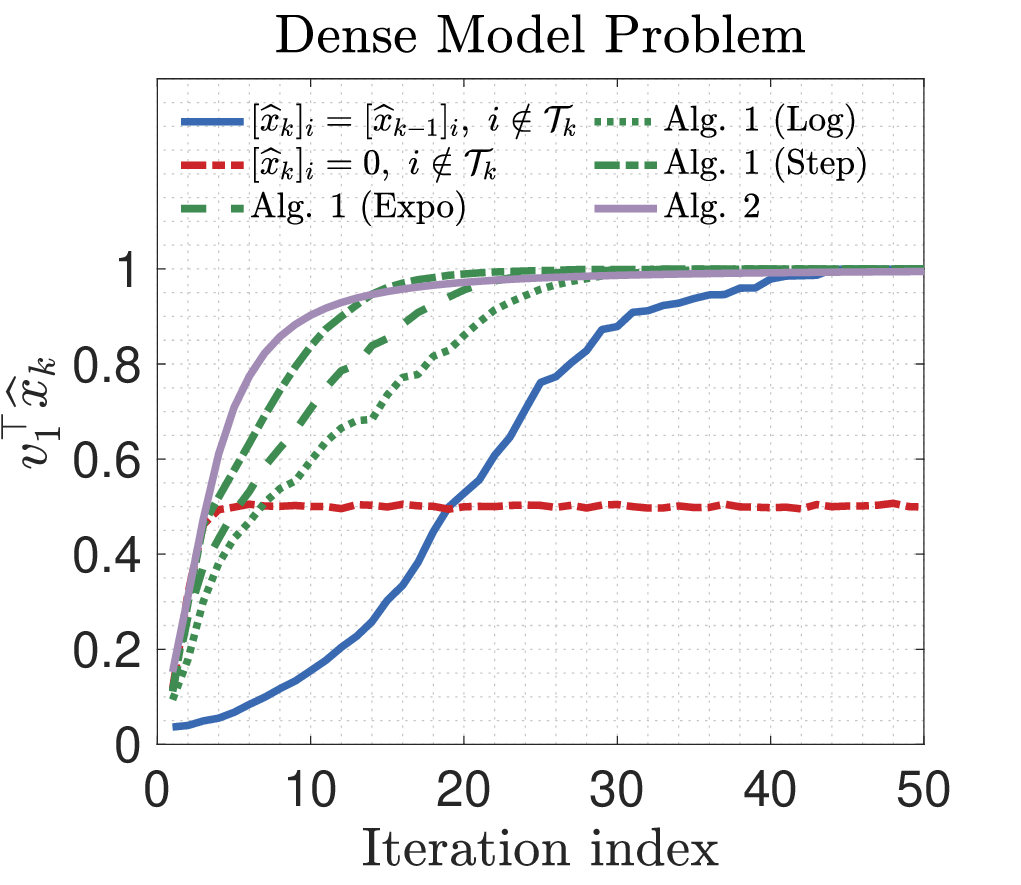}    
				\includegraphics[width=0.32\linewidth]{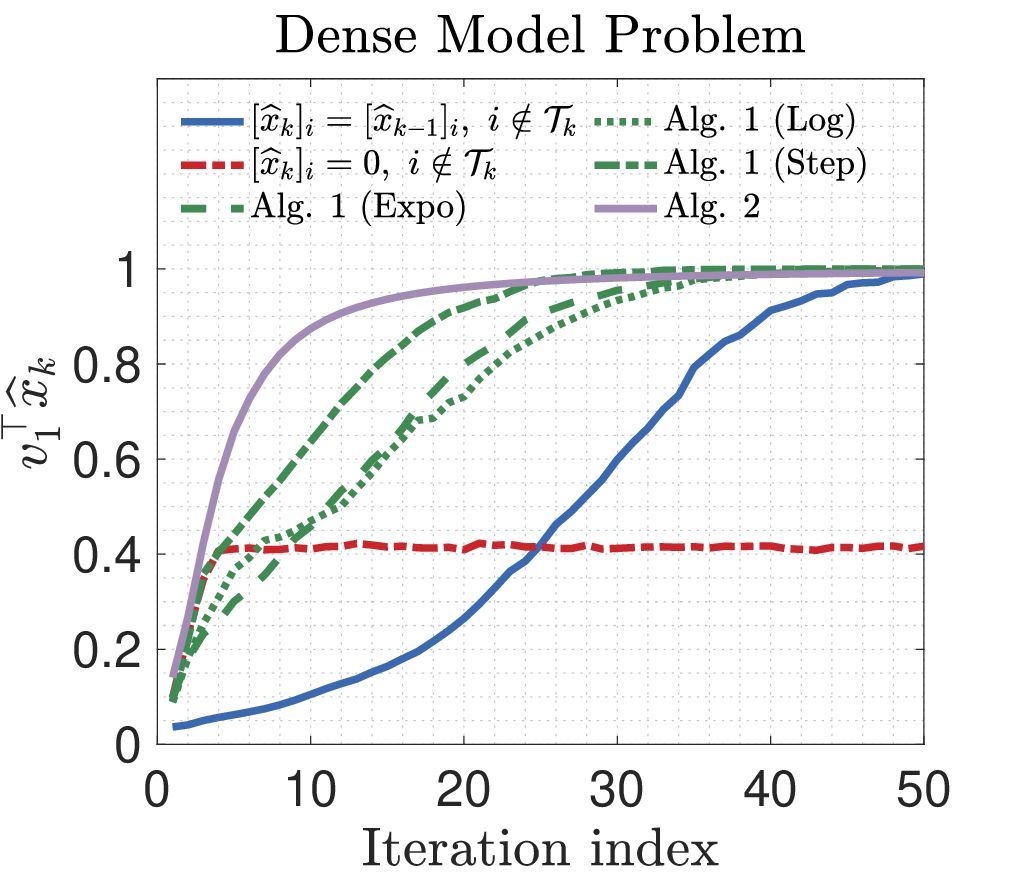} 
				\caption{{\it Convergence plots for the dense model matrix $\mathbf{A}$ with spectrum designed as 
						$\lambda_1=1,\ \lambda_j \in (0.0,0.25),\ j=2,\ldots,N$. Left: $\mathbb{E}[T]/N=0.1$. Center: $\mathbb{E}[T]/N=0.2$. Right: $\mathbb{E}[T]/N=0.3$.}}\label{fig3}
			\end{figure*}
			
			\begin{figure*}[ht]
				\centering
				\includegraphics[width=0.32\linewidth]{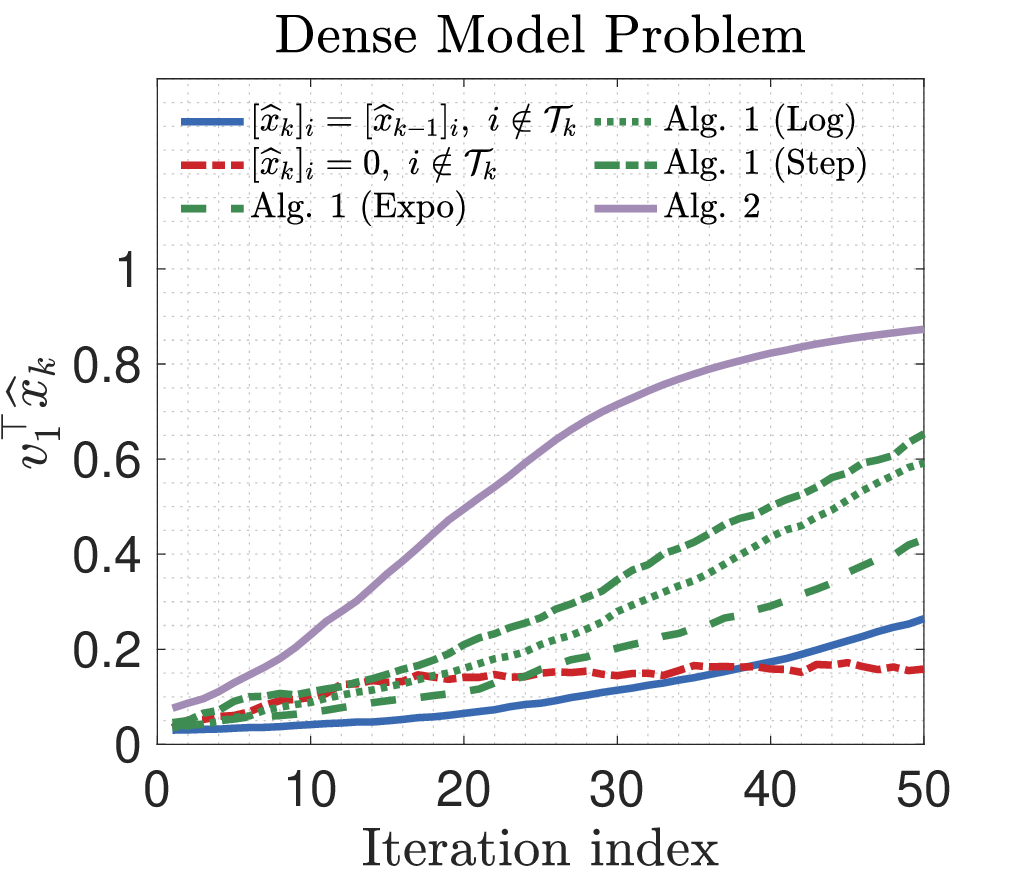}   
				\includegraphics[width=0.32\linewidth]{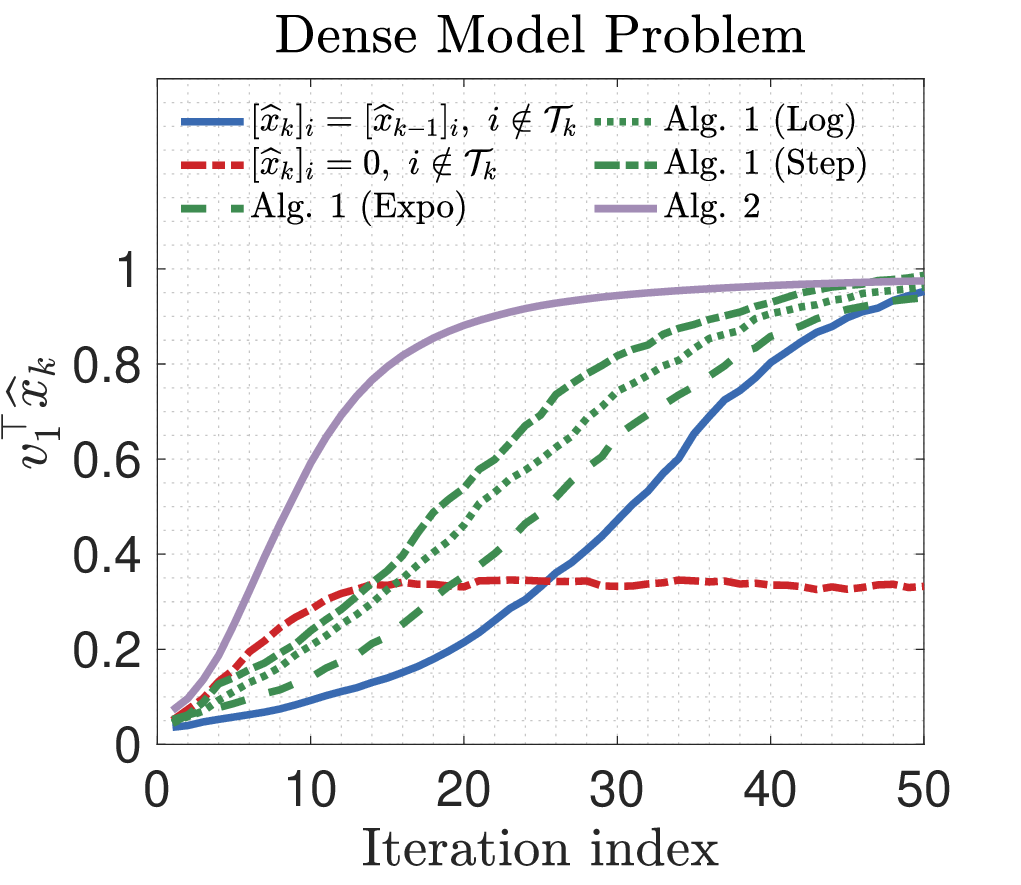}    
				\includegraphics[width=0.32\linewidth]{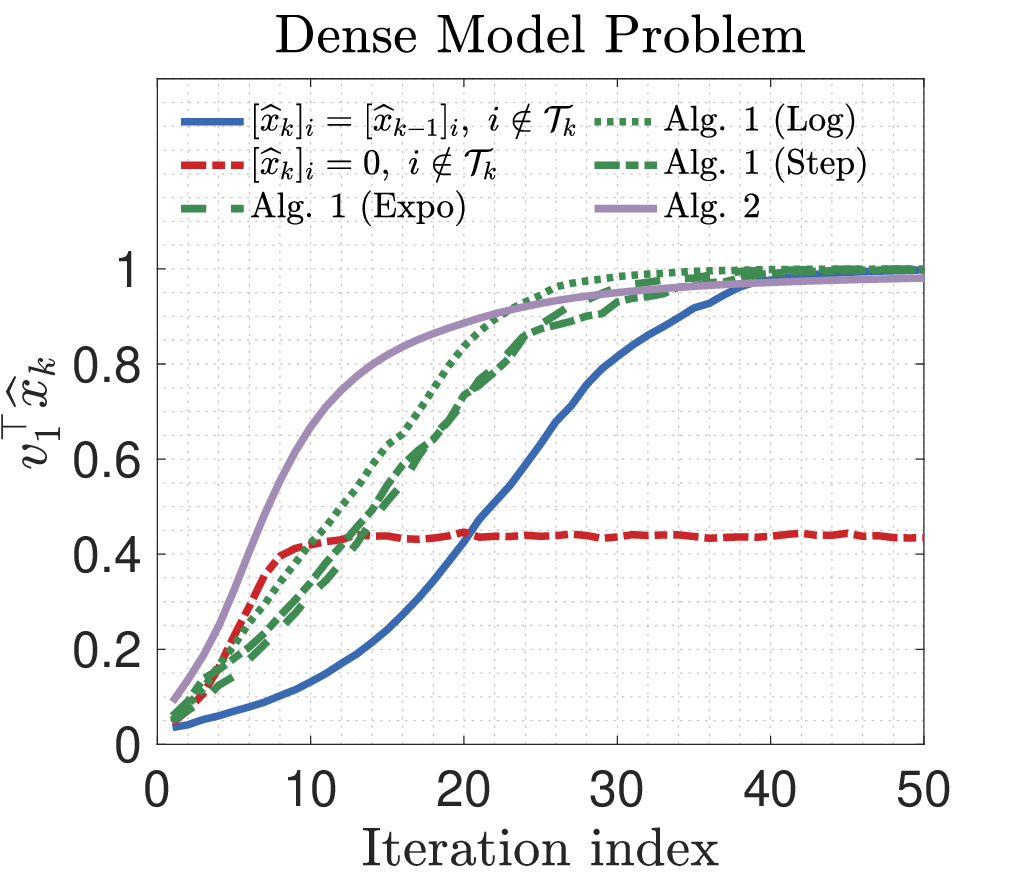} 
				\caption{{\it Convergence plots for the dense model matrix $\mathbf{A}$ with spectrum designed as 
						$\lambda_1=1,\ \lambda_j \in (0.0,0.5),\ j=2,\ldots,N$. Left: $\mathbb{E}[T]/N=0.1$. Center: $\mathbb{E}[T]/N=0.2$. Right: $\mathbb{E}[T]/N=0.3$.}}\label{fig4}
			\end{figure*}
			
			\begin{figure*}[ht]
				\centering
				\includegraphics[width=0.32\linewidth]{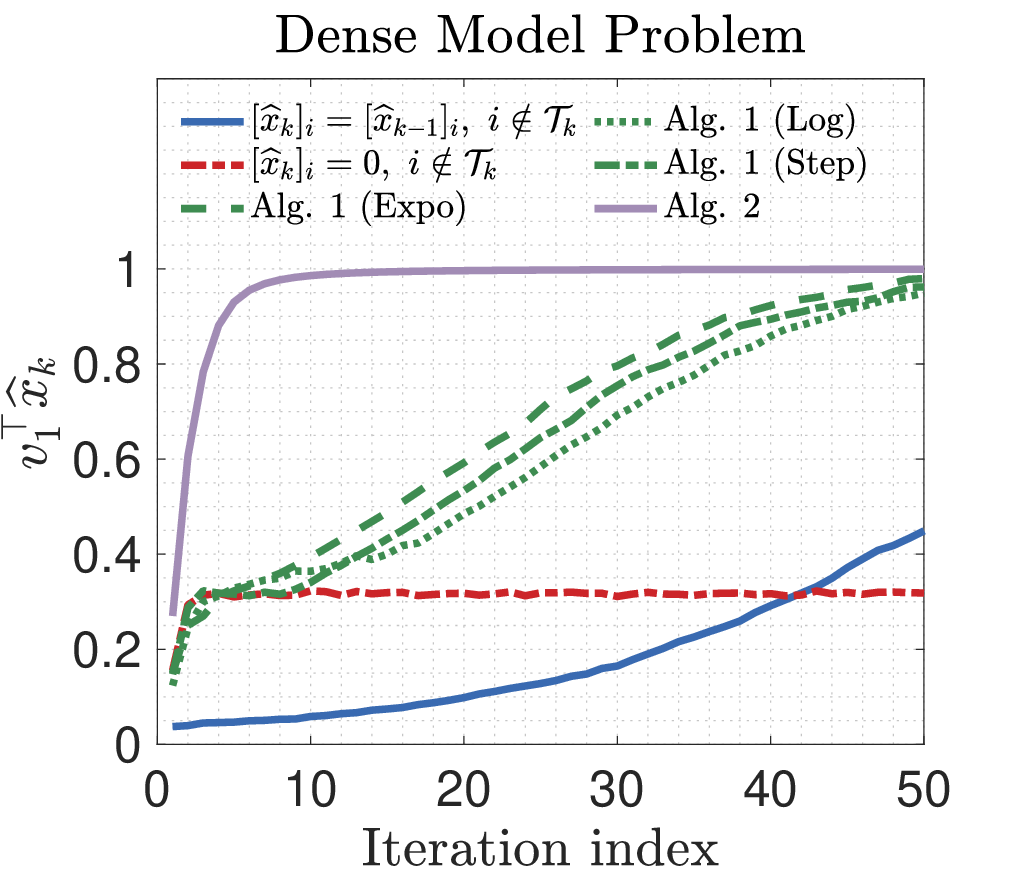}   
				\includegraphics[width=0.32\linewidth]{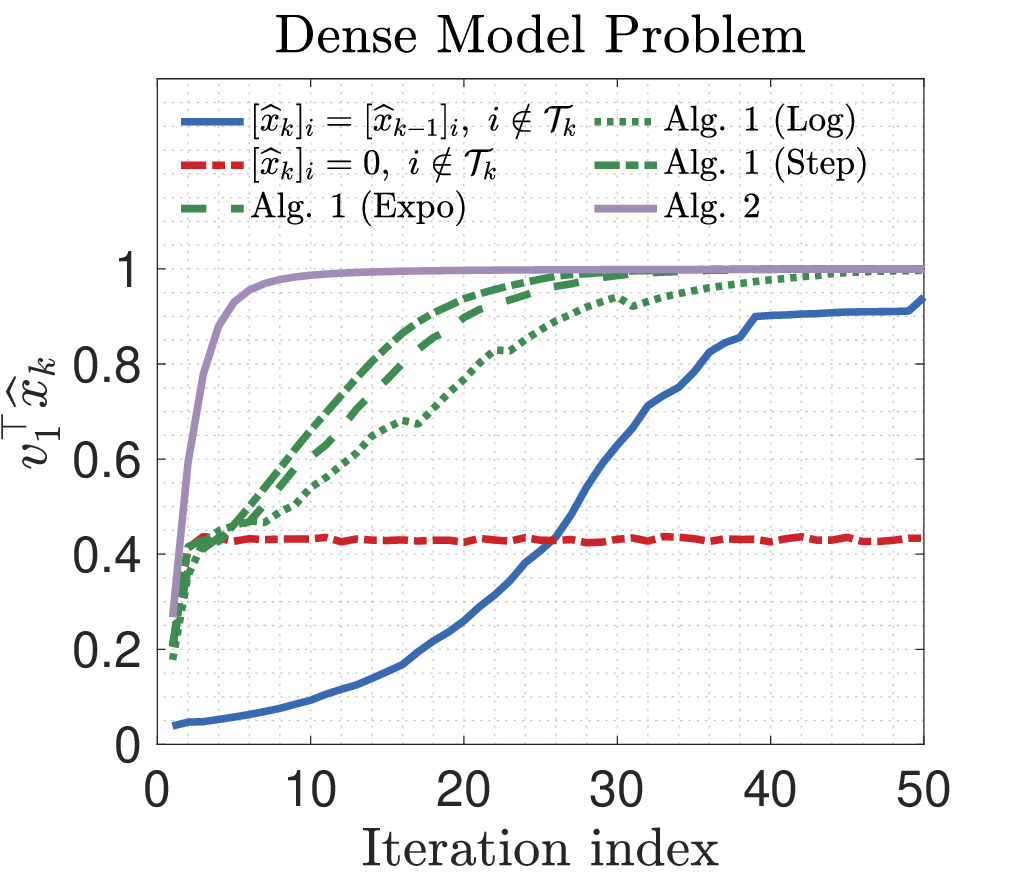}    
				\includegraphics[width=0.32\linewidth]{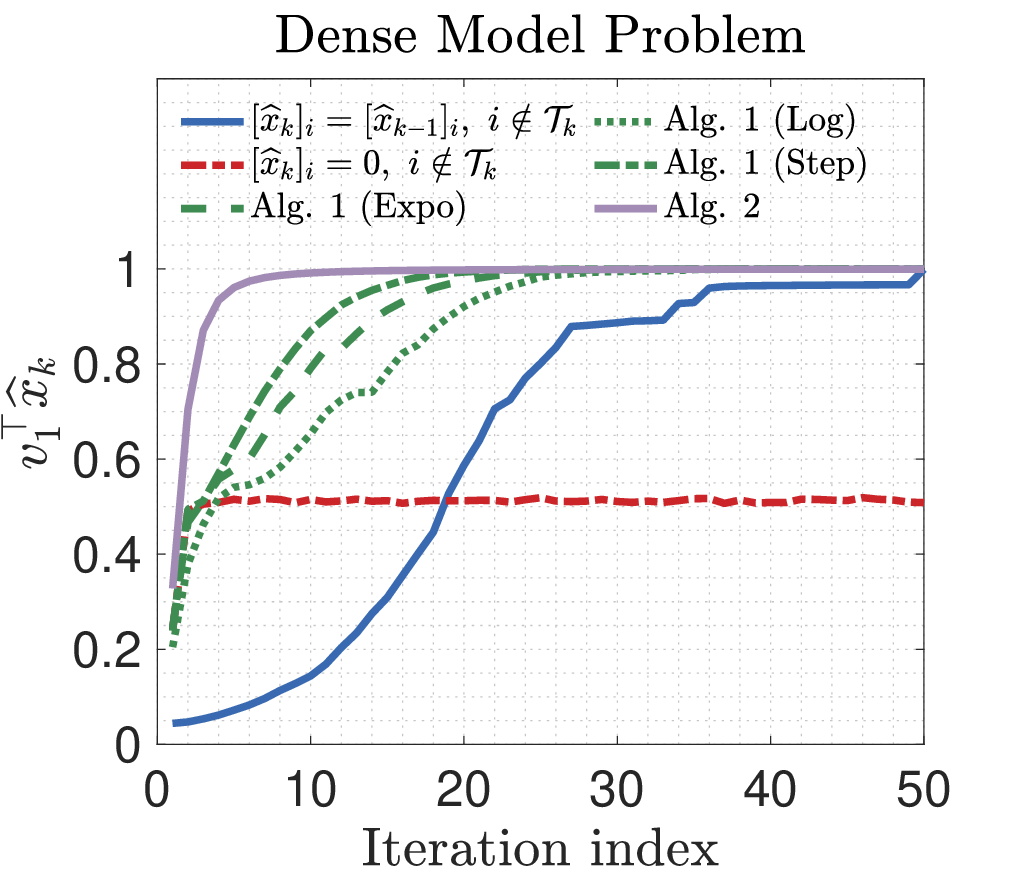} 
				\caption{{\it Convergence plots for the dense model matrix $\mathbf{A}$ with spectrum designed as 
						$\lambda_1=1,\ \lambda_j \in (-0.25,0.25),\ j=2,\ldots,N$. Left: $\mathbb{E}[T]/N=0.1$. Center: $\mathbb{E}[T]/N=0.2$. Right: $\mathbb{E}[T]/N=0.3$.}}\label{fig5}
			\end{figure*}
			
			\begin{figure*}[ht]
				\centering
				\includegraphics[width=0.32\linewidth]{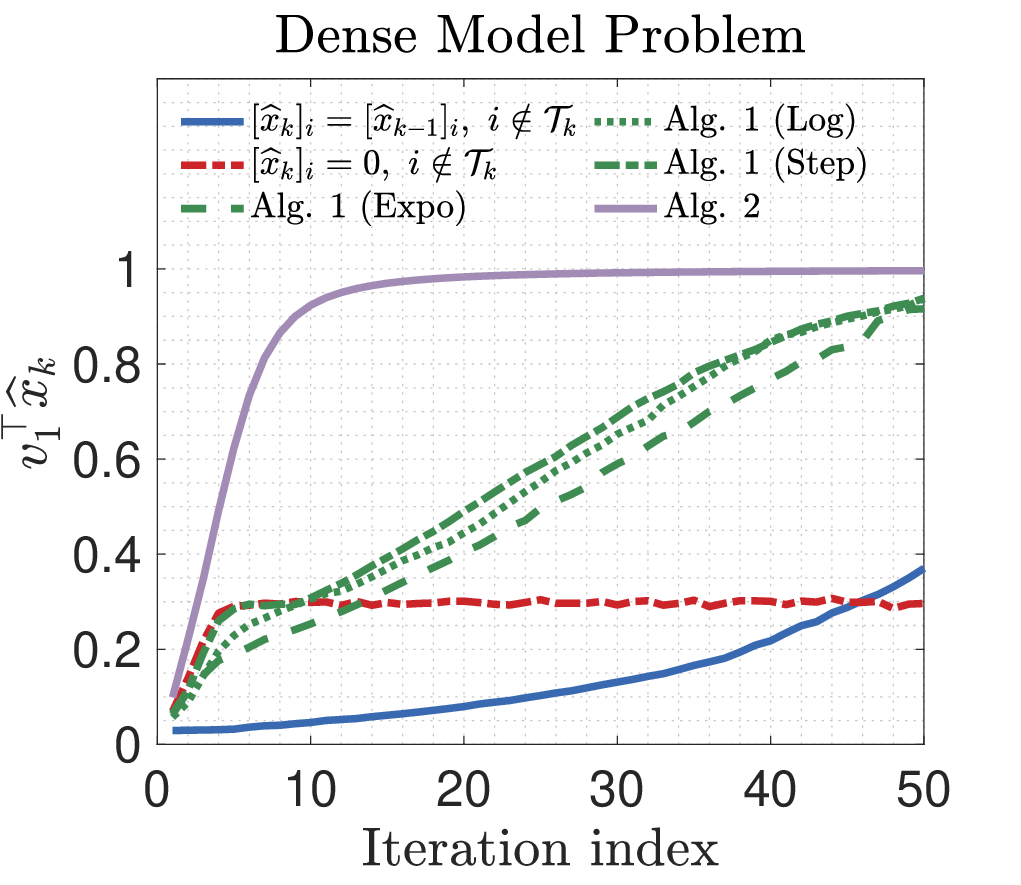}   
				\includegraphics[width=0.32\linewidth]{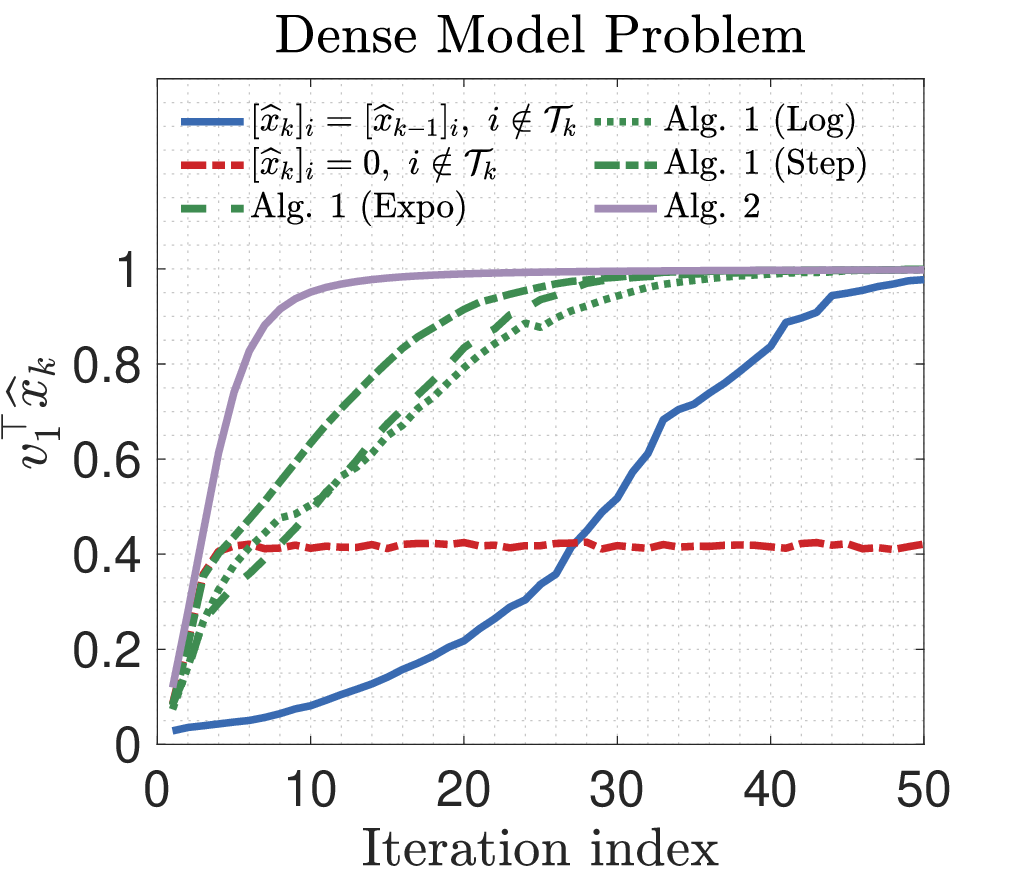}    
				\includegraphics[width=0.32\linewidth]{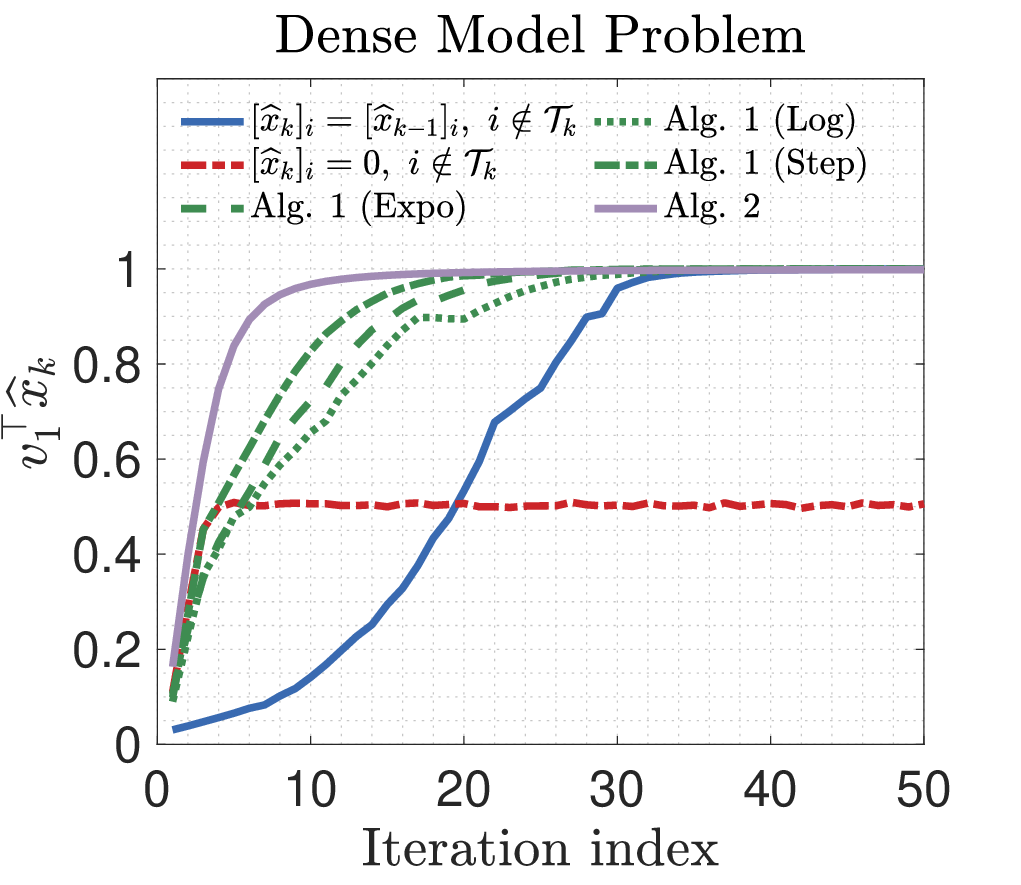} 
				\caption{{\it Convergence plots for the dense model matrix $\mathbf{A}$ with spectrum designed as 
						$\lambda_1=1,\ \lambda_j \in (-0.5,0.5),\ j=2,\ldots,N$. Left: $\mathbb{E}[T]/N=0.1$. Center: $\mathbb{E}[T]/N=0.2$. Right: $\mathbb{E}[T]/N=0.3$.}}\label{fig6}
			\end{figure*}
			
			Figures \ref{fig3}-\ref{fig6} plot the modulus of $\bm{v}_1^\top \bm{\widehat{x}}_{k}$ as a function of 
			the iteration index $k=1,\ldots,50$. The expected number $\mathbb{E}[T]$ of updated entries 
			per iteration for each one of the six algorithms we compare is set such that $\mathbb{E}[T]/N=0.1$ (left), $\mathbb{E}[T]/N=0.2$ (middle), and $\mathbb{E}[T]/N=0.3$ (right). Among the six 
			algorithms we compare, inexact power iteration with the matrix-vector product modeled 
			as in (\ref{updzero1}) performs the worst due to the stagnation in the accuracy resulting from 
			setting $N-T_k$ entries of $\bm{\widehat{x}}_k$ equal to zero. Naturally, as $T_k$ increases, \emph{i.e.}, as 
			$\mathbb{E}[T]/N$ becomes larger, fewer entries are set equal to zero in each iteration and the 
			stagnation accuracy level increases. Nonetheless, what is interesting about this algorithm is 
			that initially is more accurate than inexact power iteration with the matrix-vector product modeled 
			as in (\ref{updzero2}). The reason for this phenomenon is that the former algorithm is -in expectation- equivalent to classical power iteration and and variance is smaller for small values of $k$. On the other hand, the latter algorithm is equivalent to asynchronous power iteration with normalization, and is guaranteed to converge to the dominant eigenvector $\bm{v}_1$ for all matrices $\mathbf{A}$ we consider in this section. The best performing approaches for all problems considered in this section are the two algorithms proposed in this paper, namely Algorithm \ref{alg1} and Algorithm \ref{alg2}. In particular, 
			Algorithm \ref{alg2} achieved the best overall accuracy with a steep increase during the early iterations. On the other hand, the performance accuracy of Algorithm \ref{alg1} lies between that 
			of Algorithm \ref{alg2} and inexact power iteration with a matrix-vector product dictated by  
			(\ref{updzero2}). Among the three different options of Algorithm \ref{alg1}, the worst performing option was the one where the probability vector $\bm{\pi}$ is set as $[\bm{\pi}]_k=\frac{1}{1+e^{\alpha(k-\beta)}}$ (logistic function). The reason for this is that the curve determined by the entries of $\bm{\pi}$ is 
			less steep and thus makes it more likely that Algorithm \ref{alg1} might choose the update model (\ref{updzero1}) at a later iteration, thus zeroing many accurate components of the approximation $\bm{\widehat{x}}_k$. On the other hand, performing a deterministic switch as in “Alg. 1 (Step)" and 
			setting the probability vector as $[\bm{\pi}]_k=e^{-\zeta(k-1)}$ as in “Alg. 1 (Expo)" performed similarly.
			
			\begin{table}[t]
				\centering
				\caption{Iteration count required to reach the tolerance
						$1-|\bm{v}_1^\top \bm{\widehat{x}}_k|\le \phi$ for the dense model problem with
						$\lambda_1=1$ and $\lambda_j\in(-0.5,0.5)$, $j=2,\ldots,N$. The symbol
						``$F$'' indicates that the tolerance was not reached within fifty iterations.}
				\label{tab:iteration-counts-dense}
				\begin{tabular}{c c c c c c}
					\hline
					$E[T]/N$ & $\phi$ &
					Async. $(\bm{\widehat{x}}_k)_i=(\bm{\widehat{x}}_{k-1})_i$ &
					Alg.~1 Exp. &
					Alg.~1 Log. &
					Alg.~2 \\
					\hline
					$0.1$ & $0.3$ & $F$ & $36$& $32$& $7$ \\
					$0.1$ & $0.15$ & $F$ & $45$ & $40$& $11$ \\
					$0.1$ & $0.05$ & $F$ & $F$ & $50$& $18$ \\
					\hline
					$0.2$ & $0.3$ & $33$ & $17$ & $18$ & $6$\\
					$0.2$ & $0.15$ & $40$ & $20$ & $21$ & $9$ \\
					$0.2$ & $0.05$ & $45$ & $28$ & $30$ & $14$ \\
					\hline
					$0.3$ & $0.3$ & $25$ & $10$& $12$ & $5$ \\
					$0.3$ & $0.15$ & $28$ & $14$ & $16$ & $8$ \\
					$0.3$ & $0.05$ & $30$ & $20$ & $23$ & $12$ \\
					\hline
				\end{tabular}
			\end{table}
			Table \ref{tab:iteration-counts-dense} lists the iteration count required to reach the tolerance
				$1-|\bm{v}_1^\top \bm{\widehat{x}}_k|\le \phi,\ \phi\in \{0.3,\ 0.15,\ 0.05\}$ for the dense model problem with $\lambda_1=1$ and $\lambda_j\in(-0.5,0.5)$, $j=2,\ldots,N$. Across the tested options, 
				the proposed algorithms typically attain a given tolerance in fewer iterations than the asynchronous baseline, especially for small and moderate update ratios $E[T]/N$. The dependence on $E[T]/N$ is also consistent with the theoretical results presented so far in this paper, i.e., smaller values of $E[T]/N$ reduce the per-iteration cost and communication volume but increase the variance of the zero-filling model and slow down the asynchronous model by shrinking the effective spectral gap of the expected iteration matrix. Conversely, increasing $E[T]/N$ improves stability and reduces the number of iterations needed to reach a fixed tolerance, but at the cost of more observed rows or columns per iteration.

			\begin{figure*}[ht]
				\centering
				\includegraphics[width=0.32\linewidth]{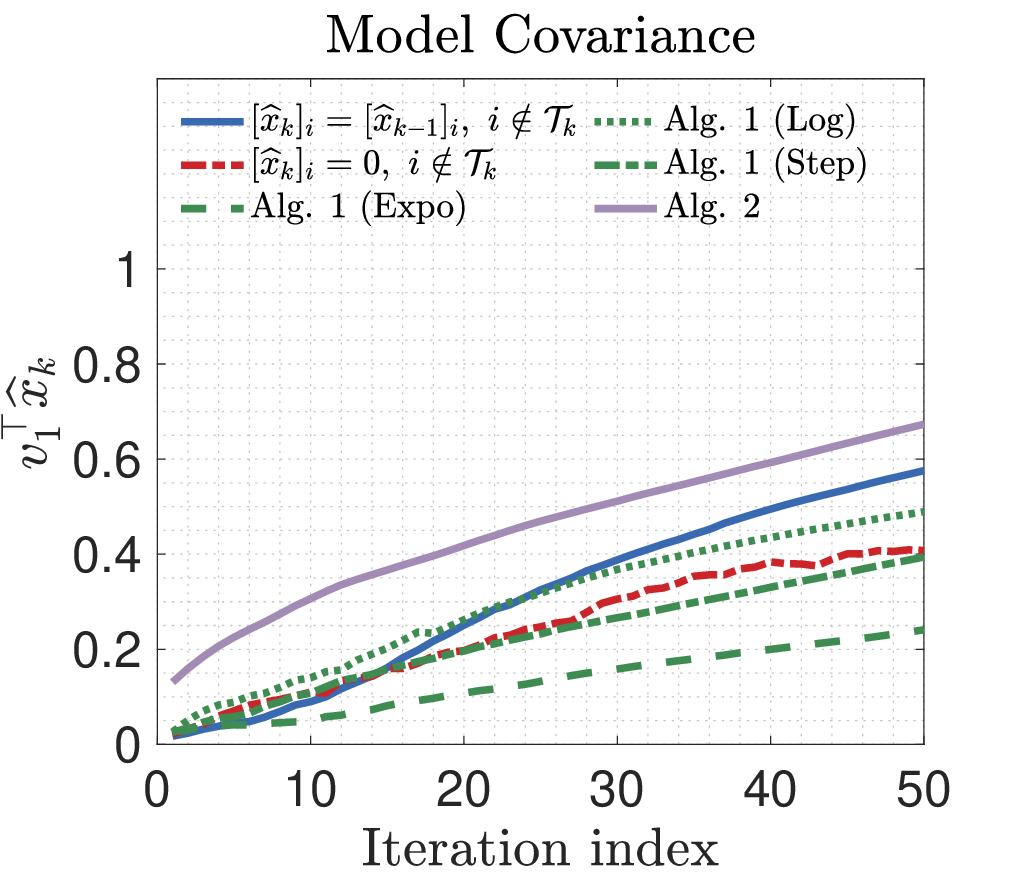}   
				\includegraphics[width=0.32\linewidth]{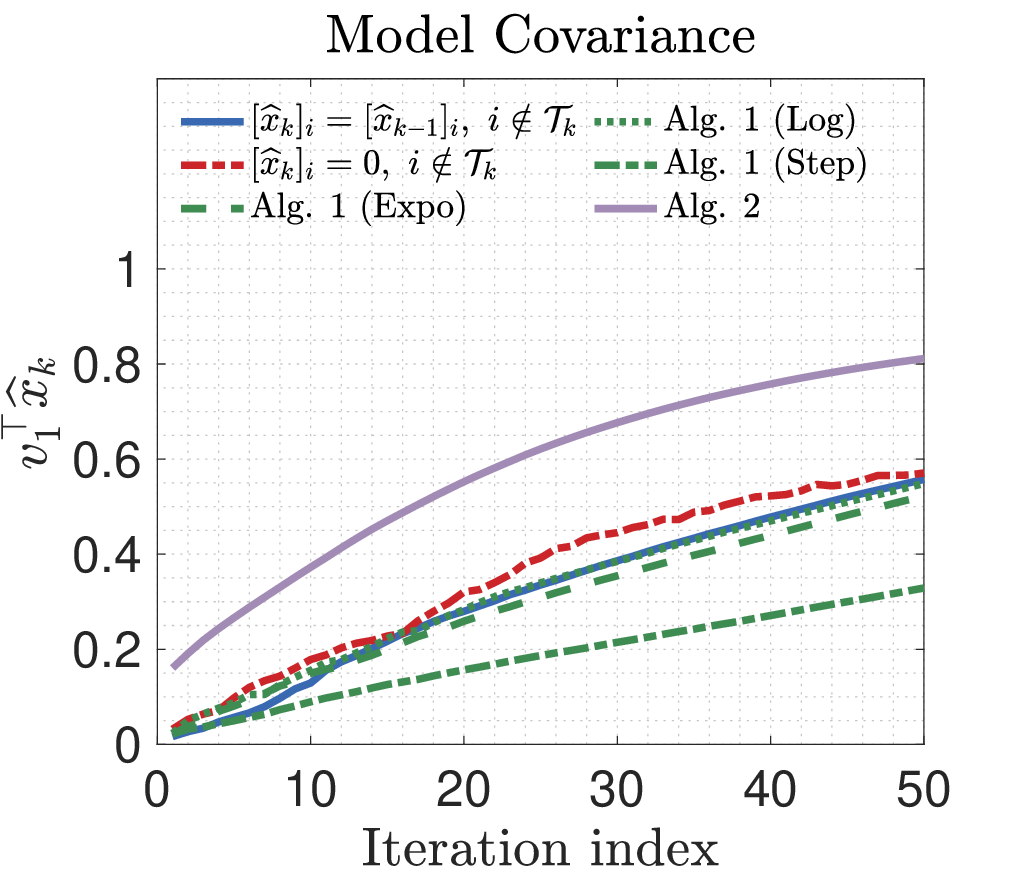}    
				\includegraphics[width=0.32\linewidth]{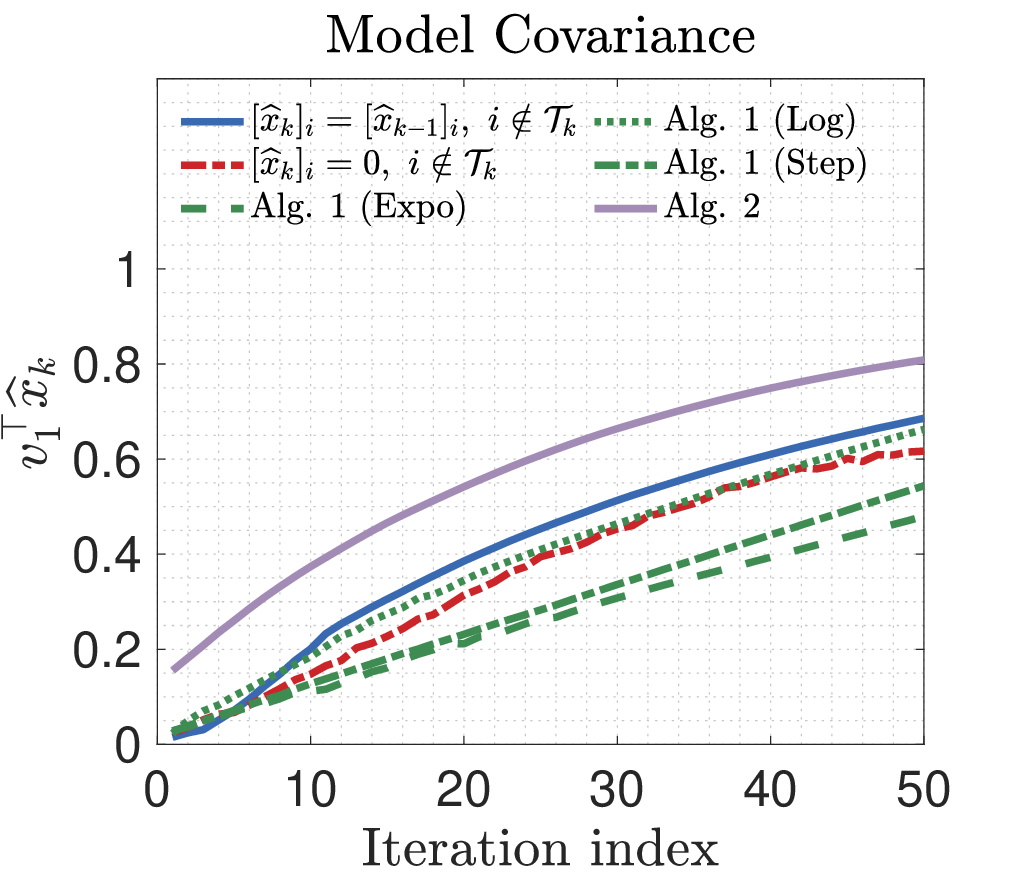} 
				\caption{{\it Convergence plots for the matrix $\mathbf{A}=\mathbf{XX}^\top$ where $\mathbf{X}$ is a random matrix of size $1024\times 256$ with zero column mean. Left: $\mathbb{E}[T]/N=0.3$. Center: $\mathbb{E}[T]/N=0.5$. Right: $\mathbb{E}[T]/N=0.7$.}}\label{fig8}
			\end{figure*}
			Finally, we consider the computation of the dominant eigenvector of sample covariance 
			matrices $\mathbf{A}=\frac{1}{K-1}\mathbf{XX}^{\top}$ where $\mathbf{X}\in \mathbb{R}^{N\times M},\ M\ll N$, is a 
			thin-and-tall matrix with zero column mean and rank equal to $M$. Thus, the rank of the matrix 
			$\mathbf{A}$ is equal to $M$ as well, \emph{i.e.}, $\mathbf{A}$ has $N-M$ eigenvalues of zero modulus. We scale the 
			non-zero eigenvalues of the matrix $\mathbf{A}$ such that $\lambda_1=1$ and apply all five variants of 
			power iteration. The matrix $\mathbf{A}$ is created by setting $N=1024,\ M=256$, and sampling each 
			entry of the matrix $\mathbf{X}$ prior to subtracting its column mean from the standard normal distribution.
			Figure \ref{fig8} plot the modulus of $\bm{v}_1^\top \bm{\widehat{x}}_k$ as a function of the iteration 
			index $k\in [1,50]$ where the expected number of updated entries per iteration 
			is sampled so that $\mathbb{E}[T]/N=\{0.7,0.8,0.9\}$ for each one of the six variants 
			of power iteration. The results are in line with the ones obtained for the full-rank covariance matrices.
			
			\subsection{Adjacency matrices of graphs}
			
			Our final set of experiments considers the application of asynchronous and partial power iteration on sparse matrices representing adjacency matrices of graphs. We consider three different networks. The first network, \emph{Arenas/email}, represents a network of e-mail interchanges between $N=1,133$ members of the University Rovira i Virgili (Tarragona) \cite{guimera2003self,davis2011university}. The second network, \emph{socfb-MIT}, represents an undirected, unweighted, represents a social friendship network between $N=6,403$ members of the Massachusetts Institute of Technology \cite{nr-aaai15}. Finally, our third network, \emph{Protein interaction}, represents observed physical interactions between $N=4,388$ proteins. Each edge represents a protein and an edge exists between two proteins if they have been observed to interact. Interactions between proteins are bidirectional so the set of interactions between proteins in an organism forms an undirected unweighted network  \cite{uetz2000comprehensive}.
			
			\begin{figure*}[ht]
				\centering
				\includegraphics[width=0.32\linewidth]{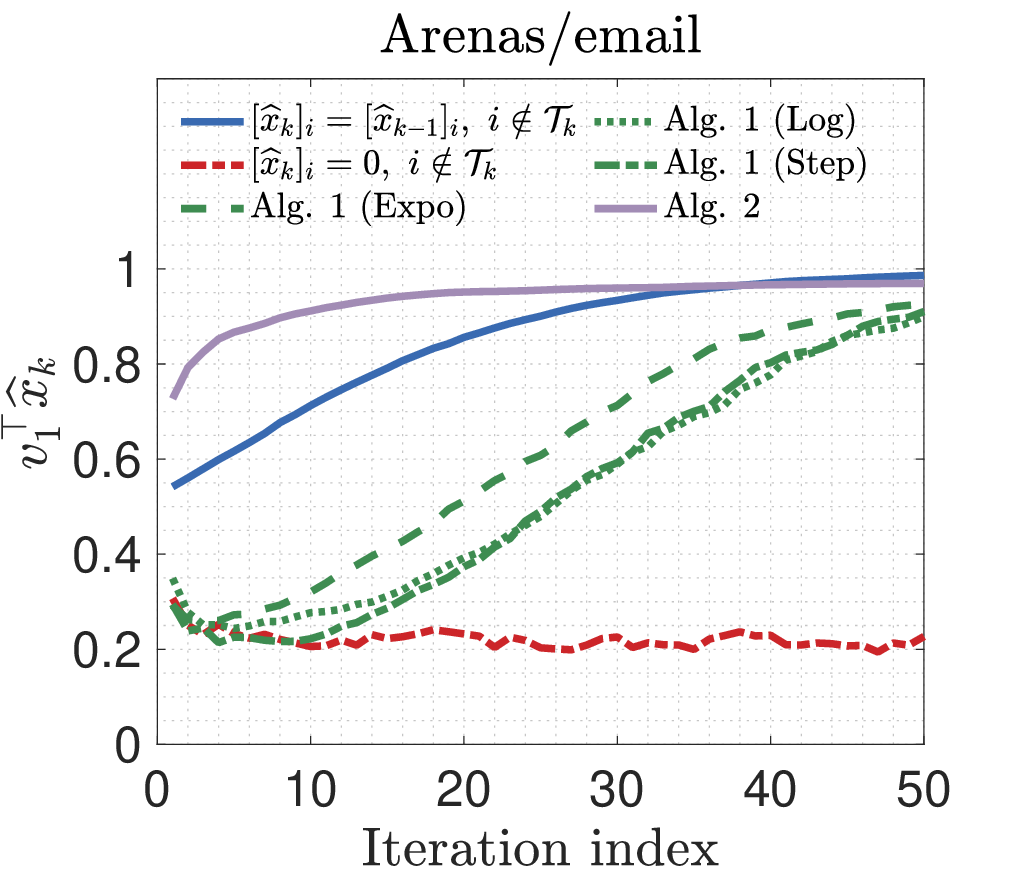}   
				\includegraphics[width=0.32\linewidth]{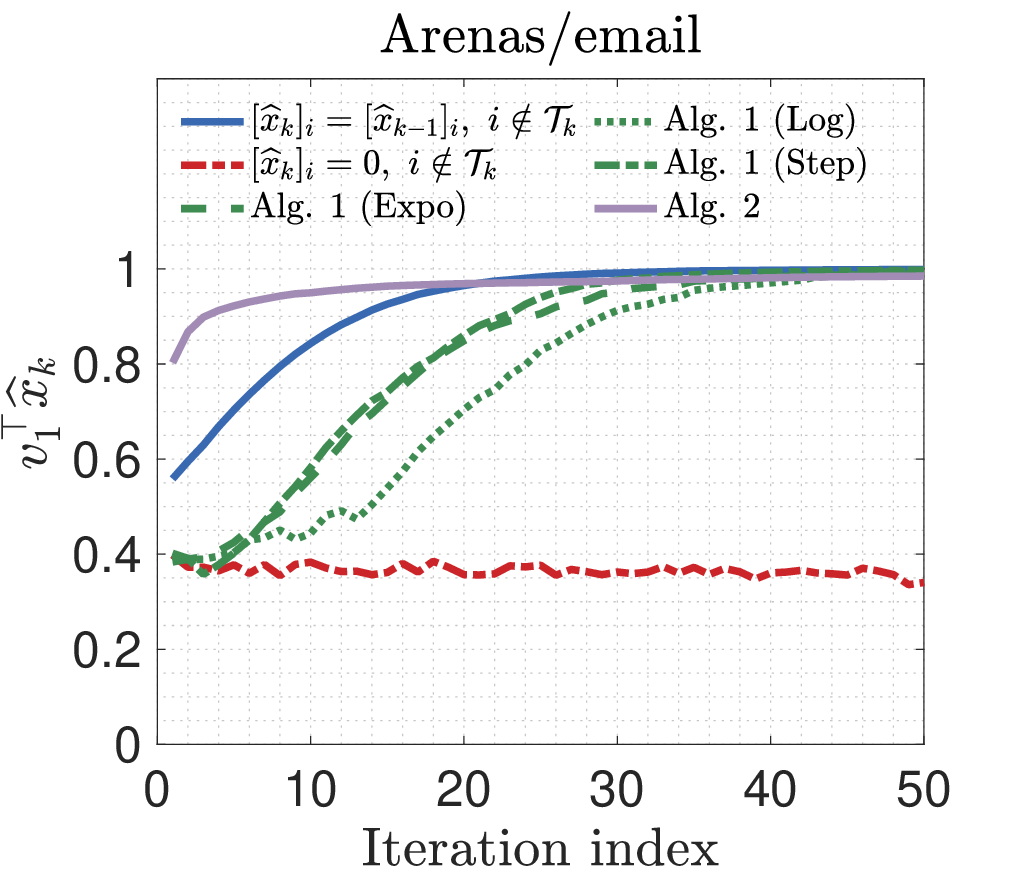}    
				\includegraphics[width=0.32\linewidth]{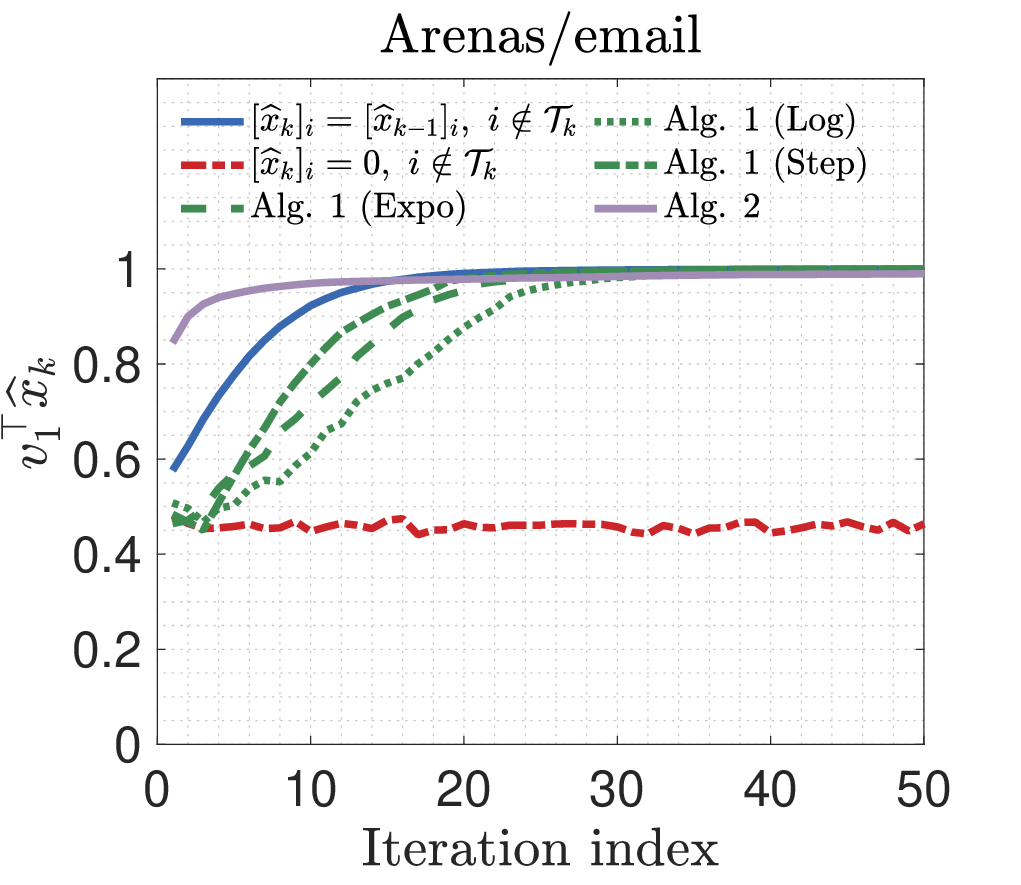} 
				\includegraphics[width=0.32\linewidth]{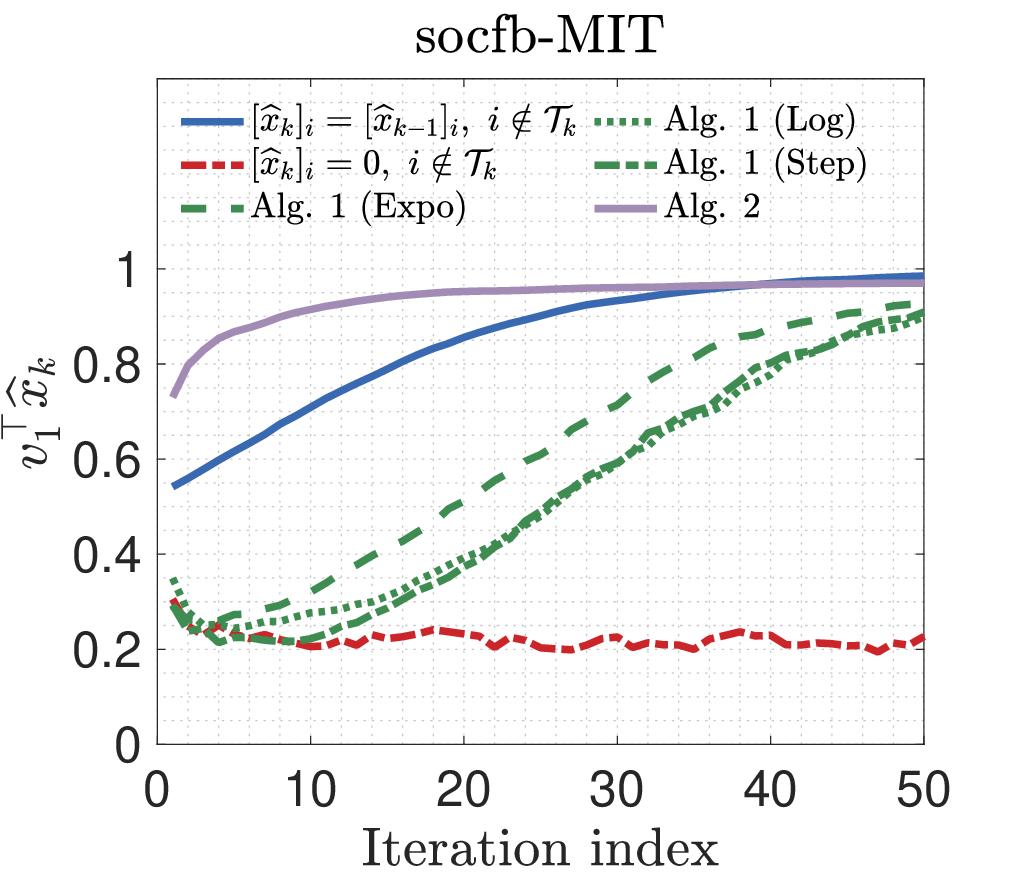}   
				\includegraphics[width=0.32\linewidth]{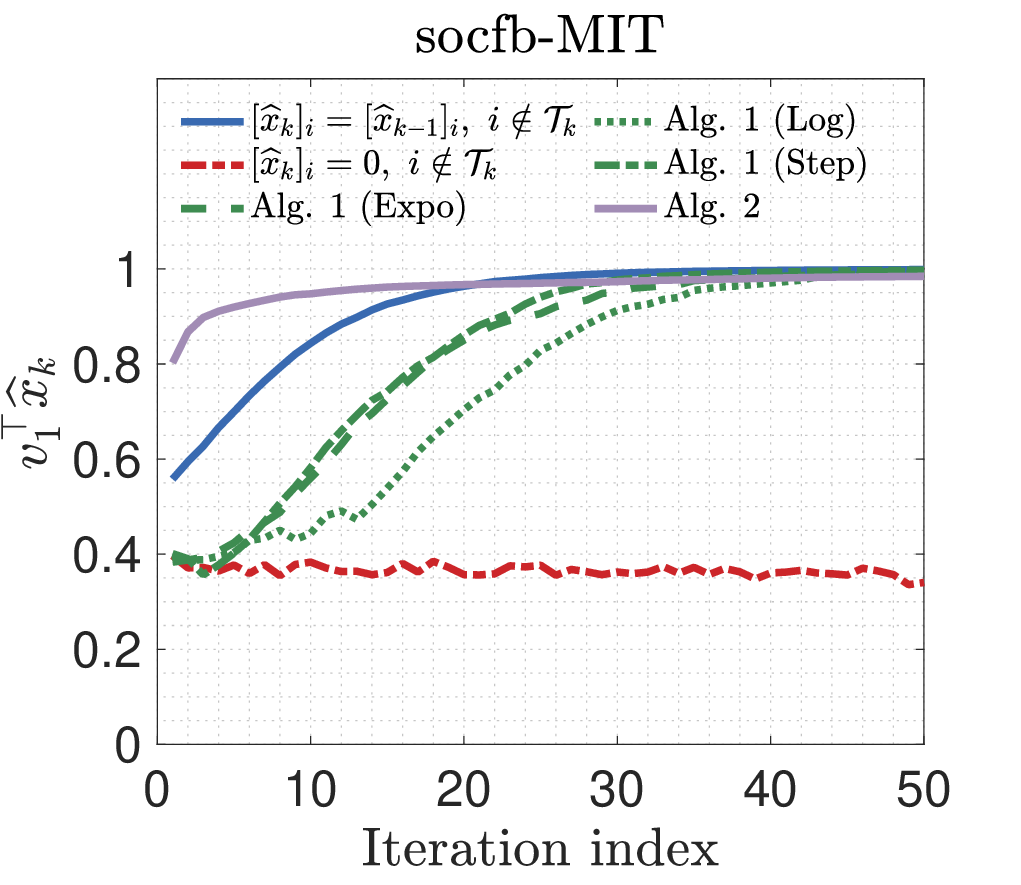}    
				\includegraphics[width=0.32\linewidth]{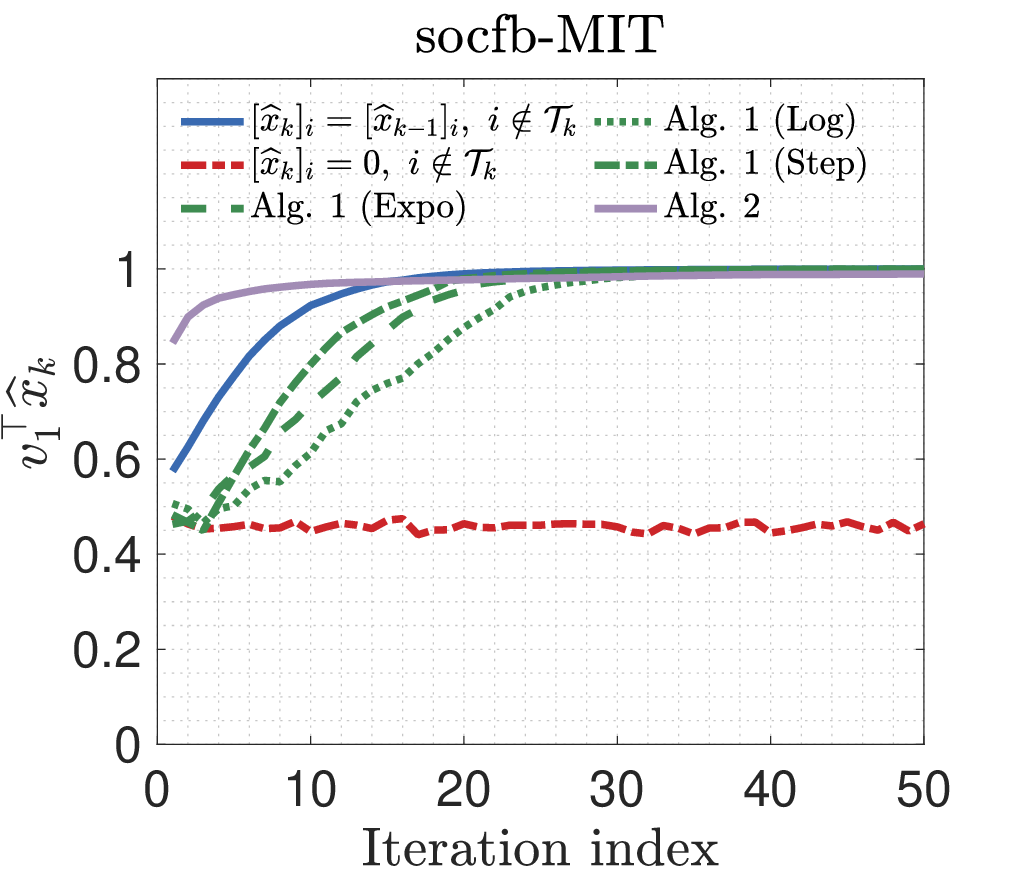} 
				\includegraphics[width=0.32\linewidth]{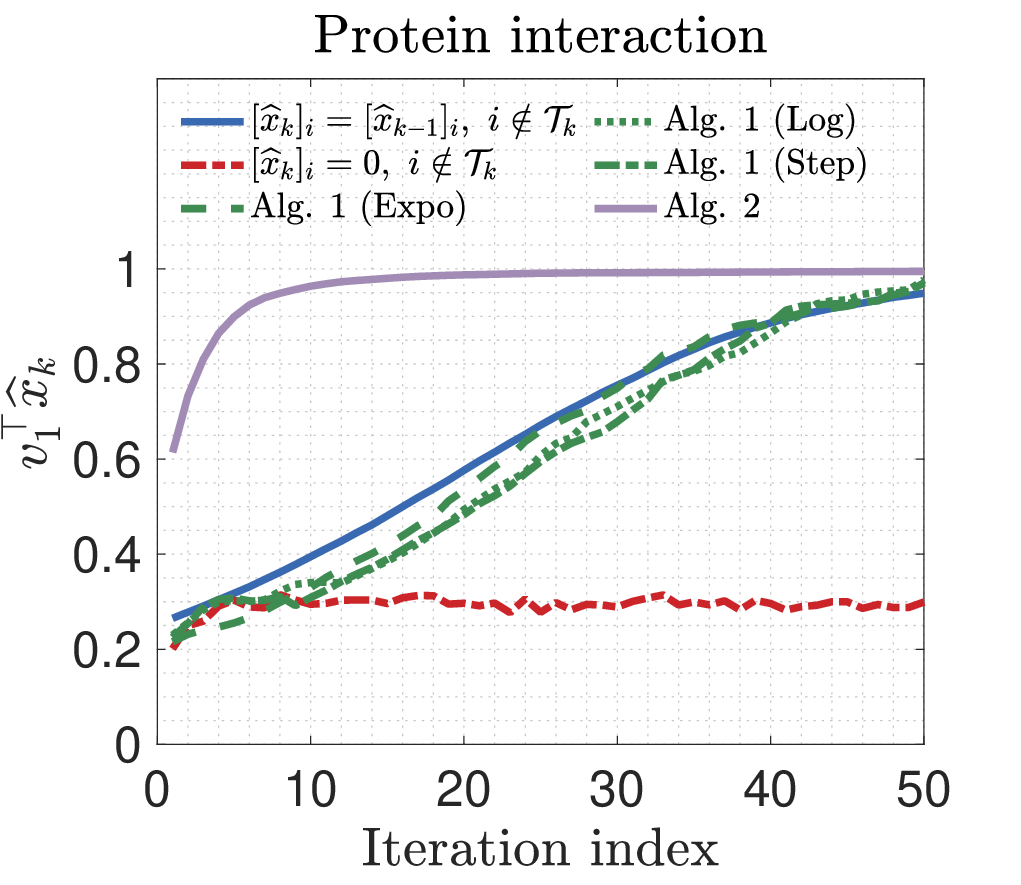}   
				\includegraphics[width=0.32\linewidth]{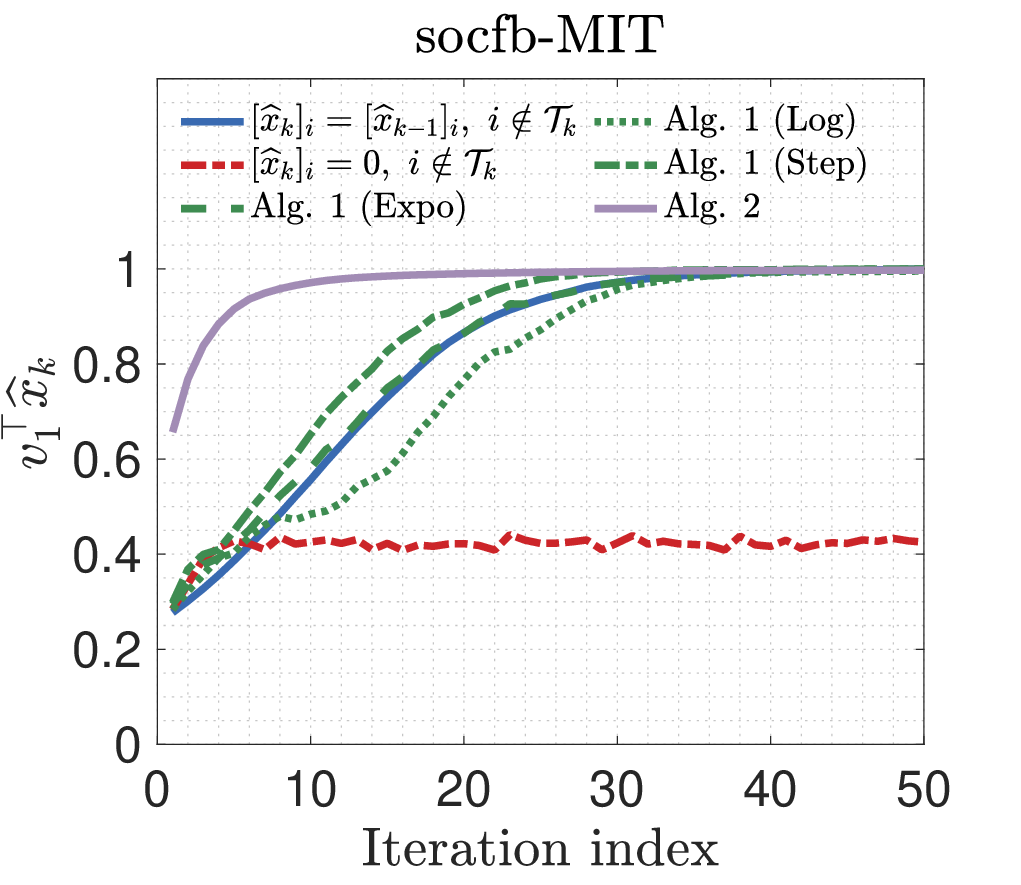}    
				\includegraphics[width=0.32\linewidth]{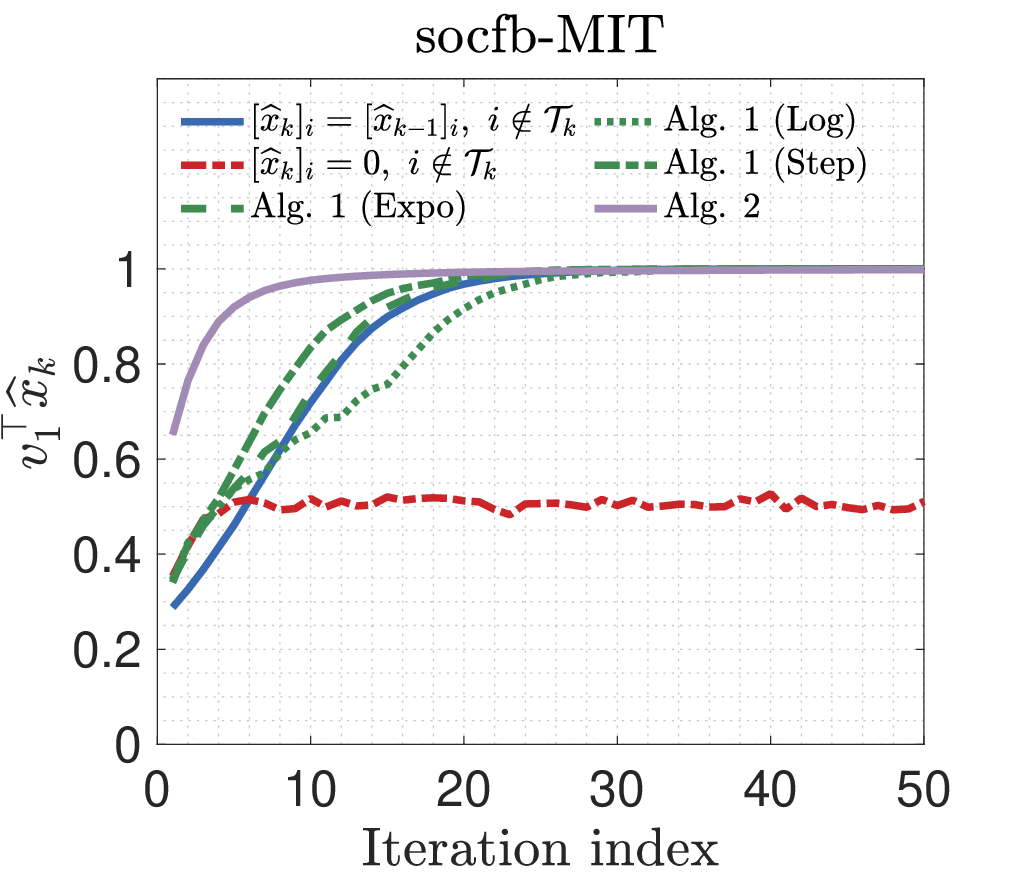} 
				\caption{{\it Convergence plots for the sparse adjacency matrices. Left: $\mathbb{E}[T]/N=0.1$. Center: $\mathbb{E}[T]/N=0.2$. Right: $\mathbb{E}[T]/N=0.3$.}}\label{fig9}
			\end{figure*}
			
			Figure \ref{fig9} plots the modulus of $\bm{v}_1^\top \bm{\widehat{x}}_{k}$ as a function of 
			the iteration index $k=1,\ldots,50$, for the three networks described above. In 
			agreement with the results for the dense model problems, Algorithm \ref{alg2} performed 
			the best; especially for lower values of $\mathbb{E}[T]$. On the other hand, asynchronous 
			power iteration generally performed better than all variants of Algorithm \ref{alg1} 
			due to the smaller spectral gap of the adjacency matrices $\mathbf{A}$ compared to the previous 
			cases.
			
			\subsection{Further discussion}
			
			The two partial power iteration algorithms presented in this paper can be a good  
			fit for hybrid cloud architectures implemented via the controller-worker model, 
			a parallel computing paradigm for distributed memory architectures  where the 
			controller is responsible for facilitating parallel computations by distributing 
			and receiving data to/from the workers \cite{pacheco2011introduction}. Each worker 
			operates independently from the rest of the workers and communicates directly only 
			with the controller. 
			In large-scale installations, each worker can represent one or more servers, connected 
			in a flat or hierarchical topology, and the servers incorporated in each worker might 
			exercise different processing capabilities and have varying latency due to their reliance 
			on commodity hardware required for cost-effectiveness and oversubscription, \emph{i.e.}, workers 
			are generally executing several third-party tasks that surpass the total processing capabilities. 
			As a result, while some workers might have already finished their 
			computations, some other might have not yet initiated their computations 
			\cite{severinson2022straggler}. 
			
			The phenomenon where some workers are non-responsive or take significantly longer to complete 
			their tasks compared to others -thus leading to delays in the overall completion time- is known 
			as \emph{straggling}, and these workers are commonly referred to as \emph{stragglers} 
			\cite{dean2013tail}. Stragglers degrade the parallel efficiency of distributed systems and can 
			increase the required wall-clock time considerably. Furthermore, the presence of straggling is dynamic 
			and unpredictable \cite{javadi2017dial,maji2015ice,kalantzis2025straggler}, and its effect on 
			throughput can vary considerably \cite{wang2018peeking,severinson2022straggler}. Controller-worker 
			models can find various forms of implementations also in federated learning for 
			the purpose of building AI models~\cite{Wen2023,roth2022nvidia}. Straggling appears naturally due 
			to delay in processing or communication, given that federated learning is being carried out globally 
			in the practice of building large-scale machine learning and AI models~\cite{roth2022nvidia}.

			\section{Conclusion} \label{sec:conclusions}
			
			In this paper we considered the problem of computing the dominant eigenvector of a symmetric matrix $\mathbf{A}$ subject to the constraint that matrix-vector products with $\mathbf{A}$ are only partially observed, motivated by the phenomenon 
			of straggling appearing in distributed computing infrastructures when the matrix-vector products with the matrix $\mathbf{A}$ are computed in parallel on remote processing elements. We presented and analyzed two algorithms, where the first algorithm exploits a probabilistic model to switch between two distinct models to fill in missing partial matrix-vector entries, and the second algorithm averages a cascade of partial matrix-vector products. Our numerical experiments demonstrate that the proposed algorithms can outperform inexact power iteration based on asynchronous modeling. 
			
			The algorithms proposed in this paper are limited to symmetric matrices and can compute only a single (the dominant) eigenvector.
			As part of our future work we plan to extend the algorithms and analysis presented in this paper towards the simultaneous computation of more than one eigenpairs via leveraging partial matrix-multivector models of subspace iteration. Additionally, we plan on implementing the algorithms presented in this paper on distributed computing cloud and cluster environments in order to benchmark the performance of partial power iteration against classical power iteration and its asynchronous variants. Finally, we plan to explore the deployment of the ideas presented in this paper on controller-worker models found in federated learning for the purpose of building large-scale machine learning and AI models~\cite{Wen2023,roth2022nvidia}. 

			\bibliographystyle{plain} 

			\appendix

			\section{Additional Experiments}
			\label{appendix_a} 
			
			\subsection{Sample mean and standard deviation of selected experiments}
			
			\begin{figure*}[ht]
				\centering
				\includegraphics[width=0.30\linewidth]{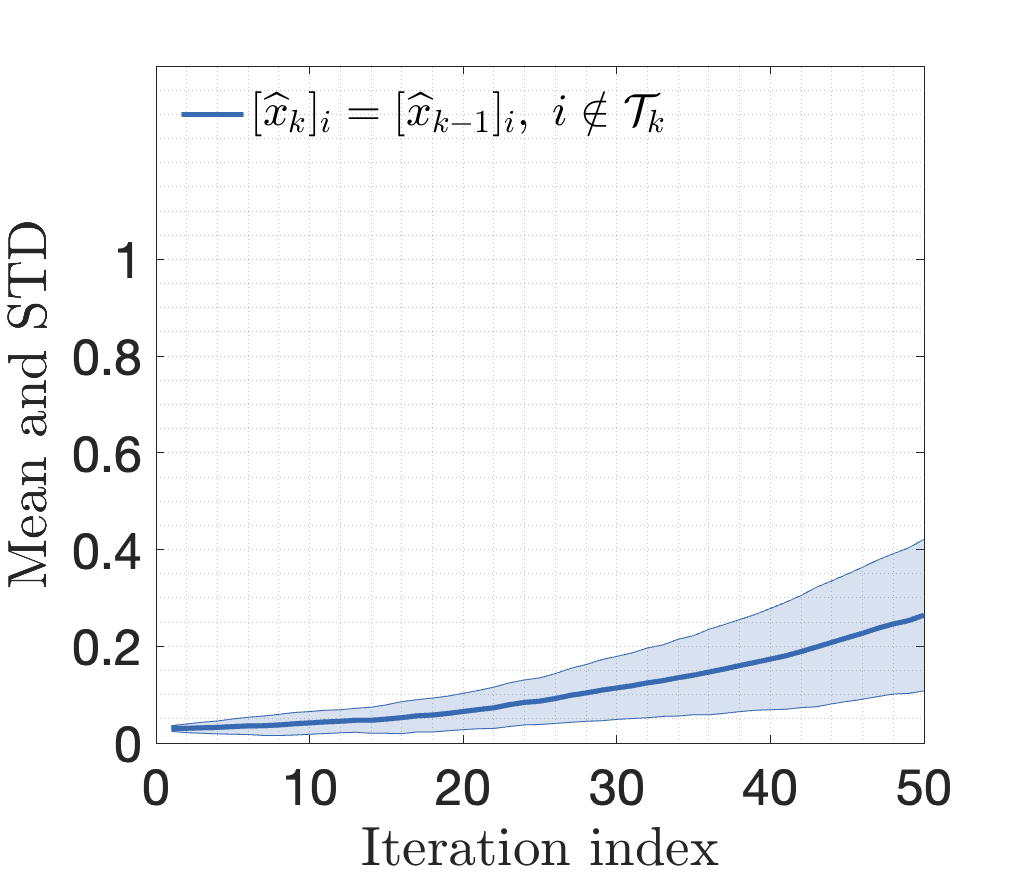}
				\includegraphics[width=0.30\linewidth]{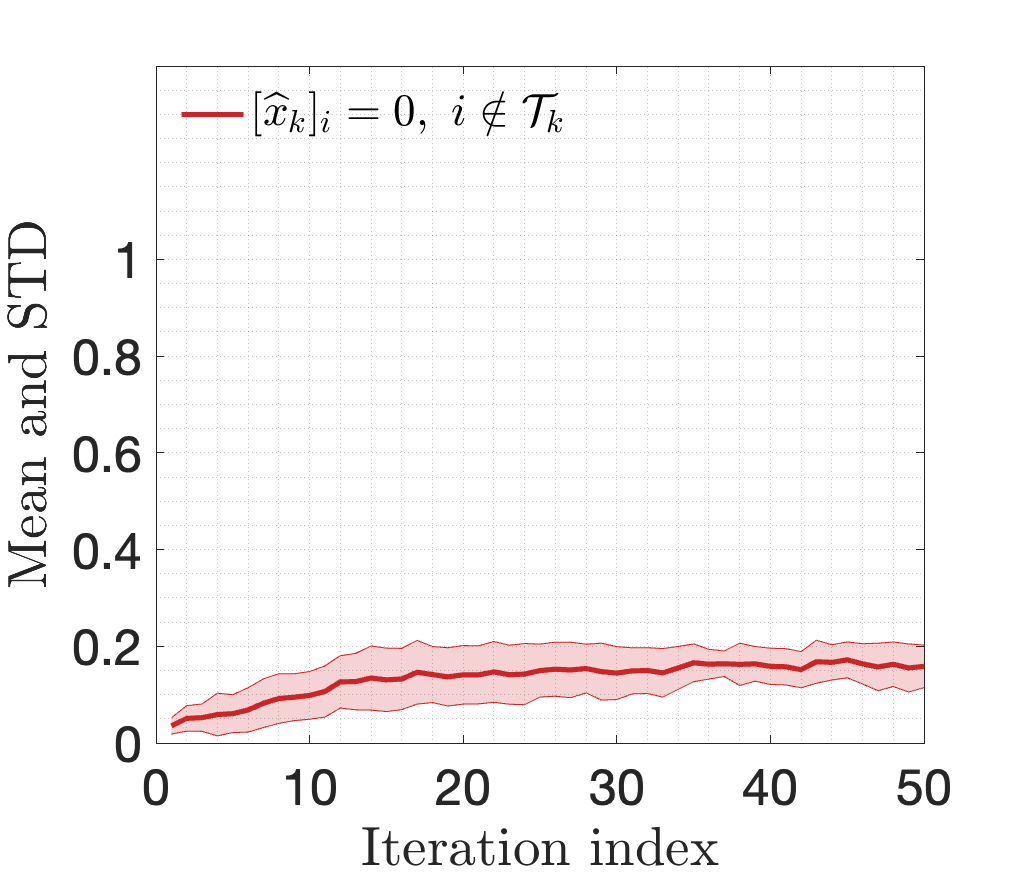}
				\includegraphics[width=0.30\linewidth]{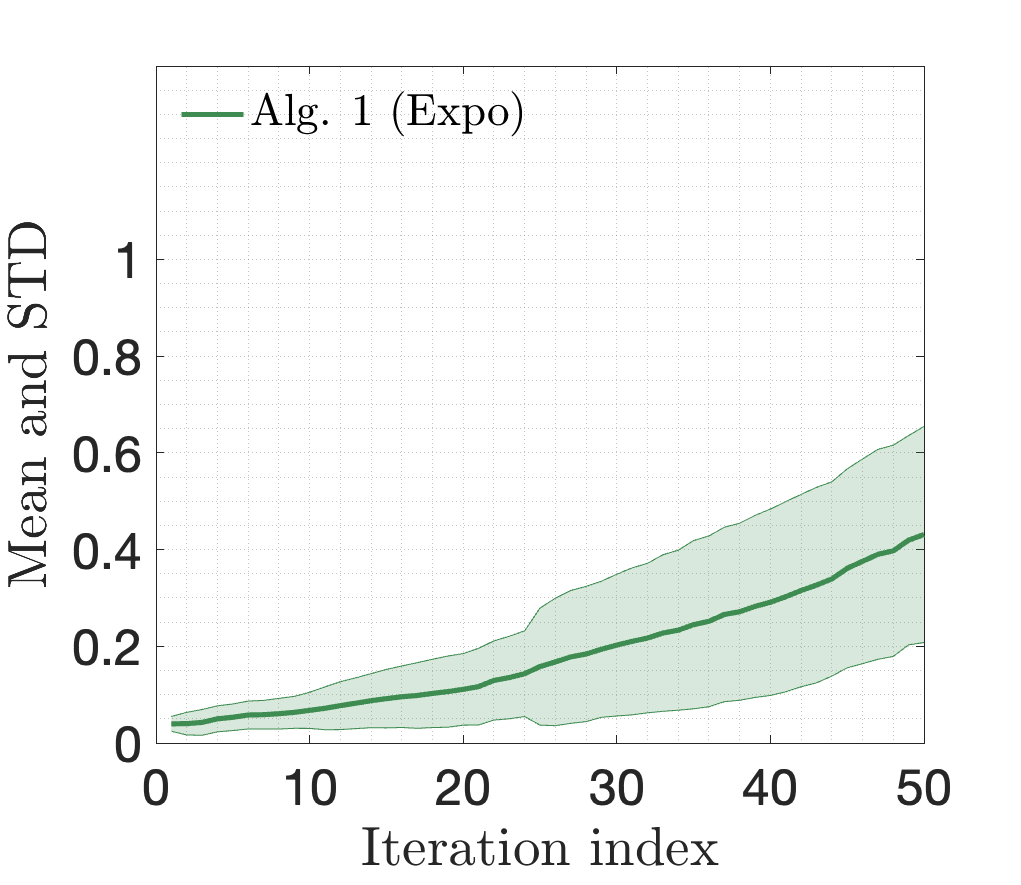}
				\includegraphics[width=0.30\linewidth]{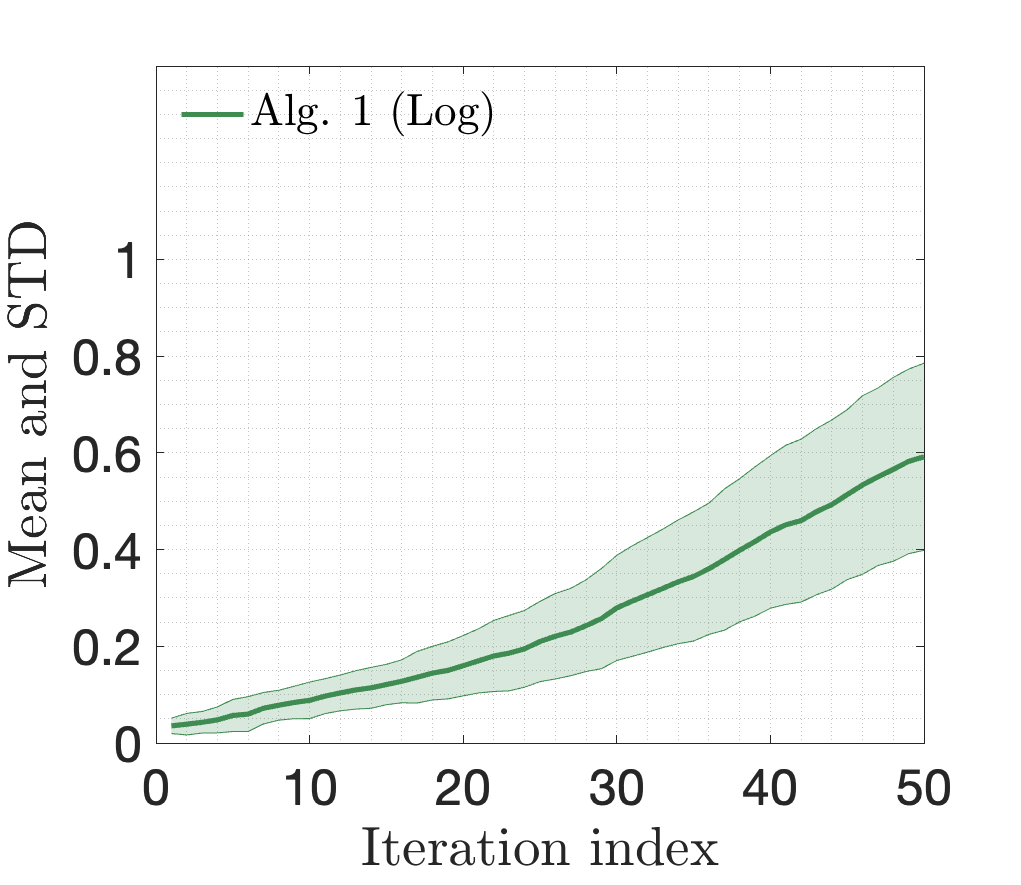}
				\includegraphics[width=0.30\linewidth]{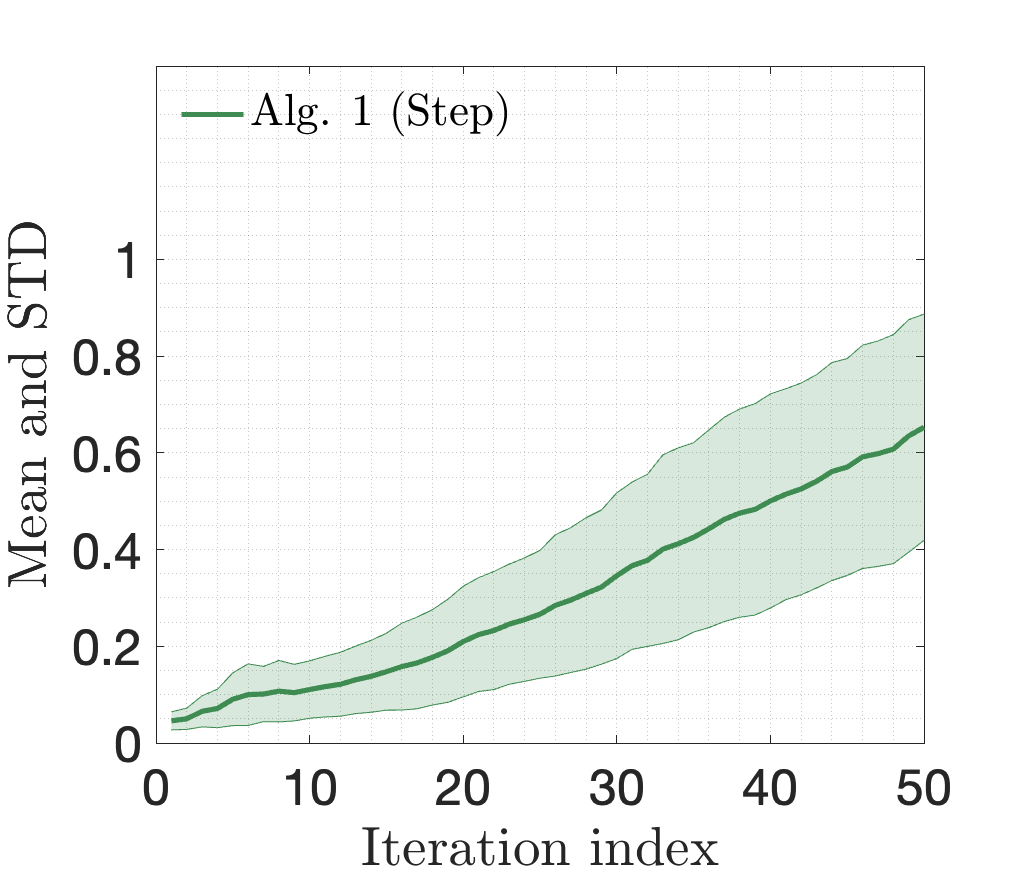}
				\includegraphics[width=0.30\linewidth]{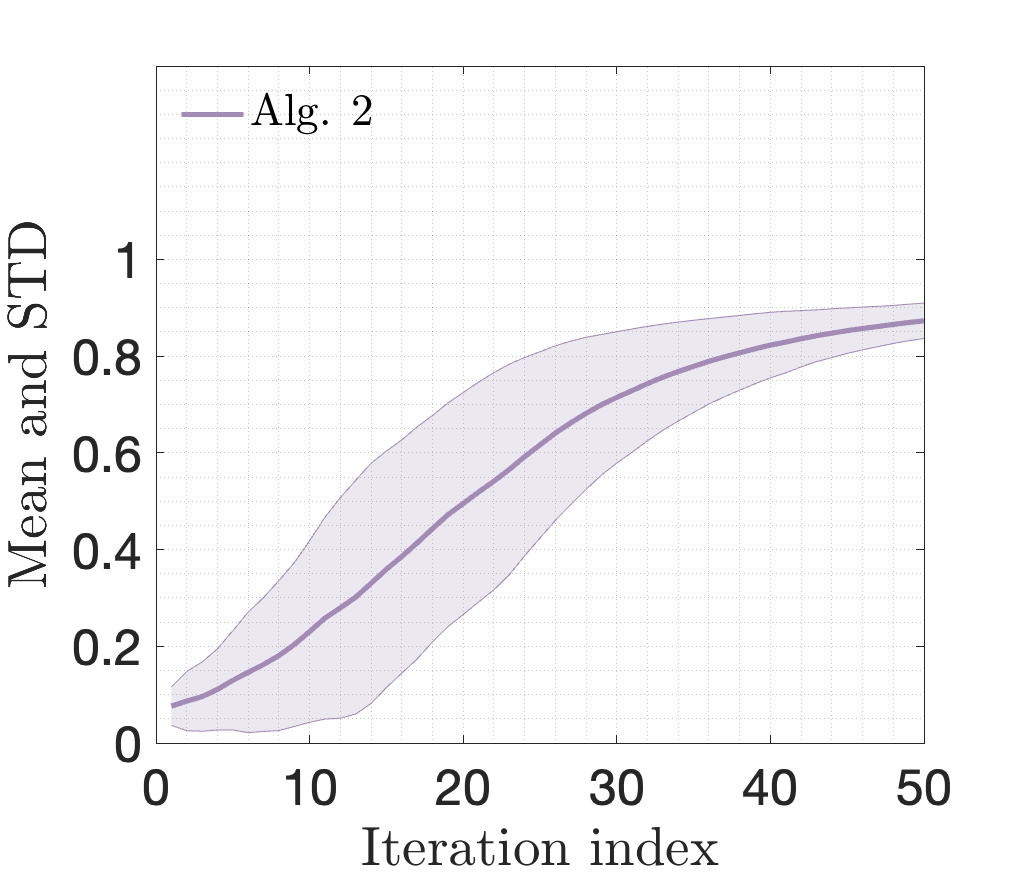}
				\caption{{\it Mean and standard deviation over twenty runs. The spectrum of $A$ is designed as 
						$\lambda_1=1,\ \lambda_j \in (0.0,0.5),\ j=2,\ldots,N$, and $\mathbb{E}[T]/N=0.1$.}}\label{figg1}
			\end{figure*}
			
			\begin{figure*}[ht]
				\centering
				\includegraphics[width=0.30\linewidth]{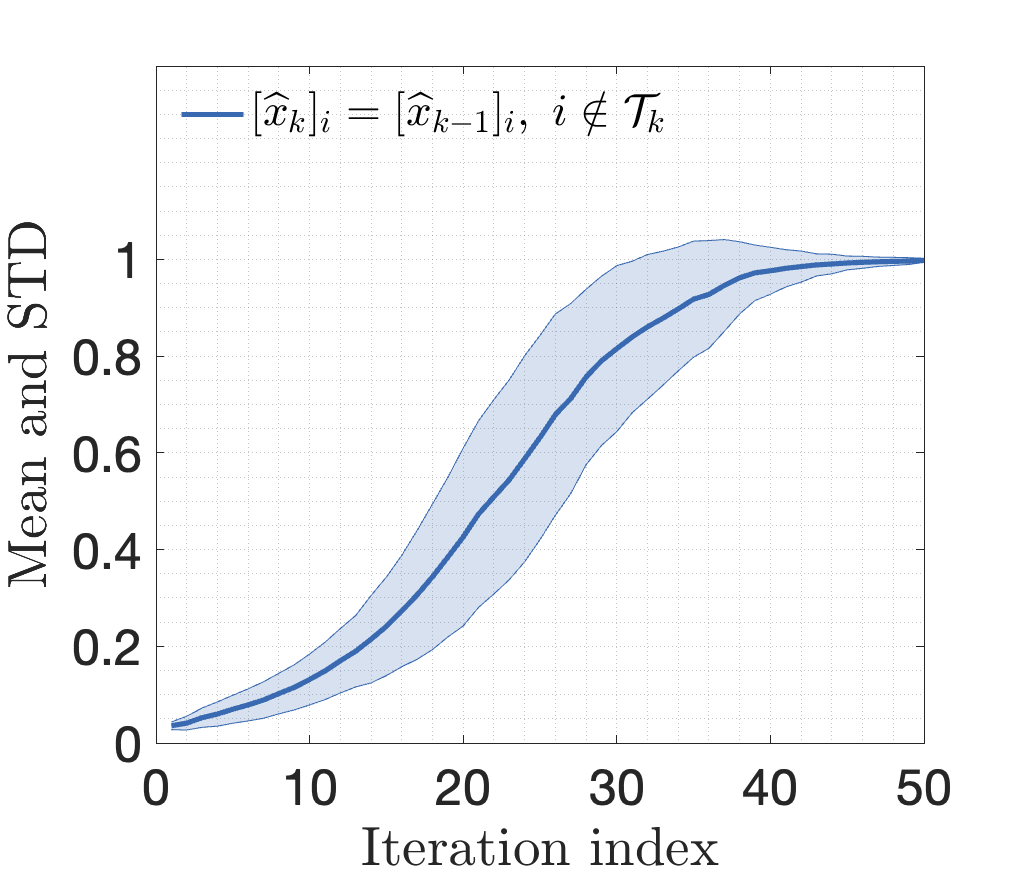}
				\includegraphics[width=0.30\linewidth]{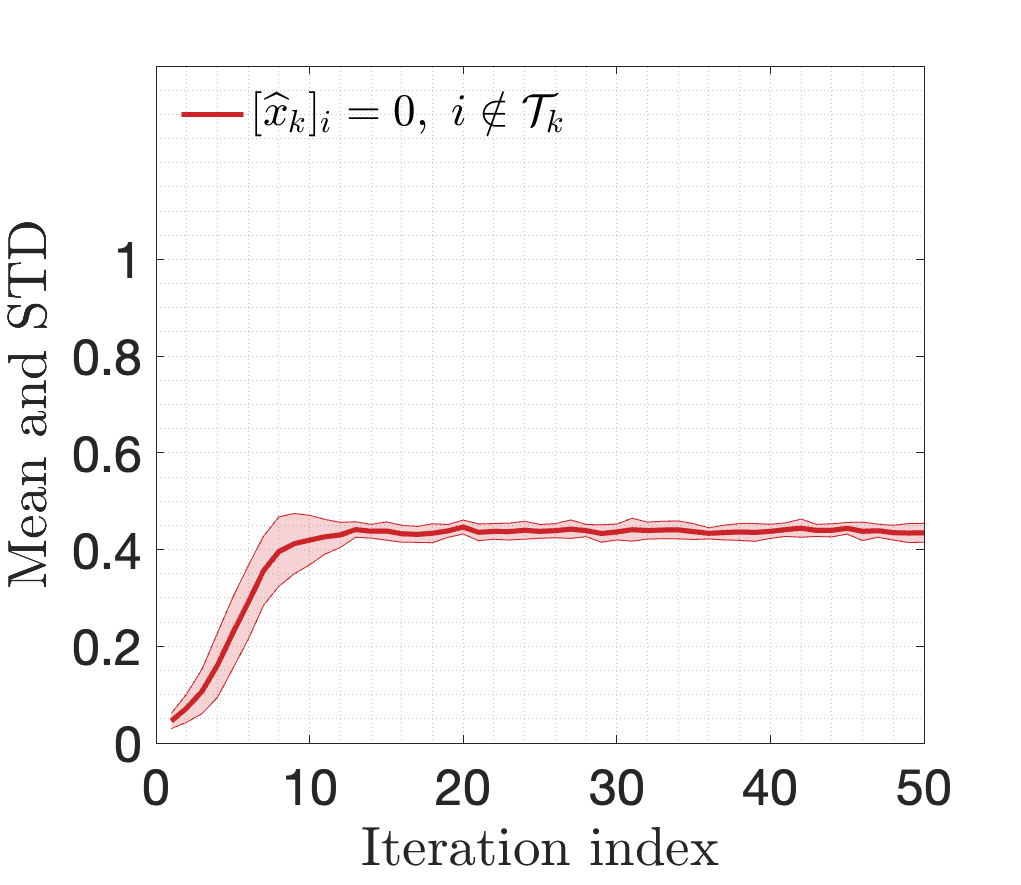}
				\includegraphics[width=0.30\linewidth]{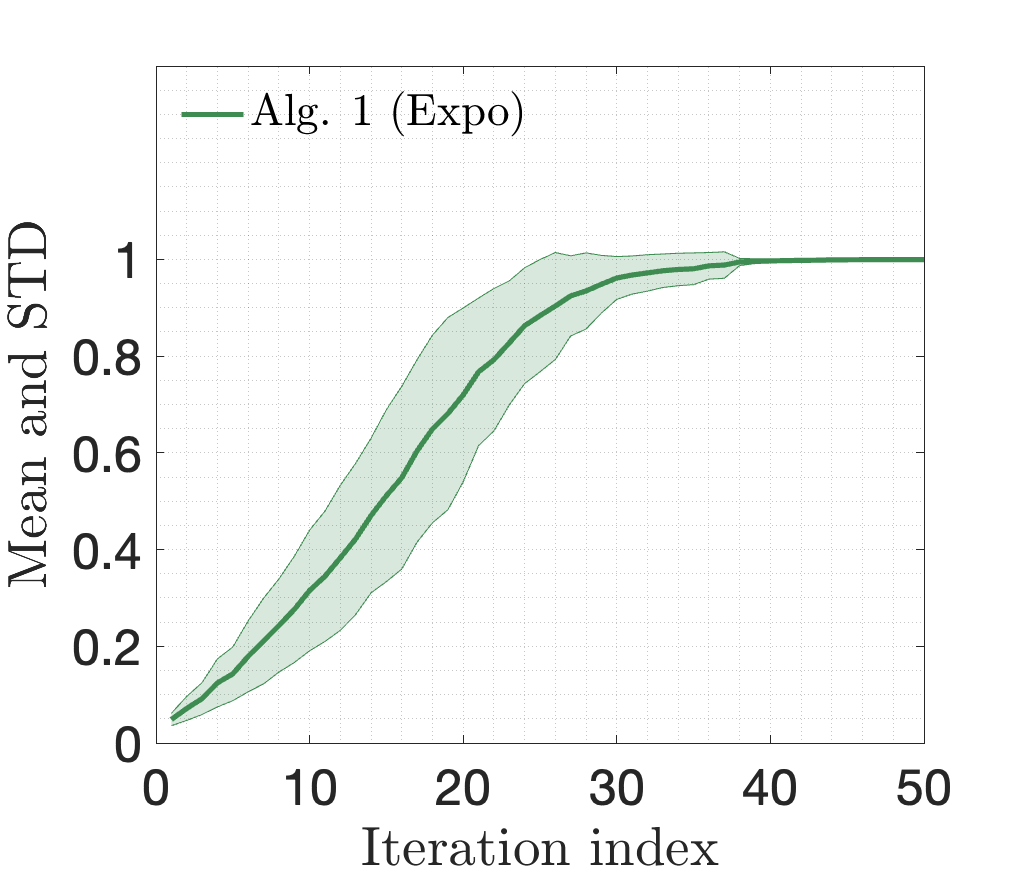}
				\includegraphics[width=0.30\linewidth]{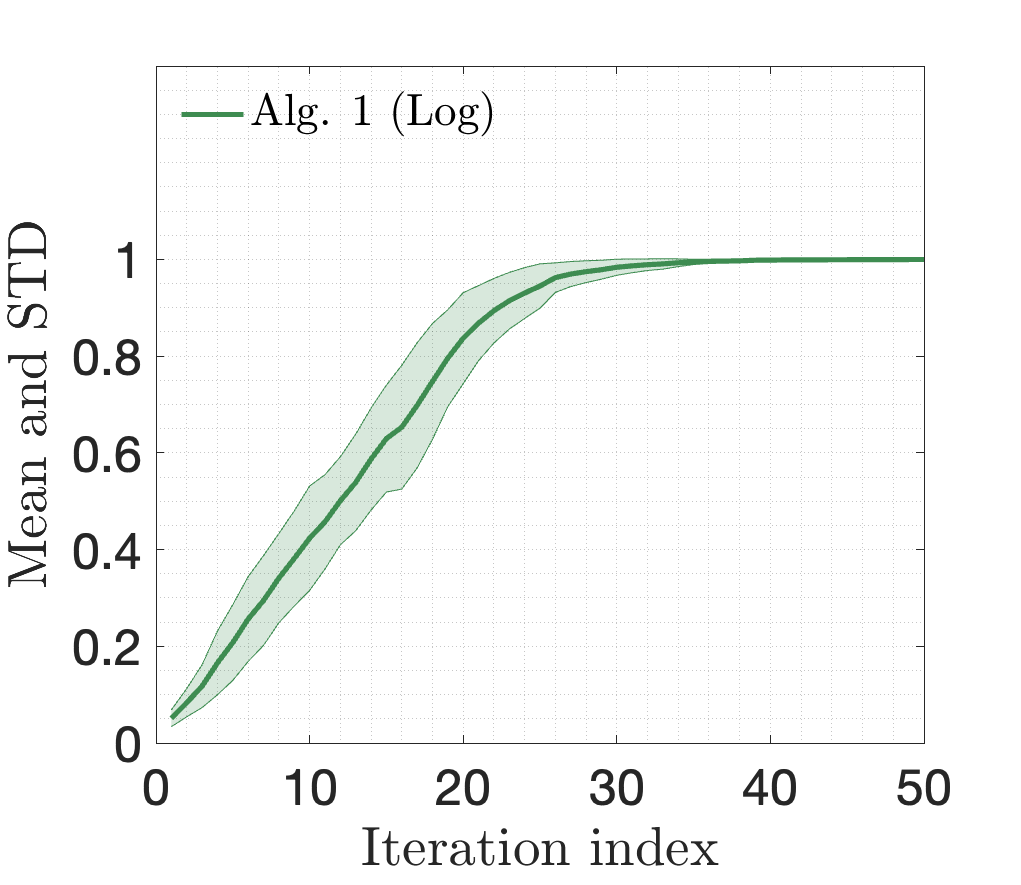}
				\includegraphics[width=0.30\linewidth]{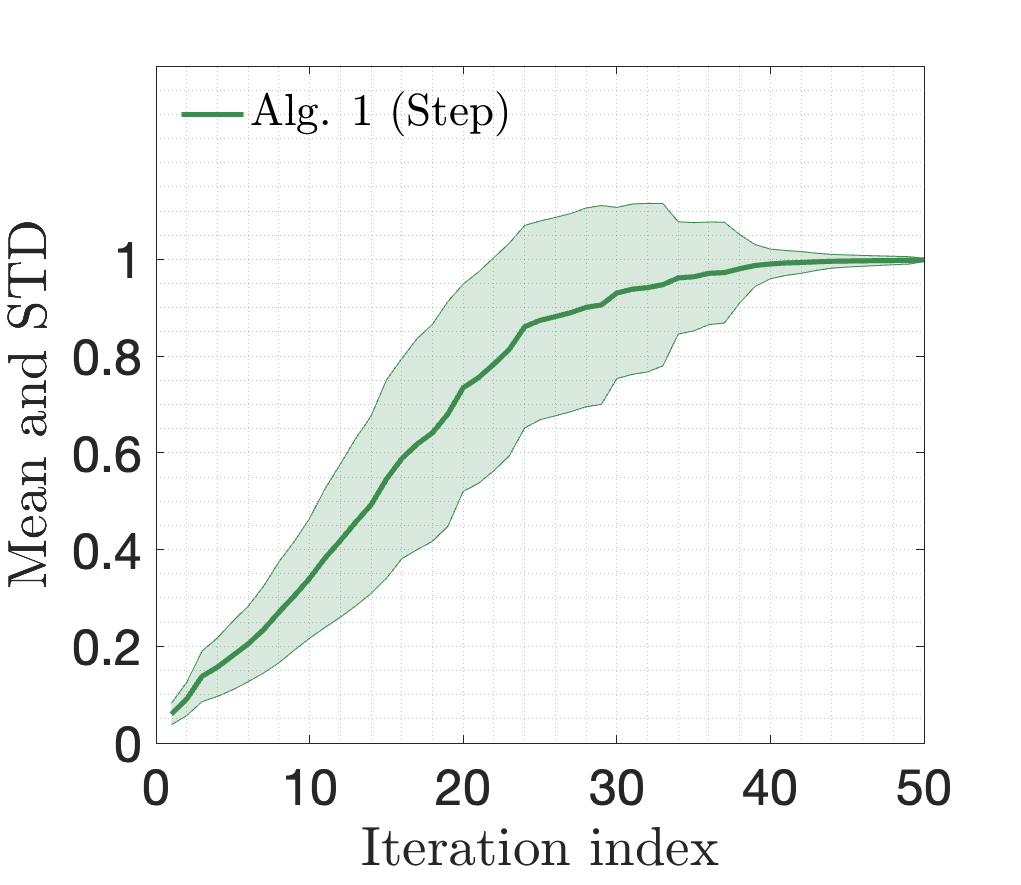}
				\includegraphics[width=0.30\linewidth]{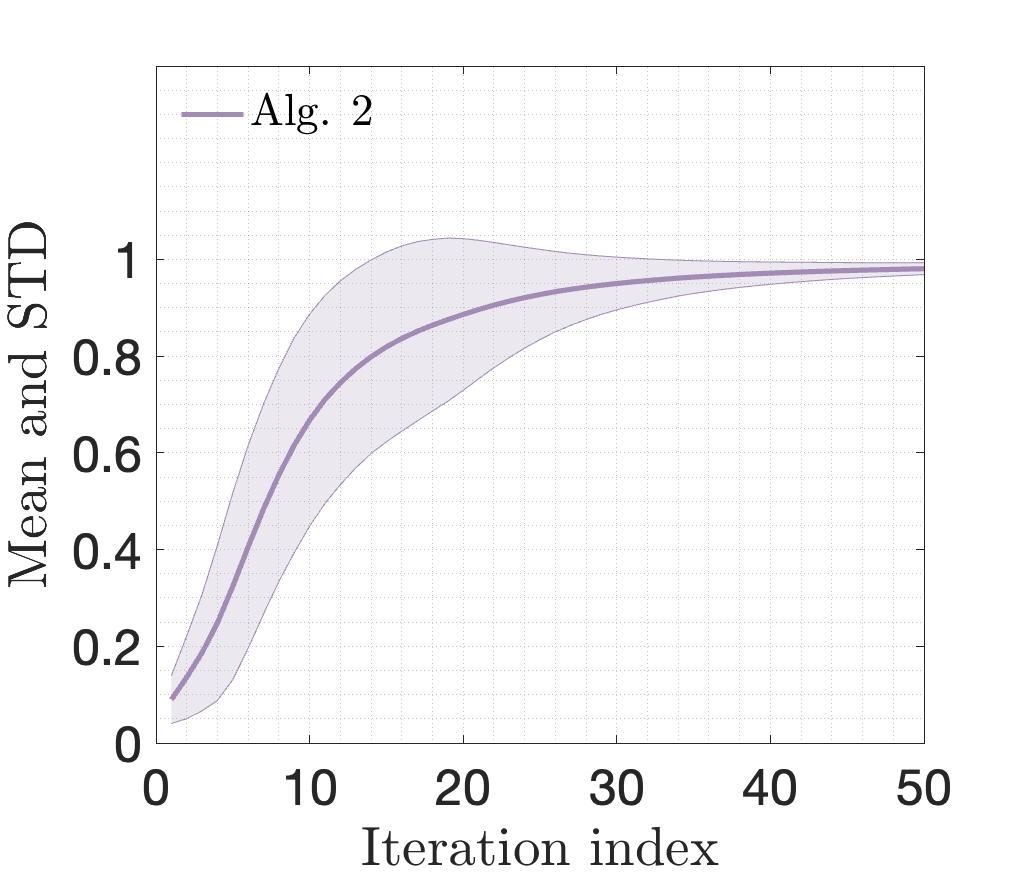}
				\caption{{\it Mean and standard deviation over twenty runs. The spectrum of $A$ is designed as 
						$\lambda_1=1,\ \lambda_j \in (0.0,0.5),\ j=2,\ldots,N$, and $\mathbb{E}[T]/N=0.3$.}}\label{figg2}
			\end{figure*}

			\begin{figure*}[ht]
				\centering
				\includegraphics[width=0.30\linewidth]{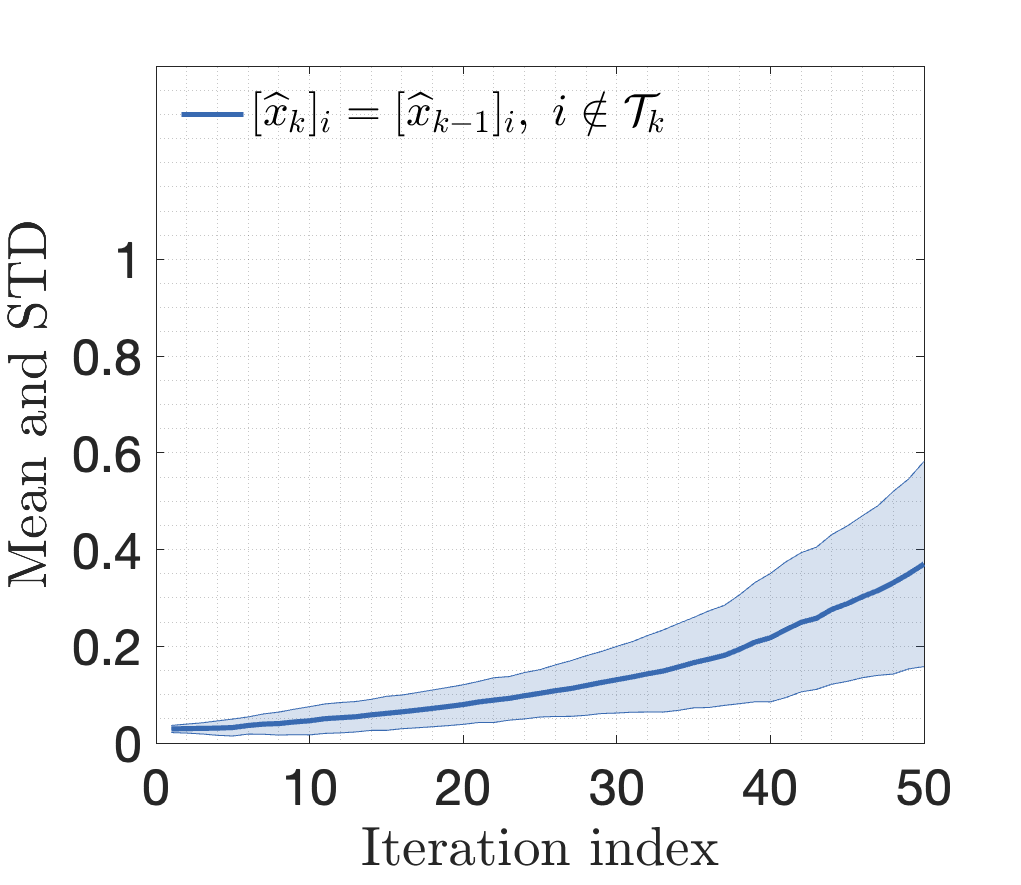}
				\includegraphics[width=0.30\linewidth]{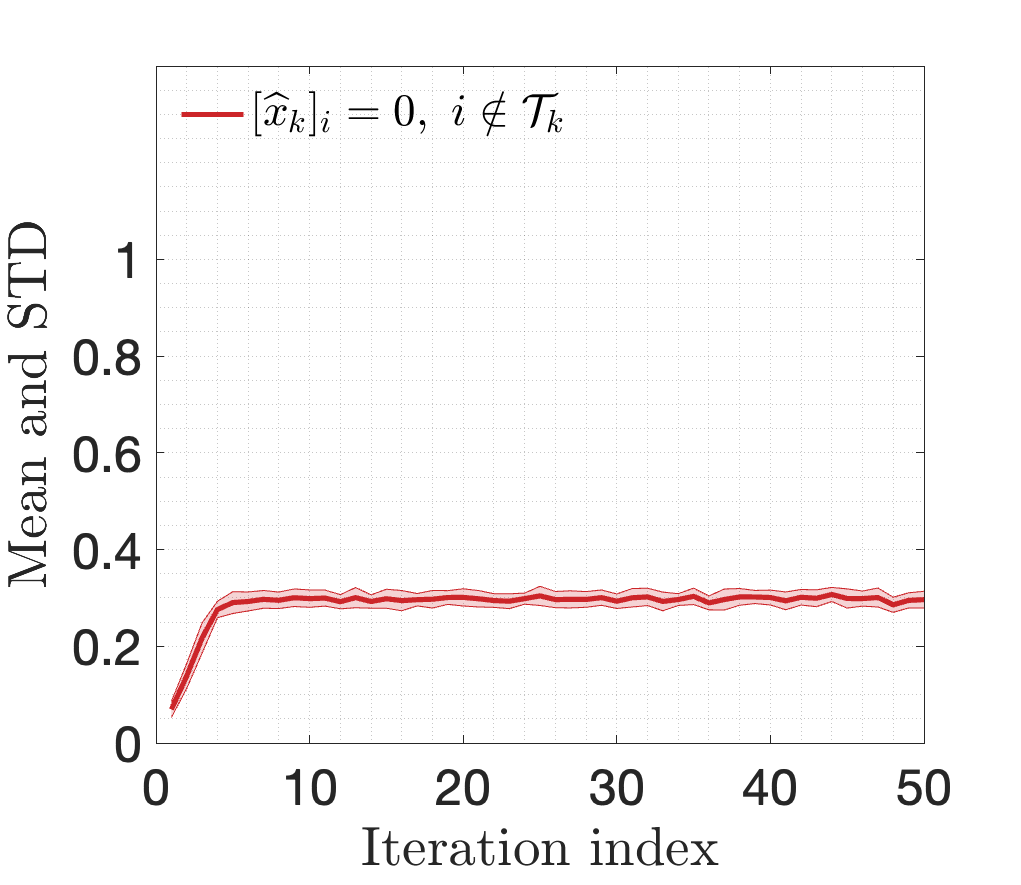}
				\includegraphics[width=0.30\linewidth]{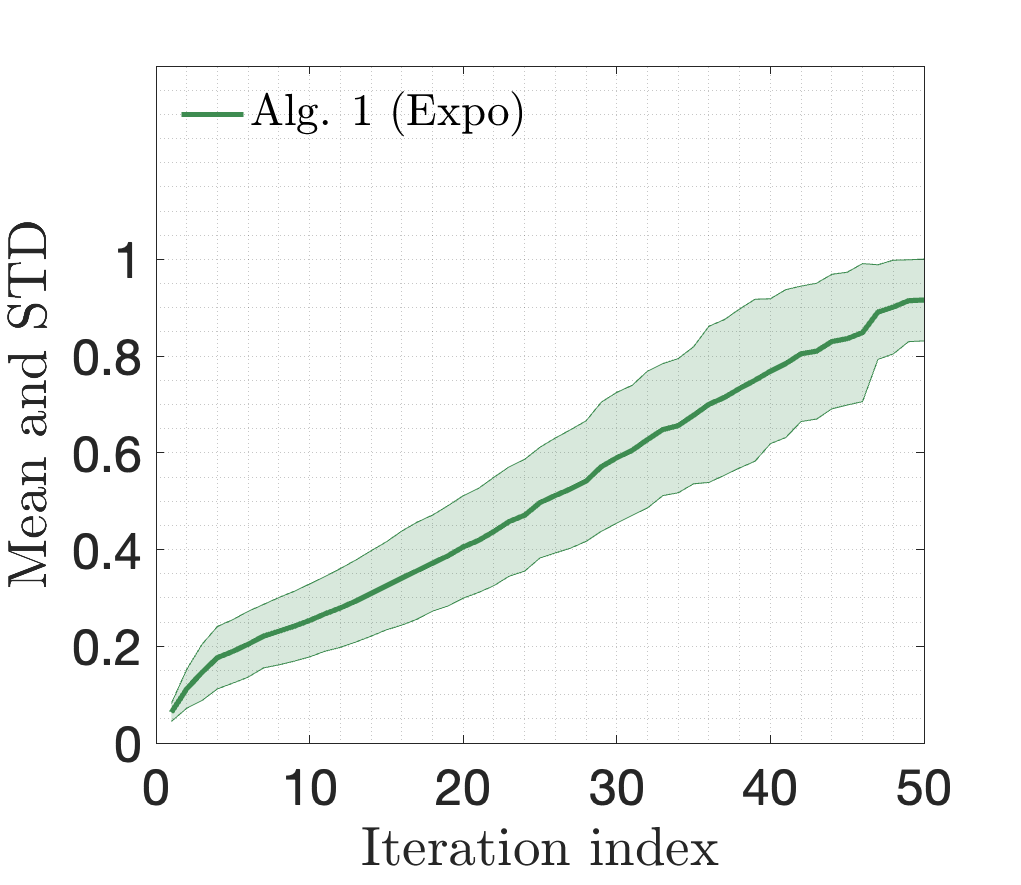}
				\includegraphics[width=0.30\linewidth]{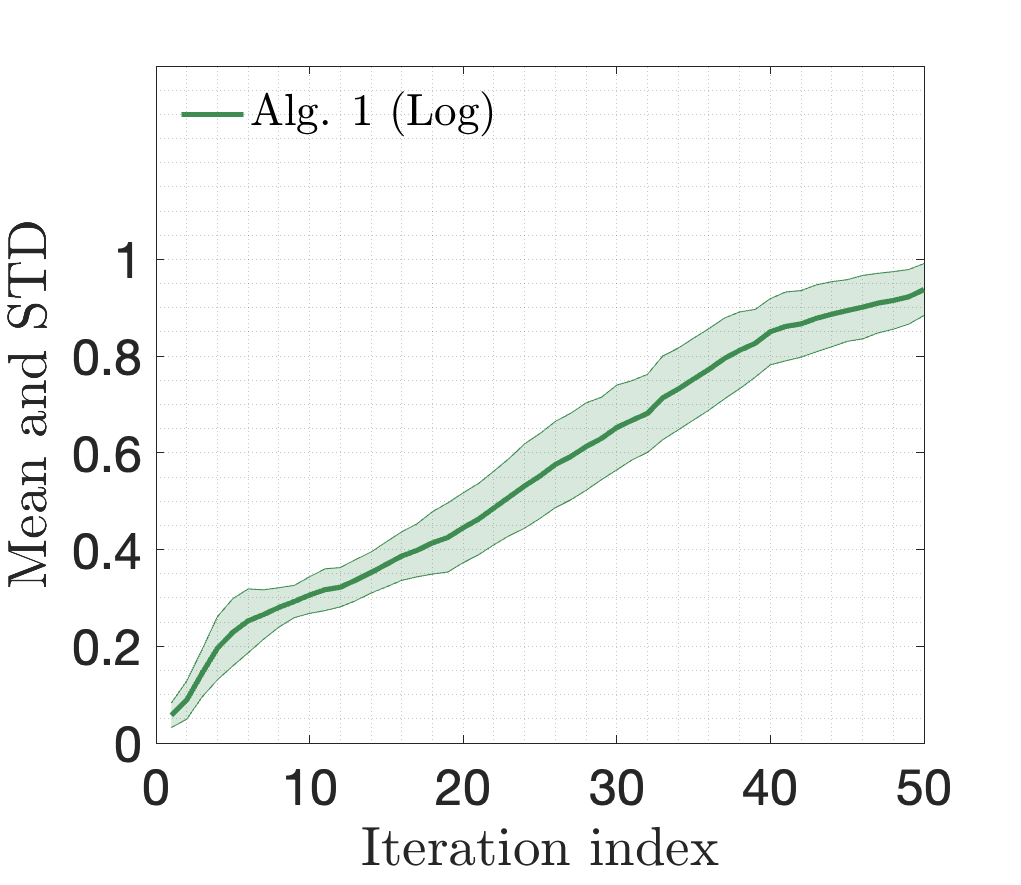}
				\includegraphics[width=0.30\linewidth]{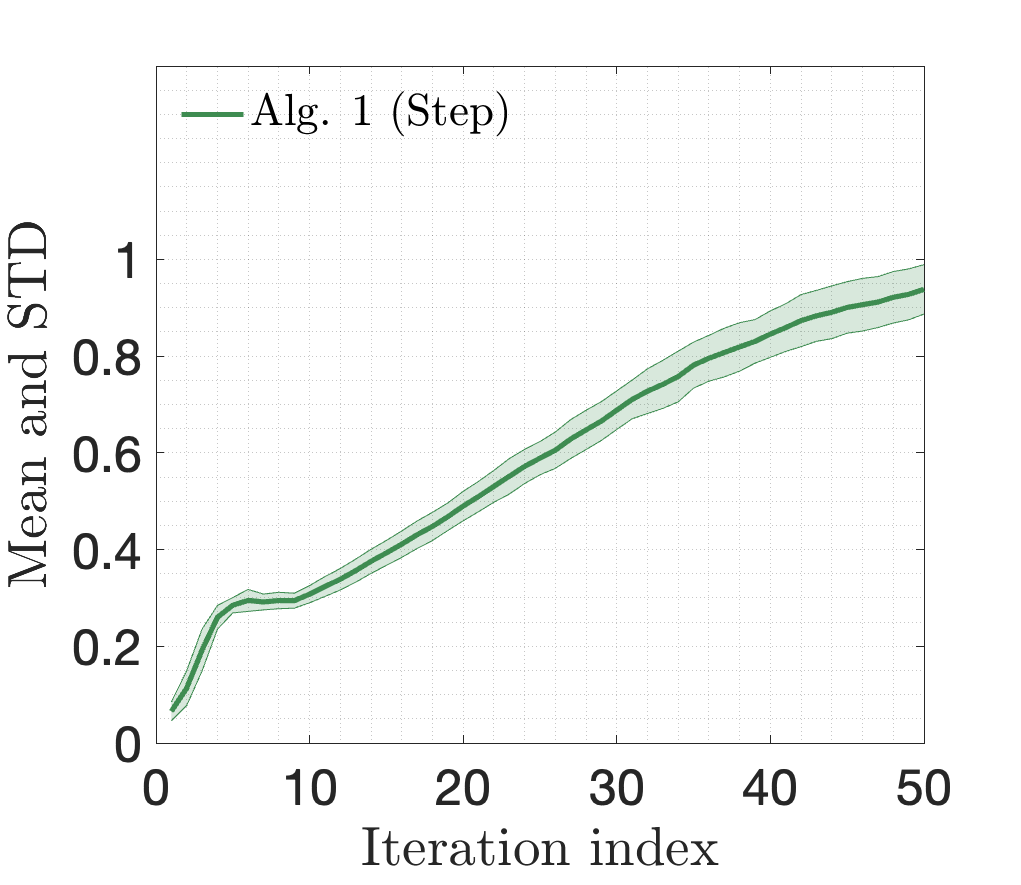}
				\includegraphics[width=0.30\linewidth]{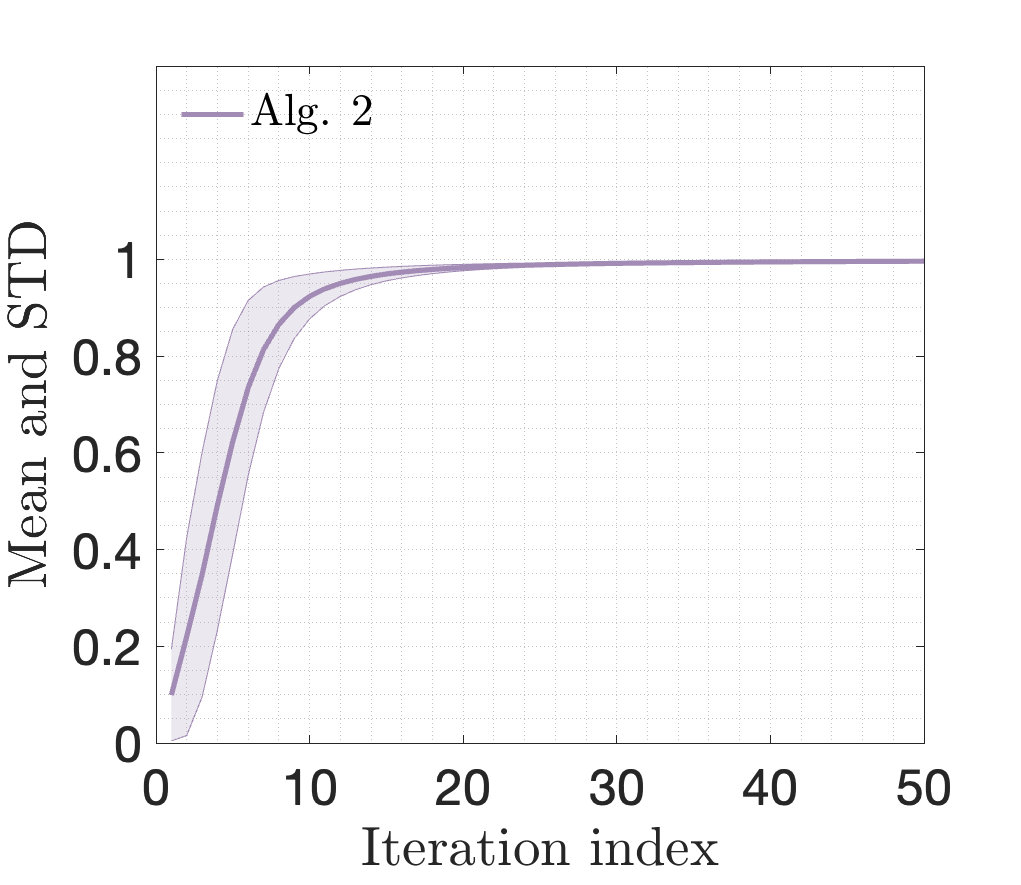}
				\caption{{\it Mean and standard deviation over twenty runs. The spectrum of $A$ is designed as 
						$\lambda_1=1,\ \lambda_j \in (-0.5,0.5),\ j=2,\ldots,N$, and $\mathbb{E}[T]/N=0.1$.}}\label{figg3}
			\end{figure*}
			
			\begin{figure*}[ht]
				\centering
				\includegraphics[width=0.30\linewidth]{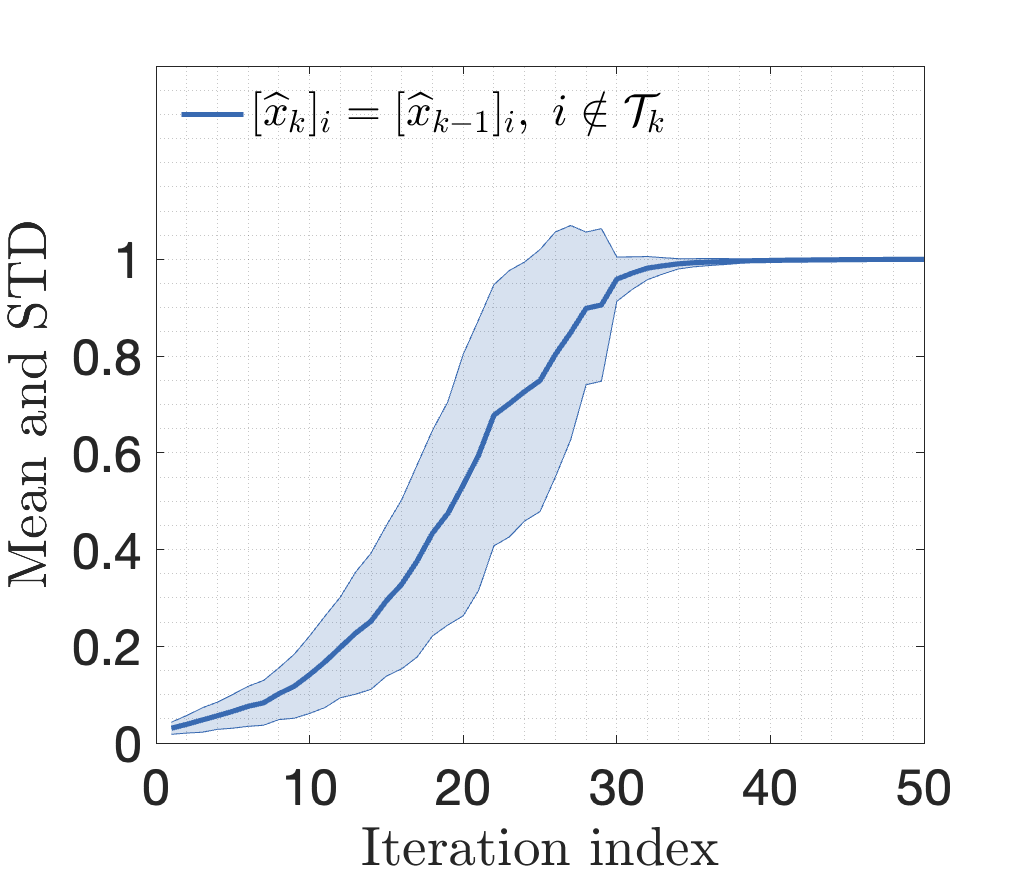}
				\includegraphics[width=0.30\linewidth]{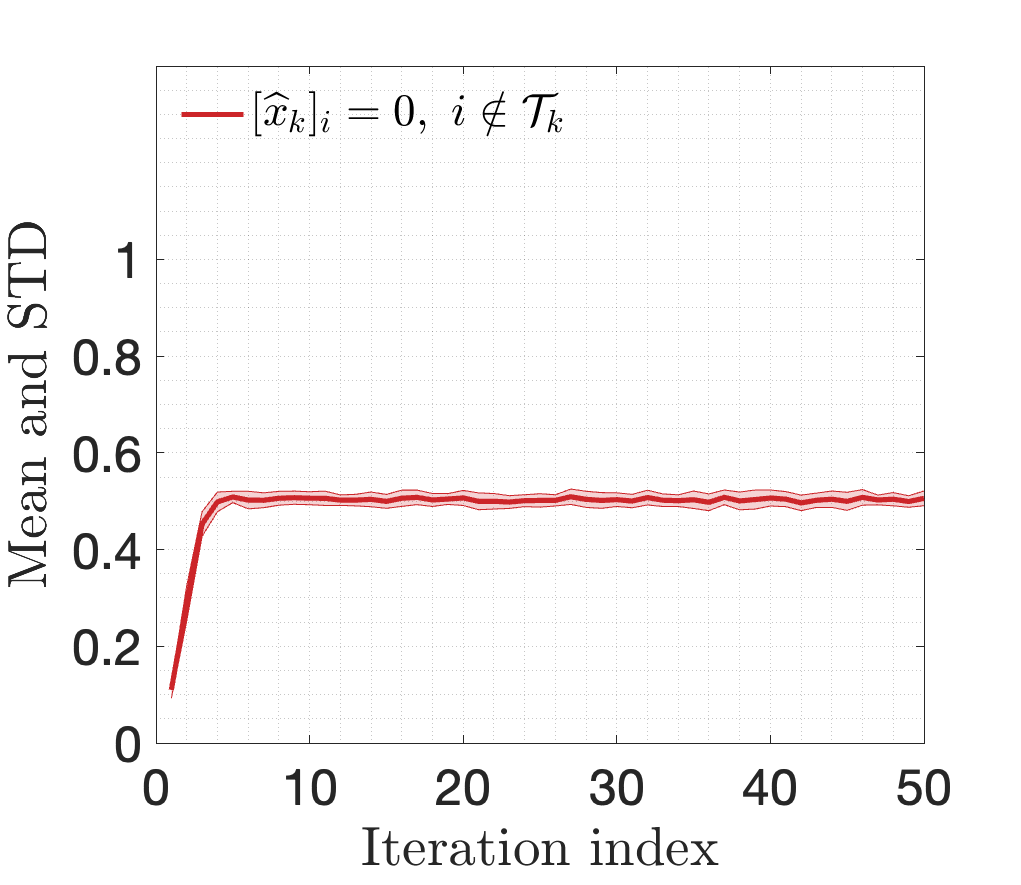}
				\includegraphics[width=0.30\linewidth]{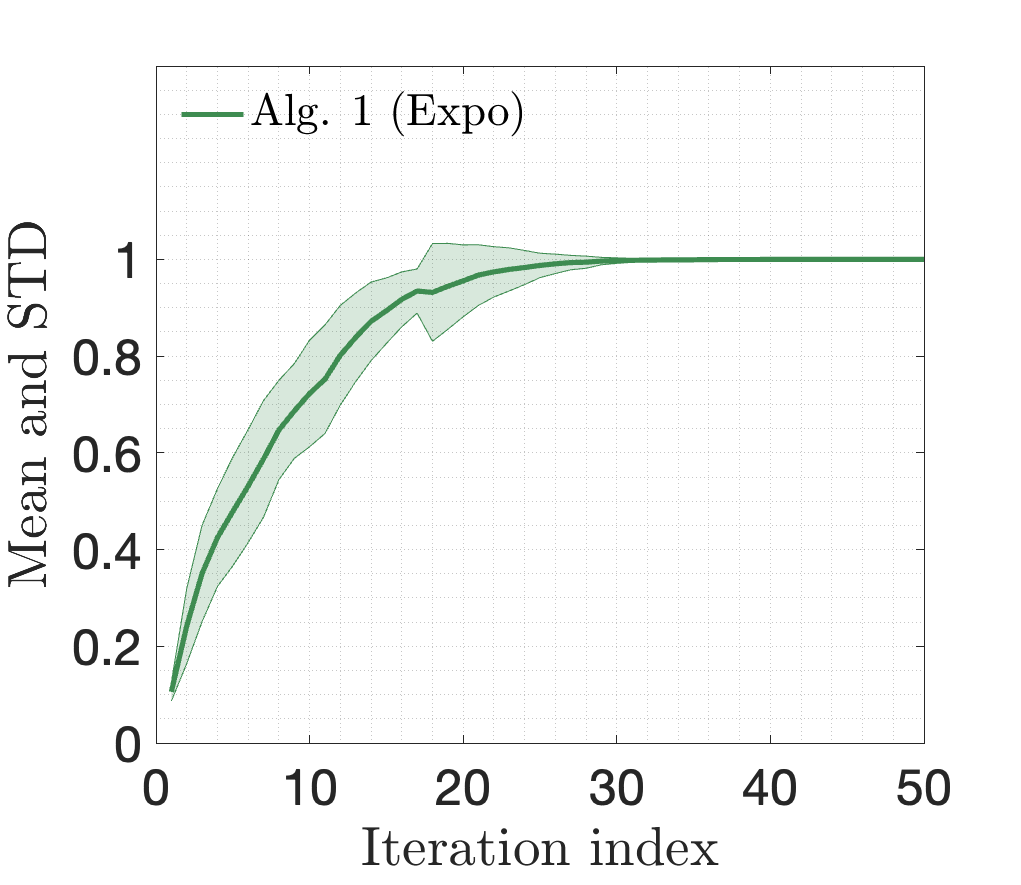}
				\includegraphics[width=0.30\linewidth]{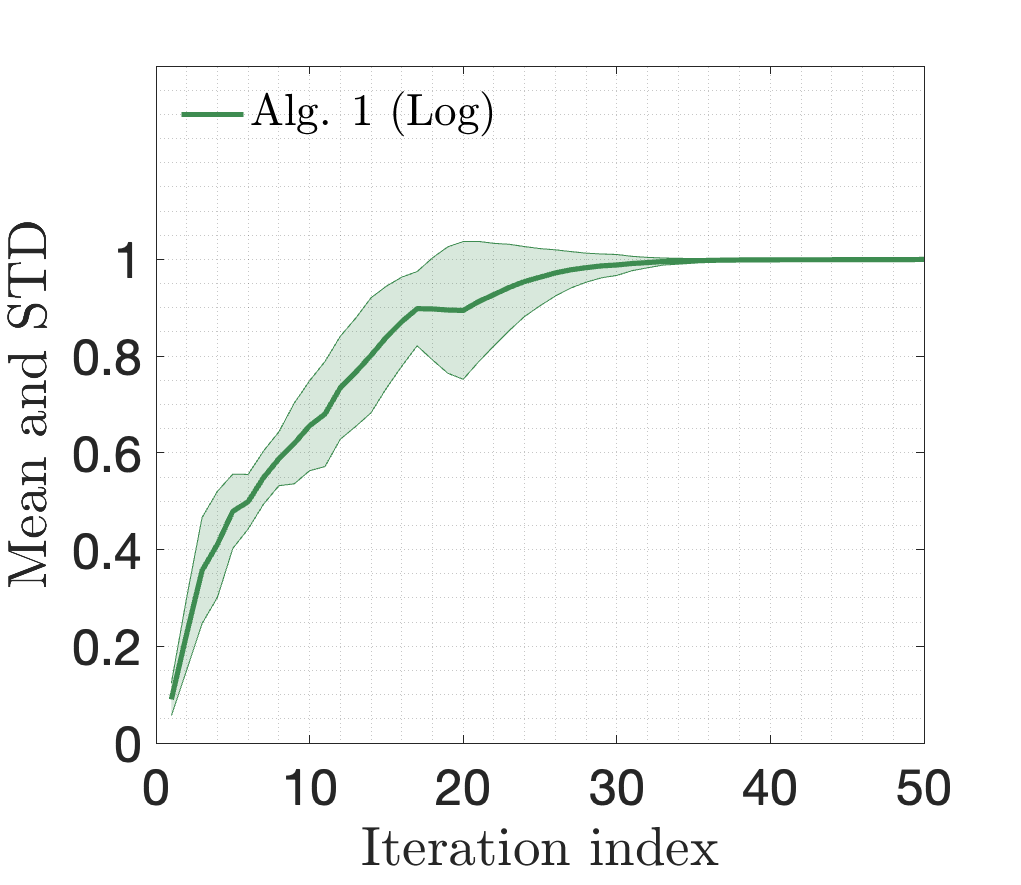}
				\includegraphics[width=0.30\linewidth]{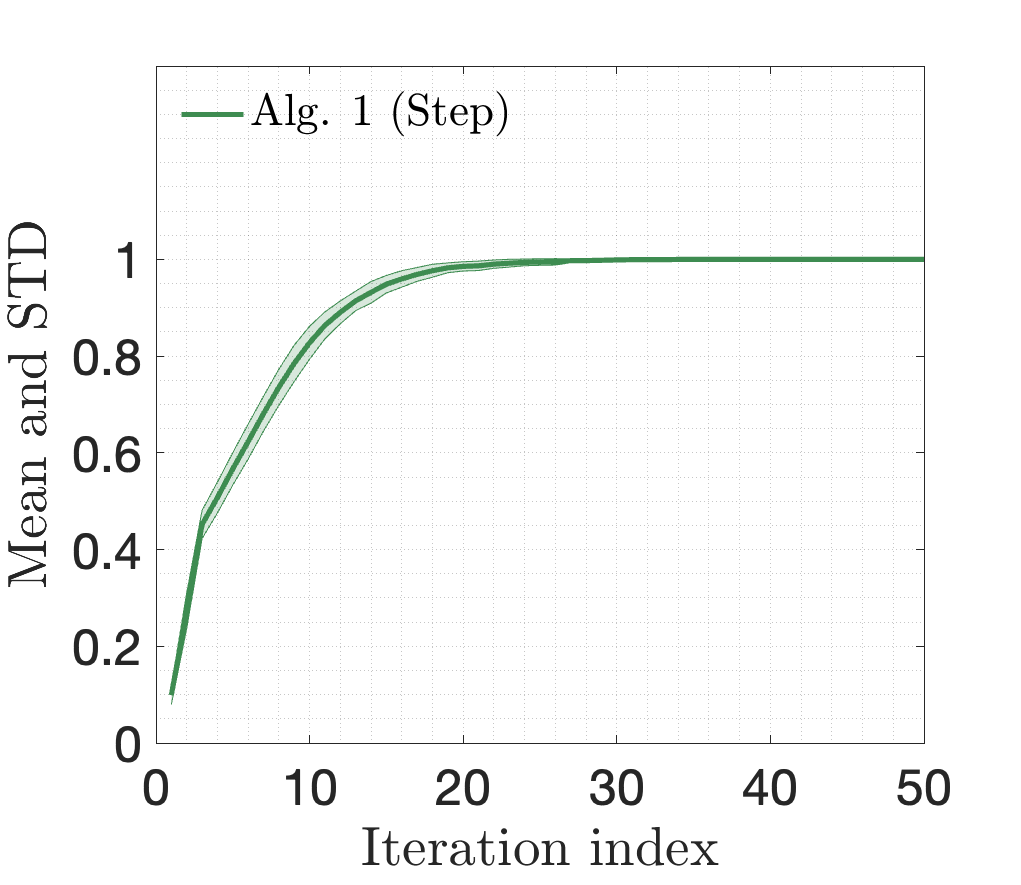}
				\includegraphics[width=0.30\linewidth]{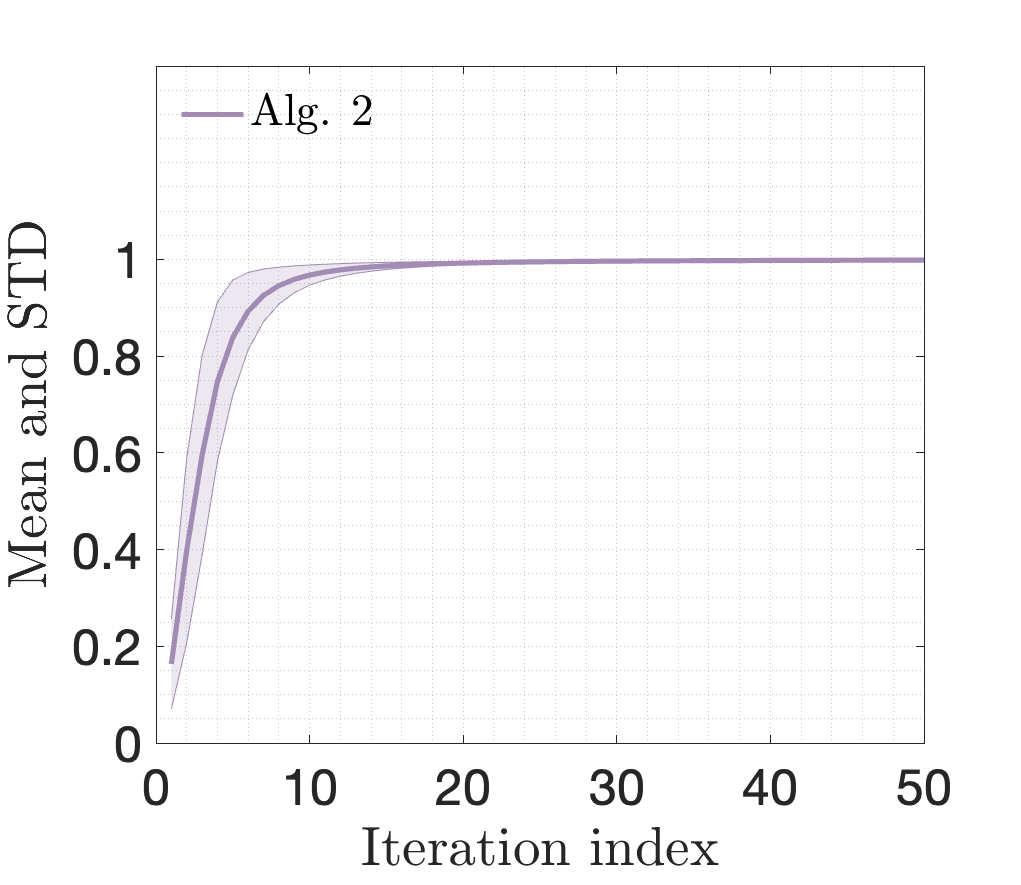}
				\caption{{\it Mean and standard deviation over twenty runs. The spectrum of $A$ is designed as 
						$\lambda_1=1,\ \lambda_j \in (-0.5,0.5),\ j=2,\ldots,N$, and $\mathbb{E}[T]/N=0.1$.}}\label{figg4}
			\end{figure*}
			
			Figures \ref{figg1}-\ref{figg2} plots the mean and standard deviation of twenty replications of the same experiment for the case where the spectrum of $\mathbf{A}$ is designed as $\lambda_1=1,\ \lambda_j \in (0.0,0.5),\ j=2,\ldots,N$. For the sake of compactness, we only consider the cases $\mathbb{E}[T]/N=0.1$ and $\mathbb{E}[T]/N=0.3$. 
			Similarly, Figures \ref{figg3}-\ref{figg4} plots the same quantities for the case where the spectrum of $\mathbf{A}$ is designed as $\lambda_1=1,\ \lambda_j \in (-0.5,0.5),\ j=2,\ldots,N$. For the sake of keeping this supplement short, we plot results only for a subset of the algorithms.
			
			\subsection{Monte Carlo-based partial power iteration}
			
			\begin{figure*}[ht]
				\centering
				\includegraphics[width=0.30\linewidth]{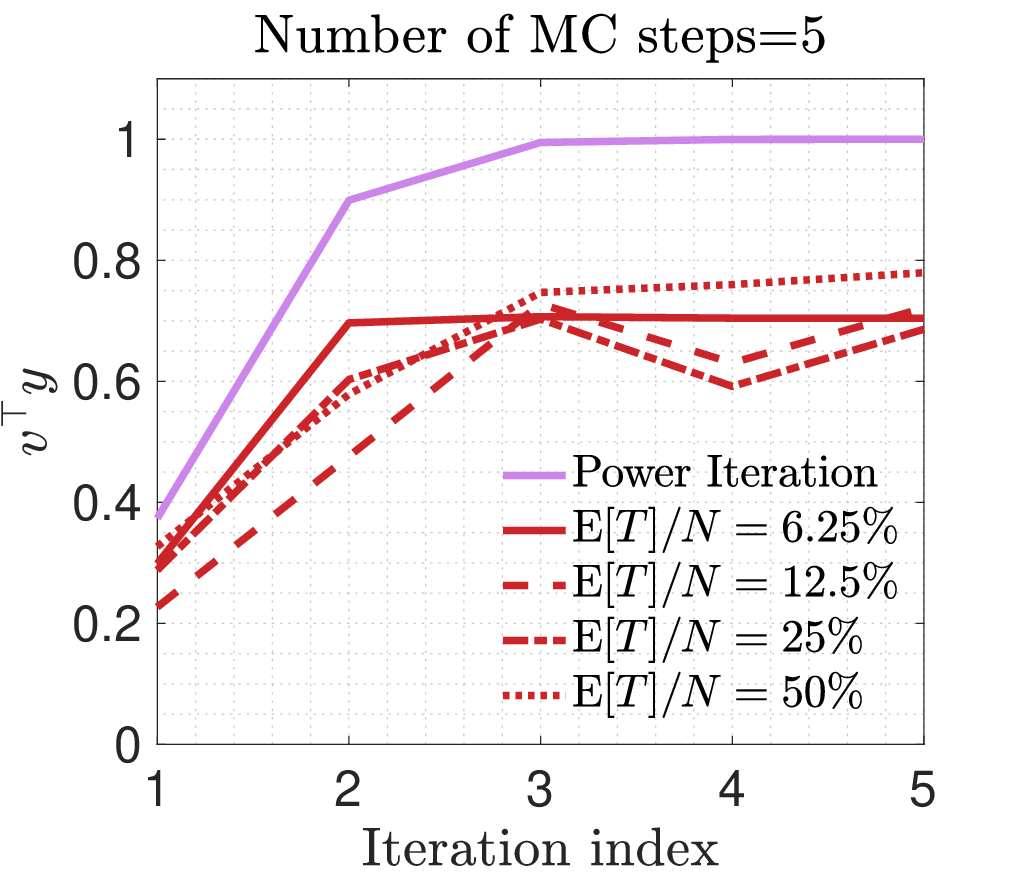}   
				\includegraphics[width=0.30\linewidth]{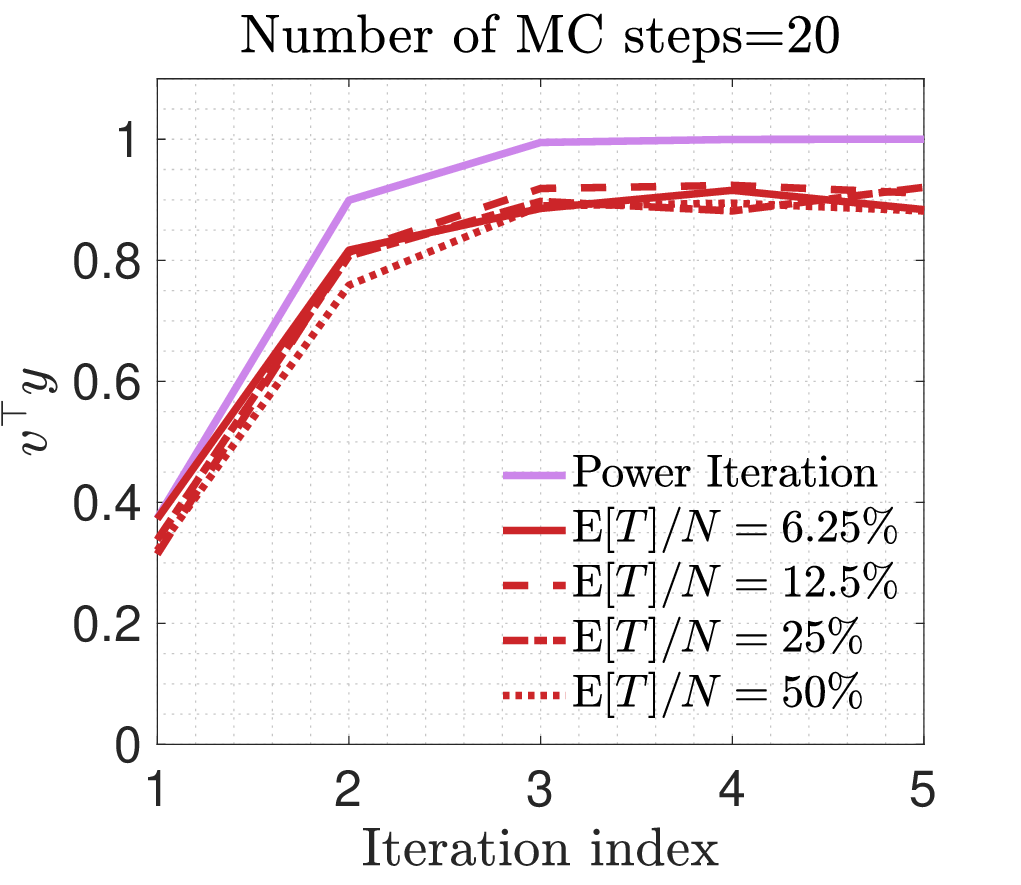}    
				\includegraphics[width=0.30\linewidth]{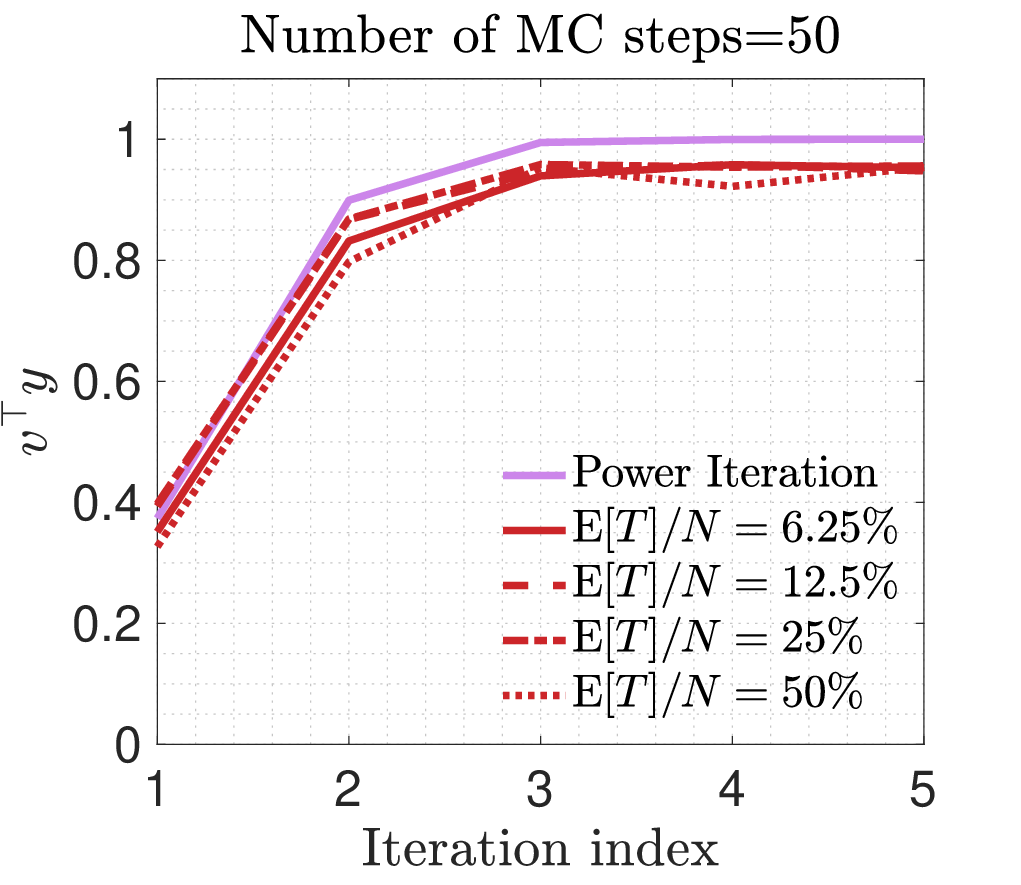} 
				\caption{{\it Monte-Carlo process for the inexact power iteration equipped 
						with the matrix-vector product dictated by (\ref{updzero1}).}}\label{fig13}
			\end{figure*}
			
			Earlier in this paper we mentioned that applying inexact power iteration 
			equipped with the matrix-vector product dictated by the matrix-vector model
			\begin{equation*}
				[\widehat{\bm{x}}_{k}]_i = 
				\begin{cases}
					\frac{N}{\mathbb{E}[T]}\sum_{j=1}^{j=N} \bm{A}_{ij}[\widehat{\bm{x}}_{k-1}]_j  &\ \mathrm{if}\ i\in {\mathcal T}_k\\
					0 &\ \mathrm{if}\ i\notin {\mathcal T}_k
				\end{cases}, 
			\end{equation*} 
			leads to a stagnation in the approximation accuracy of $v_1$. This phenomenon is explained by the fact that each $\widehat{\bm{x}}_k$ retains only $N-T_k$ non-zero entries. On the other hand, from Proposition \ref{pro1} we know that $\mathbb{E}[\widehat{\bm{x}}_k]$ is equal to $\mathbf{A}^k\widehat{\bm{x}}_0$. 
			Therefore, while the approximation $\widehat{\bm{x}}_k$ produced by inexact power iteration equipped 
			with the matrix-vector model in (\ref{updzero1}) can not converge to $\bm{v}_1$ regardless of the value of $k$, 
			the empirical mean over an infinite number of different $\widehat{\bm{x}}_k$ obtained with the same initial approximation $\widehat{\bm{x}}_0$ is guaranteed to converge to $\mathbf{A}^k\widehat{\bm{x}}_0$. 
			The expectation $\mathbb{E}[\widehat{\bm{x}}_k]$ 
			can be approximated via Monte-Carlo where we consider 
			the empirical mean $\frac{1}{L}\sum_{l=1}^L\widehat{\bm{x}}_k^{(l)},\ L\in \mathbb{N}$, with $\widehat{\bm{x}}_k^{(l)}$ denoting the approximation of inexact power iteration with 
			(\ref{updzero1}) after $k$ steps for the $l$-th trial. Figure \ref{fig13} plots the 
			inner product between $\frac{1}{L}\sum_{l=1}^L\widehat{\bm{x}}_k^{(l)}$ and $v_1$ as the 
			value of $L$ is varied as $L=5,\ L=20$, and $L=50$. For our illustration we use a smaller 
			matrix of size $N=128$ with eigenvalues set as $\lambda_1=1,\ \lambda_j \in (0.0,0.25),\ j=2,\ldots,N$. For the sake of completeness, we also vary 
			the ratio $\mathbb{E}[T]/N$. Following the above Monte-Carlo procedure, inexact power iteration equipped 
			with the matrix-vector model in (\ref{updzero1}) can now provide a much better approximation 
			of $\bm{v}_1$ if multiple independent runs with the same initial approximation are averaged. 
			As expected, increasing the value of $L$ allows for a better approximation of $\bm{v}_1$.

			\section{Covariance Calculations}
			\label{appendix_b} 
			
			\subsection{General Calculations}
			Here we provide more details for Section~\ref{sec:varr}. We can write
			\begin{align}
				\mathbf{S}& =\mathbf{Q}_{\mathcal{T}}\otimes \mathbf{Q}_{\mathcal{T}} \nonumber
				\\ &= (\mathbf{D}_{\mathcal{T}}\mathbf{A}+\mathbf{D}_{\mathcal{T}^c}\mathbf{I})\otimes (\mathbf{D}_{\mathcal{T}}\mathbf{A}+\mathbf{D}_{\mathcal{T}^c}\mathbf{I}) \nonumber
				\\ &= \underbrace{\mathbf{D}_{\mathcal{T}}\mathbf{A}\otimes \mathbf{D}_{\mathcal{T}}\mathbf{A}}_{I}+\underbrace{\mathbf{D}_{\mathcal{T}^c}\mathbf{I}\otimes \mathbf{D}_{\mathcal{T}}\mathbf{A}}_{II}+  \underbrace{\mathbf{D}_{\mathcal{T}}\mathbf{A}\otimes \mathbf{D}_{\mathcal{T}^c}\mathbf{I}}_{III}+ \underbrace{\mathbf{D}_{\mathcal{T}^c}\mathbf{I} \otimes \mathbf{D}_{\mathcal{T}^c}\mathbf{I}}_{IV}\,
				\tag{$I-IV$}\label{eq:I-IV}
			\end{align}
			with $\mathcal{T}^c$ denoting the complement of $\mathcal{T}$.
			We recall that $\mathbf{A}_i(=\mathbf{A}_{i:})$ represents the $i$-th row of $\mathbf{A}$, so that 
			$\mathbf{A}^\top=(\mathbf{A}_1^\top, \mathbf{A}^\top_2, \cdots,\mathbf{A}_N^\top)
			$. 
			Furthermore,
			\begin{align*}
				\mathbf{A}\otimes \mathbf{A} = \left(\begin{array}{c}
					\mathbf{B}_1 \\ \mathbf{B}_2\\ \vdots\\ \mathbf{B}_N\end{array}\right) 
				\quad\text{where}\quad \mathbf{B}_i = \left(\begin{array}{c}\mathbf{A}_i\otimes \mathbf{A}_1\\\mathbf{A}_i\otimes \mathbf{A}_2\\ \vdots \\ \mathbf{A}_i\otimes \mathbf{A}_N\end{array}\right).
			\end{align*}
			Through straightforward calculations, we have,  
			\begin{lem}\label{lem:ex AA}
				For any $i\neq j$, 
				\begin{align*}
					\ex[{\bf 1}_{i\in \mathcal{T}}\mathbf{A}_i\otimes {\bf 1}_{j\in \mathcal{T}}\mathbf{A}_j]&=  \pr[i,j\in \mathcal{T}]\, \mathbf{A}_i\otimes \mathbf{A}_j= \frac{\ex[T]}{N}\frac{\ex[T]-1}{N-1} \mathbf{A}_i\otimes \mathbf{A}_j,
					\\
					\ex[{\bf 1}_{i\notin \mathcal{T}}\, \bm{e}_i\otimes {\bf 1}_{j\in \mathcal{T}}\mathbf{A}_j]&=\pr[j\in \mathcal{T}, i\notin \mathcal{T}]\,\bm{e}_i\otimes \mathbf{A}_j
					= \frac{\ex[T]}{N}\left(1-\frac{\ex[T]-1}{N-1}\right) \bm{e}_i\otimes \mathbf{A}_j, 
					\\
					\ex[{\bf 1}_{i\in \mathcal{T}}\mathbf{A}_i\otimes {\bf 1}_{i\in \mathcal{T}}\mathbf{A}_i]&=  \pr[i\in \mathcal{T}] \,\mathbf{A}_i\otimes \mathbf{A}_i= \frac{\ex[T]}{N} \mathbf{A}_i\otimes \mathbf{A}_i.
				\end{align*}
				\begin{proof}[ ]
				\end{proof}
			\end{lem}

			We recall the matrices  $I'$ and $\mathbf{\Sigma}$ introduced before Section~\ref{subsec: CovUpdate1} in \eqref{eqn:Sigma Ip} and introduce three more $N^2\times N^2$ matrices:
			\begin{align*}
				I'=\left(\begin{array}{c} \mathbf{A}_1\otimes \mathbf{A}_1\\0\\ \vdots \\ 0\\ 0\\\mathbf{A}_2\otimes \mathbf{A}_2\\ \vdots \\ 0 \\ \vdots \\ 0\\0\\ \vdots \\ \mathbf{A}_N\otimes \mathbf{A}_N\end{array}\right),
				\quad
				II'\left(\begin{array}{c} 0\\\bm{e}_1\otimes \mathbf{A}_2\\ \vdots \\ \bm{e}_1\otimes \mathbf{A}_N\\[1mm] \bm{e}_2\otimes \mathbf{A}_1\\0\\ \vdots \\ \bm{e}_2\otimes \mathbf{A}_N \\ \vdots \\ \bm{e}_N\otimes \mathbf{A}_1\\\bm{e}_N\otimes \mathbf{A}_2\\ \vdots \\ 0\end{array}\right),
				\quad
				III'=\left(\begin{array}{c} 0\\\mathbf{A}_1\otimes \bm{e}_2\\ \vdots \\ \mathbf{A}_1\otimes \bm{e}_N\\[1mm] \mathbf{A}_2\otimes \bm{e}_1\\0\\ \vdots \\ \mathbf{A}_2\otimes \bm{e}_N \\ \vdots \\ \mathbf{A}_N\otimes \bm{e}_1\\\mathbf{A}_N\otimes \bm{e}_2\\ \vdots \\ 0\end{array}\right),\quad
				\mathbf{E}=\left(\begin{array}{c} \mathbf{e}_1\otimes \mathbf{e}_1\\0\\ \vdots \\ 0\\ 0\\\mathbf{e}_2\otimes \mathbf{e}_2\\ \vdots \\ 0 \\ \vdots \\ 0\\0\\ \vdots \\ \mathbf{e}_N\otimes \mathbf{e}_N\end{array}\right)\,.
			\end{align*}
			In the matrices $I'$ and $\mathbf{E}$ the only non-zero rows are those indexed by $k+(k-1)N$, $k=1,\dots, N$, where the entries are $\mathbf{A}_k\otimes \mathbf{A}_k$ and respectively $\mathbf{e}_k\otimes \mathbf{e}_k$, while in the matrices $II'$ and $III'$ these rows are the only zero ones. 
			Note that $\mathbf{\Sigma}=I'+II'+III'$. 
			
			We can write the expectations of the consecutive terms of  the sum~\eqref{eq:I-IV} in the following way.
			The expectation of the first term $(I)$ can be written as
			\begin{align*}
				\ex[\underbrace{\mathbf{D}_{\mathcal{T}}\mathbf{A}\otimes \mathbf{D}_{\mathcal{T}}\mathbf{A}}_{I}]= \frac{\ex[T]}{N}\frac{\ex[T]-1}{N-1} \mathbf{A}\otimes \mathbf{A} + \frac{\ex[T]}{N}\left(1-\frac{\ex[T]-1}{N-1}\right)
				I'\,.
			\end{align*}
			The expectation of the second term $(II)$ can be written as
			\begin{align*}
				\ex[\underbrace{\mathbf{D}_{\mathcal{T}^c}\mathbf{I}\otimes \mathbf{D}_{\mathcal{T}}\mathbf{A}}_{II}]= \frac{\ex[T]}{N}\left(1-\frac{\ex[T]-1}{N-1}\right)II',
			\end{align*}
			since $\ex[
			\mathbf{D}_{\mathcal{T}^c} \bm{e}_i\otimes \mathbf{D}_{\mathcal{T}}\mathbf{A}_i]=0$. The expectation of the  third term $(III)$ is equal to 
			\begin{align*}
				\ex[\underbrace{\mathbf{D}_{\mathcal{T}}\mathbf{A}\otimes \mathbf{D}_{\mathcal{T}^c}\mathbf{I}}_{III}] = \frac{\ex[T]}{N}\left(1-\frac{\ex[T]-1}{N-1}\right)III',
			\end{align*}
			since $\ex[\mathbf{D}_{\mathcal{T}^c} \mathbf{A}_i\otimes \mathbf{D}_{\mathcal{T}}\bm{e}_i]=0$. Note that $(III)$ is a permutational equivalent to $(II)$ and that the coefficients in front of $I'$, $II'$ and $III'$ are equal. 
			
			\noindent
			For the last term $(IV)$, similarly as with $(I)$,  with $\mathcal{T}^c$ in place of $\mathcal{T}$ and $\mathbf{I}$ in place of $\mathbf{A}$, we have 
			\begin{align*}
				\ex[\underbrace{\mathbf{D}_{\mathcal{T}^c}\mathbf{I} \otimes \mathbf{D}_{\mathcal{T}^c}\mathbf{I}}_{IV}]&=\frac{N-\ex[T]}{N}\frac{N-\ex[T]-1}{N-1}\mathbf{I}\otimes\mathbf{I}+\frac{N-\ex[T]}{N}\left(1-\frac{N-\ex[T]-1}{N-1}\right)\mathbf{E}   \\
				&=\left(1-\frac{\ex[T]}{N}\right)\left(\left(1-\frac{\ex[T]}{N-1}\right)\mathbf{I}\otimes\mathbf{I}
				+\frac{\ex[T]}{N-1}\mathbf{E}\right)\,.
			\end{align*} 
			Therefore,
			\begin{align*}
				\ex[\mathbf{S}]= &\frac{\ex[T]}{N}\frac{\ex[T]-1}{N-1} \mathbf{A}\otimes \mathbf{A} + \frac{\ex[T]}{N}\left(1-\frac{\ex[T]-1}{N-1}\right)\mathbf{\Sigma}
				+\left(1-\frac{\ex[T]}{N}\right) \left(\left(1-\frac{\ex[T]}{N-1}\right)\mathbf{I}\otimes\mathbf{I}
				+\frac{\ex[T]}{N-1}\mathbf{E}\right)
			\end{align*}
			Meanwhile, 
			\begin{align*}
				\ex[\mathbf{Q}_{\mathcal{T}}]\otimes \ex[\mathbf{Q}_{\mathcal{T}}]
				&= \left[\frac{\ex[T]}{N}\mathbf{A} + \left(1-\frac{\ex[T]}{N}\right)\mathbf{I}\right]\otimes \left[\frac{\ex[T]}{N}\mathbf{A} + \left(1-\frac{\ex[T]}{N}\right)\mathbf{I}\right]
				\\ &=  \left(\frac{\ex[T]}{N}\right)^2 \mathbf{A}\otimes \mathbf{A} + \frac{\ex[T]}{N}\left(1-\frac{\ex[T]}{N}\right) (\mathbf{I}\otimes \mathbf{A}+\mathbf{A}\otimes \mathbf{I}) + \left(1-\frac{\ex[T]}{N}\right)^2 \mathbf{I}\otimes\mathbf{I}.
			\end{align*}
			Thus, $\cov(\widehat{\bm{x}}_1)$ has the form
			\begin{align*}
				\cov(\widehat{\bm{x}}_1) 
				=&
				\frac{\ex[T]}{N}\frac{\ex[T]-N}{N(N-1)} \mathbf{A}\otimes \mathbf{A} + \frac{\ex[T]}{N}\left(1-\frac{\ex[T]-1}{N-1}\right)\mathbf{\Sigma}
				-\frac{\ex[T]}{N}\left(1-\frac{\ex[T]}{N}\right) (\mathbf{I}\otimes \mathbf{A}+\mathbf{A}\otimes \mathbf{I})
				\\
				&+\left(1-\frac{\ex[T]}{N}\right)\left(1-\frac{\ex[T]}{N-1}-1+\frac{\ex[T]}{N}\right)\mathbf{I}\otimes \mathbf{I} +\frac{\ex[T]}{N}\left(1-\frac{\ex[T]-1}{N-1}\right)\mathbf{E}
				\\
				=&
				\frac{\ex[T]}{N}\left(1-\frac{\ex[T]}{N}\right)\left(\frac{-1}{N-1}\mathbf{A}\otimes \mathbf{A}- (\mathbf{I}\otimes \mathbf{A}+\mathbf{A}\otimes \mathbf{I})+
				\left(1-\frac{1}{N-1}\right)\mathbf{I}\otimes \mathbf{I}
				+ \left(1+\frac{1}{N-1}\right)(\mathbf{\Sigma}+\mathbf{E})\right)
				\\
				=&
				\frac{\ex[T]}{N}\left(1-\frac{\ex[T]}{N}\right)
				\left(\frac{1}{N-1}\left(\mathbf{\Sigma}-\mathbf{A}\otimes \mathbf{A}+\mathbf{\Sigma}+\mathbf{E}-\mathbf{I}\otimes\mathbf{I}\right)+ \mathbf{\Sigma}+\mathbf{E} +
				\mathbf{I}\otimes\mathbf{I}-(\mathbf{I}\otimes  \mathbf{A}+\mathbf{A}\otimes \mathbf{I})
				\right)
				\,.
			\end{align*}
			If we fix $\ex[T]/N$, 
			the proportion of rows updated at each iteration, for large $N$ we get,
				\begin{align*}
					\cov(\widehat{\bm{x}}_1) =  \frac{\ex[T]}{N}\left(1-\frac{\ex[T]}{N}\right)\left( I'-II''-III'+\mathbf{E}\right) 
					+O\left(\frac{1}{N}\right)\,.
				\end{align*}
			The covariance will be “largest'' when $\ex[T]/N$ is around $\frac12$.

		\end{document}